\newcommand{\be}{\begin{equation}}
\newcommand{\ee}{\end{equation}}
\newcommand{\ba}{\begin{aligned}}
\newcommand{\ea}{\end{aligned}}
\newcommand{\cC}{{\mathcal C}}
\newcommand{\bbR}{{\mathbb R}}
\def\ccm{Center for Computational Mathematics, Flatiron Institute, Simons Foundation,
  New York, New York 10010}
\def\nyu{Courant Institute of Mathematical Sciences,
  New York University, New York, New York 10012}
\def\papertitle{An accurate and efficient scheme for function extensions on smooth domains}
\title{\papertitle}
\author{Charles L.~Epstein%
  \thanks{\ccm\,
    ({\tt cepstein@flatironinstitute.org}).}
  \and
  Fredrik~Fryklund\thanks{\nyu\,
  ({\tt nf2235@nyu.edu}).}
  \and
  Shidong~Jiang\thanks{\ccm\,
    ({\tt sjiang@flatironinstitute.org}).}
}
\begin{document}

\maketitle

\begin{abstract}
  A new scheme is proposed to construct an $n$-times differentiable function extension of an $n$-times
  differentiable function defined on a smooth domain, $D,$ in $d$-dimensions. The extension scheme
  relies on an explicit formula consisting of a linear combination of $n+1$
  function values in $D,$ which extends the function along directions normal to the boundary.
  Smoothness tangent to the boundary is automatic. The performance of the scheme is illustrated by
  using function extension as a step in a numerical solver for the inhomogeneous Poisson equation
  on multiply connected domains with {\em complex geometry} in two and three dimensions. We show that
  the modest additional work needed to do function extension leads to considerably more accurate
  solutions of the partial differential equation.
\end{abstract} 

\begin{keywords}
  function extension,
  Vandermonde matrix,
  Chebyshev polynomial,
  complex geometry,
  smooth domain,
  elliptic partial differential equations
\end{keywords}

\begin{AMS}
31B10, 65N80, 65T99
\end{AMS}

\pagestyle{myheadings}
\thispagestyle{plain}
\markboth{C.~Epstein, F.~Fryklund, and S.~Jiang}
{An accurate and efficient scheme for function extensions on smooth domains}
\section{Introduction}\label{intro}
In a classic paper by Robert Seeley~\cite{seeley1964pams}, a simple formula is given
for the $\cC^\infty$ extension of a $\cC^\infty$ function defined in a half space. To be more precise,
let $y\in \mathbb{R}^{d-1}$, $x\in\mathbb{R}^1$, $\bbR^d_+=\mathbb{R}^{d-1}\times \{x\geq 0\}$,
then $\cC^{\infty}(\bbR^d_+)$ consists of infinitely differentiable functions defined in $\bbR^{d-1}\times (0,\infty)$
whose derivatives have continuous limits as  $x\rightarrow 0^+$.
For $x<0$, define the extended function
\be\label{cinfinityext}
E_{\infty}[f](y,x) = \sum_{j=0}^\infty w_j f(y, -t_j x) \phi(-t_j x), 
\ee
where $\{t_j\}\subset (0,\infty)$ is an unbounded, strictly increasing sequence, and $\phi$ is a $\cC^\infty$
window function on $\mathbb{R}^1$ with $\phi(x)=1$ for $0\le x\le 1$,
and $\phi(x)=0$ for $x\ge 2$.
For any $x<0,$ the sum in~\cref{cinfinityext} is finite and therefore $E_{\infty}[f]$ has the same smoothness
as the original function $f.$  In order to ensure the smoothness of the extension across the boundary, one needs
to require that the values of $E_{\infty}[f]$ and $f$  and all of their derivatives  match at $x=0$. This leads
to the following infinite system of linear equations for $\{w_j\}$
\be
\sum_{j=0}^\infty w_j t_j^i=(-1)^i, \qquad i=0, 1, \cdots,
\ee
i.e., the weight vector $w$ is the solution to an infinite Vandermonde system when
the node vector $t=(t_0,t_1,\dots)$ is given. Using $t_j=2^j$, Seeley showed that
$\sum_{j=0}^\infty |w_j| |t_j|^i < \infty$ for all $i\ge 0$. Thus, the extension operator $E_{\infty}$ 
is actually a continuous linear operator from $C^\infty(\mathbb{R}^d_+)$ to $C^\infty(\mathbb{R}^d)$.

Here we examine the extension formula \cref{cinfinityext} from the perspective of numerical
computation. Function extensions have broad applications in numerical analysis and scientific
computing, especially to boundary value problems for partial differential equations where
the boundary does not have simple geometry. Most existing numerical schemes rely on a data
fitting procedure using a choice of basis functions such as Fourier series, polynomials, radial
basis functions, etc. (see, for example,
\cite{askham2016cims,bruno2022sisc,demanet2019fcm,fryklund2022fmm,fryklund2018jcp,huybrechs2010sinum,STEIN2017155,webb2012sisc}).
The data fitting procedure is usually carried out on a volume grid to ensure the
smoothness of the extended function in all variables, which is expensive for problems in two and
higher dimensions. In~\cite{bruno2022sisc} a method is introduced that extends along the normal
direction using Fourier continuation.  There is also the \emph{active penalty method} \cite{activepenalty},
where an extension is created by using a set of basis functions that give global regularity $C^{k}$, once boundary
data and derivatives of order $k$ have been matched at the boundary. Observe that all of these schemes depend on
the original data linearly, regardless of the basis functions chosen to fit the data; that is, the extension
operator is a linear operator. 

As opposed to function extension, there is the method of \emph{function intension} \cite{STEIN2022111594}. The source
density is represented on a uniform Cartesian grid in the interior of the given computational domain, combined
with a conforming mesh in a tubular neighborhood of the boundary. The Poisson equation is solved in each region,
followed by enforcing consistency of the solution, across the interface of the interior of the domain and the
tubular neighborhood, by solving a boundary integral equation. Another method that relies on volumetric domain
decomposition is \cite{ANDERSON2023111688}, which is compatible with standard meshing software.

For applications to numerical analysis it is more appropriate to consider the finitely
differentiable analogue of $E_{\infty},$ given by
\be\label{cnext}
E_{n}[f](y,x) = \sum_{j=0}^n w_j f(y, -t_j x) \phi(-t_j x), 
\ee
where now we have the finite Vandermonde system:
\be
\sum_{j=0}^n w_j t_j^i=(-1)^i, \qquad i=0, 1, \cdots,n.
\ee
A nice feature of the extension formul{\ae} \cref{cinfinityext} and~\cref{cnext} is that the
extension acts along one direction only, while the smoothness in the other directions is
automatically guaranteed by construction. This feature can be extended to domains
$D\subset\bbR^d,$ with $\partial D$ a smooth embedded hypersurface. For $y\in\partial D,$
let $\nu_y$ denote the outer unit normal vector to $\partial D$ at $y.$ The tubular neighborhood
theorem, see~\cite{BottTu}, ensures that for some $\epsilon>0$ the map from
$\partial D\times (-\epsilon,\epsilon)$ given by
\be
(y,x)\mapsto y+x \nu_y
\ee
is a diffeomorphism onto its image.
For such a domain $D\subset\mathbb{R}^d$, we can extend the definition of $E_n$ by setting
\be\label{extformula}
E_n[f](y + x \nu_y)=\sum_{j=0}^n w_j f(y -t_j x \nu_y),
\ee
where $y$ is a point on $\partial D$. 
The window function is dropped for now, but will be needed in some applications. 

Once again, $E_n[f]$ has the same order of smoothness as $f$ in $\overline{D}^c$
and along the tangential directions on $\partial D$ by construction. Thus, one only needs to ensure
the continuity of the function and its normal derivatives up to order $n$
on $\partial D$ in order for the extension to be in $\cC^n$ in a small neighborhood of $D$.
This leads to the following $(n+1)\times (n+1)$ Vandermonde linear system for the nodes and weights
\be\label{vsystem}
\begin{bmatrix} 1&1&\cdots&1\\
  t_0&t_1&\cdots&t_n\\
  \vdots&\vdots&\cdots&\vdots\\
  t_0^n&t_2^n&\cdots&t_n^n
\end{bmatrix}\begin{bmatrix} w_0\\w_1\\\vdots\\w_n\end{bmatrix}
  =\begin{bmatrix} 1\\-1\\\vdots\\(-1)^n\end{bmatrix},
\ee
which we write as $Aw=c$ for short.  In general such systems are very poorly conditioned,
but, as we show in \cref{sec2}, the solution to this equation can be found explicitly,
without recourse to numerical linear algebra. 

Another very nice feature of this approach is that the nodes and weights are independent of the
domain, $D.$ By the nature of the extension operator, we need to have
$t_j\ge 0$ for $j=0, \ldots, n$ and without loss of generality, we assume that they are arranged
in increasing order, i.e., $0\le t_0<t_1<\cdots<t_n$. The extension $E_n[f]$ is then obtained by
simply evaluating the sums in~\cref{extformula}. While this does not require the solution of any
equations, it generally requires $f$ to be interpolated to the values of the arguments
$\{y-t_j x\nu_y:\: j=0,\dots,n\}$ that appear in the sum.

The paper is organized as follows. In \cref{sec2}, we present 
the main theoretical result. \Cref{sec3} discusses certain 
numerical issues associated with the application of our
function extension scheme to solving elliptic partial 
differential equations. Numerical results in one, two, and 
three dimensions are shown in \cref{sec4}.

\section{Main theoretical result}\label{sec2}
The main theoretical result of this paper is \cref{thm2},
which gives the optimal nodes and weights of the function extension formula \cref{extformula}.

\subsection{Solution to the Vandermonde system}\label{sec2.1}
It is well known that Vandermonde matrices are ill-conditioned (as is familiar in the problem of
function extrapolation). In \cite{gautschi1987nm}, it is shown that the condition number
(in the maximum norm) of the Vandermonde matrix is bounded from below by $(n+1)2^{n}$ when all
nodes are non-negative.  If the weights are calculated by solving \cref{vsystem} numerically,
then the condition number of the Vandermonde matrix will affect the accuracy of the weights.
However, \cref{vsystem} can be solved analytically by an elementary method, which we now present.
Denote by $B$ the inverse of the matrix  $A$. We then have
\be\label{vinverse}
\sum_{j=0}^n b_{ij}t_k^j=\delta_{ik}.
\ee
Consider the polynomial $p_i(x)=\sum_{j=0}^n b_{ij}x^j$. Then \cref{vinverse} is equivalent to
\be\label{lagrange}
p_i(t_k)=\delta_{ik}.
\ee
Recall that the  Lagrange basis functions $\{l_i(x)\}$ for interpolation through the nodes
$\{t_k, k=0,\cdots,n\}$  are given by
\be\label{lagrange2}
l_i(x)=\prod_{m=0, m\ne i}^n \frac{x-t_m}{t_i-t_m},
\ee
and the interpolant is then
\be\label{laginterp}
Q(x)=\sum_{i=0}^n f_i l_i(x).
\ee
It is straightforward to check that $l_i$ satisfies the condition \cref{lagrange}.
Thus,
\be
p_i(x)=l_i(x),
\ee
and
\be\label{weightformula}
\ba
w_i&=\sum_{j=0}^n b_{ij}(-1)^j=p_i(-1)=l_i(-1)\\
&=(-1)^i
\prod_{m=0}^{i-1} \frac{1+t_m}{t_i-t_m} \cdot \prod_{m=i+1}^{n} \frac{1+t_m}{t_m-t_i}.
\ea
\ee
That is, $w_i$ is the value of the $i$th Lagrange basis function evaluated at $-1$.

\subsection{Choice of optimal interpolation nodes}
To fix our discussion, let us now
assume that the $\cC^n$ function, $f,$ is defined in the interval $[0,a],$ and we extend $f$
to $[-1,0],$ which means that the nodes $\{t_i\}\subset [0,a]$ as well.
$E_n$ extends functions in $\cC^n([0,a])$ to functions in $\cC^n([-1,a]);$ we see that
\begin{equation}
    \|E_n(f)\|_{L^{\infty}([-1,0])}\leq \|w\|_1 \|f\|_{L^{\infty}([0,a])},
\end{equation}
where \be\label{onenorm} \|w\|_1=\sum_{i=0}^n |w_i|.  \ee Hence a good measure
of the ``quality" of the extension operator given by \cref{extformula} is the
$\ell^1$-norm of the weights $\{w_j\}.$ This norm, $\|w\|_1,$ is determined by
the interpolation nodes $\{t_i\}$ explicitly via the formula
\cref{weightformula}. We would therefore like to choose the interpolation nodes
$t_i\in [0,a]$ such that $\|w\|_1$ is minimized. While we are not explicitly
controlling the norms of the derivatives $\|\partial_x^l
E_n[f]\|_{L^{\infty}}([-1,0]),$ for $l=1,\dots,n,$ below we see that, for the
nodes and weights that minimize $\|w\|_1,$ these norms behave like
$a^l\|w\|_1\|\partial_x^lf\|_{L^{\infty}([0,a])}.$ In applications $a\approx 2, n\leq 10,$
so this is a very moderate rate of growth.

Combining \cref{laginterp} and \cref{weightformula}, we obtain
\be\label{onenorm2}
\ba
\|w\|_1&=\sum_{i=0}^n |w_i|\\
&=\sum_{i=0}^n(-1)^il_i(-1)\\
&=P(-1),
\ea
\ee
where $P(x)=\sum_{i=0}^n(-1)^il_i(x)$.
\begin{definition}
  A polynomial $P$ is said to satisfy the {\it equi-oscillation property of modulus $1$ on $[0,a]$}
  if there exist $t_i$ with $0\le t_0<t_1<\cdots<t_n\le a$
  such that $P(t_i)=(-1)^i$ for $i=0,\ldots,n$. 
\end{definition}

\begin{lemma}\label{lem1}
  If $P(x)$ is a polynomial of degree $n,$ satisfying the equi-oscillation property of modulus $1$,
  then its $n$ roots $x_i$ ($i=1,\cdots,n$) satisfy
  the property $t_0<x_1<t_1<x_2<t_2<\cdots<x_n<t_n$.
\end{lemma}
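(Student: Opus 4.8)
The plan is to combine the intermediate value theorem with the fundamental theorem of algebra; no machinery beyond that is needed. First I would unpack the equioscillation hypothesis: it supplies points $t_0<t_1<\cdots<t_n$ at which $P$ takes the alternating values $P(t_i)=(-1)^i$, so for each $i=1,\ldots,n$ the numbers $P(t_{i-1})$ and $P(t_i)$ have opposite signs. Since $P$ is continuous, the intermediate value theorem yields at least one root $x_i$ in each open interval $(t_{i-1},t_i)$. Note that the root is strictly interior, since $P(t_j)=\pm 1\ne 0$ rules out any $t_j$ being a root. The $n$ intervals $(t_{i-1},t_i)$ are pairwise disjoint, so the $x_i$ produced this way are distinct and already satisfy the chain $t_0<x_1<t_1<x_2<t_2<\cdots<x_n<t_n$.

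Next I would invoke that $P$ has degree exactly $n$, hence at most $n$ roots in $\mathbb{C}$, counted with multiplicity. Having exhibited $n$ distinct real roots, we conclude that these are precisely all the roots of $P$, each is simple, and the interlacing pattern above is the complete description of the root set. In particular the $x_i$ in the statement are unambiguously defined (there is nothing else they could be), which closes the argument.

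I do not expect a genuine obstacle here; the only point that needs a word of care is the bookkeeping that no root is missed or double-counted. That is handled entirely by the degree bound: the $n$ sign changes forced by the alternation $+1,-1,+1,\ldots$ already consume the full degree of $P$, leaving no room for an additional root or for any $x_i$ to be a repeated root. If one wished, this same observation also shows each $x_i$ is a simple zero, so $P'(x_i)\ne 0$, a fact that may be convenient later when analyzing the extremal problem for $\|w\|_1$.
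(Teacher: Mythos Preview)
Your argument is correct and is essentially the same as the paper's, which simply invokes the intermediate value theorem together with the alternating signs of $P$ at the $t_i$. You have merely spelled out the bookkeeping (disjointness of the intervals, the degree bound forcing these to be all the roots) that the paper leaves implicit.
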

\begin{proof}
  This follows from the intermediate value theorem and
  the fact that  the signs of the values $\{P(t_0), P(t_1),\dots, P(t_n)\}$ alternate.
\end{proof}

\begin{lemma}\label{lem2}
  If $P(x)$ is a polynomial of degree $n,$ satisfying the equi-oscillation property of modulus $1$,
  then $P(x)>0$ decreases monotonically for $x<t_0$.
\end{lemma}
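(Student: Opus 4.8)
The plan is to work from the factorization of $P$ over its roots. By Lemma~\ref{lem1}, $P$ has $n$ distinct real roots $x_1<x_2<\cdots<x_n$ lying in the open interval $(t_0,t_n)$; since $\deg P=n$, these are all of its zeros, so we may write $P(x)=C\prod_{i=1}^n(x-x_i)$ for some nonzero constant $C$. The first step is to pin down the sign of the leading coefficient $C$: evaluating at $x=t_0$, where $P(t_0)=1>0$, and noting that $t_0<x_1<\cdots<x_n$ makes every factor $t_0-x_i$ negative, gives $1=C\,(-1)^n\prod_{i=1}^n\lvert t_0-x_i\rvert$, so that $\operatorname{sign}(C)=(-1)^n$.

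Next I would establish positivity. For any $x<t_0$ we still have $x<x_1<\cdots<x_n$, so $\prod_{i=1}^n(x-x_i)$ has sign $(-1)^n$, and hence $P(x)=C\prod_{i=1}^n(x-x_i)>0$; indeed this argument shows $P>0$ on all of $(-\infty,x_1)$, which contains $(-\infty,t_0]$ since $t_0<x_1$. For the monotonicity I would pass to the logarithmic derivative: on $(-\infty,x_1)$, where $P$ does not vanish,
\be
\frac{P'(x)}{P(x)}=\sum_{k=1}^n\frac{1}{x-x_k},
\ee
and every summand is negative because $x<x_1\le x_k$. Thus $P'(x)/P(x)<0$ there, and combining this with $P(x)>0$ yields $P'(x)<0$ for all $x<t_0$, so $P$ is strictly decreasing on $(-\infty,t_0]$.

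I do not expect a genuine obstacle here: the only points needing a little care are invoking Lemma~\ref{lem1} to guarantee that $P$ really has $n$ real roots (so the factorization is legitimate and there are no stray complex zeros), and the bookkeeping of the sign $(-1)^n$ for $C$, which then cancels the sign $(-1)^n$ of the product on $(-\infty,t_0)$ to leave $P>0$.
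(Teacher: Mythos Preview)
Your proof is correct and follows essentially the same approach as the paper: factor $P$ over the $n$ real roots supplied by Lemma~\ref{lem1}, determine the sign of the leading coefficient from $P(t_0)=1$, and read off positivity and monotonicity on $(-\infty,t_0)$. The paper's version is terser---it writes $P(x)=c\prod_{i=1}^n(x_i-x)$ with $c>0$, which sidesteps your $(-1)^n$ bookkeeping---and leaves the monotonicity implicit in the factorization rather than passing through the logarithmic derivative, but these are cosmetic differences.
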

\begin{proof}
  This follows from \cref{lem1} and the fact that $P(t_0)=1$. That is,
  $P(x)=c\prod_{i=1}^n (x_i-x)$ for some $c>0$.
\end{proof}

It is easy to see that the choice of optimal interpolation nodes
for the purpose of function extension is equivalent to the following problem:\\
{\it Find the minimal value of $P(-1)$ among all polynomials of degree at most $n$ that
  satisfy the equi-oscillation property of modulus $1$ on $[0,a]$.}

Denote the minimal value by $m_a$, the corresponding polynomial by $p^\ast(x)$, 
and the associated nodes by $\{t_j^\ast\}$, respectively. In the following lemmas we give the
properties of the optimal nodes and $p^\ast$ itself.

\begin{lemma}\label{lem3}
  $t_0^\ast=0$.
\end{lemma}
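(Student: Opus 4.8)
The plan is to prove $t_0^\ast=0$ by contradiction: suppose the optimal polynomial $p^\ast$ has its smallest equioscillation node $t_0^\ast>0$, and construct from it a new polynomial, still satisfying the equioscillation property of modulus $1$ on $[0,a]$, whose value at $-1$ is strictly smaller than $m_a=p^\ast(-1)$. The natural candidate is a rescaling or a shift of $p^\ast$ that slides the lowest node leftward to the origin while keeping the picture of alternating values $\pm 1$ intact.

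Concretely, I would consider the linear change of variable $x\mapsto \sigma(x)$ that maps $[t_0^\ast,a]$ onto $[0,a]$, or more simply the rescaling $q(x)=p^\ast(\lambda x)$ with $\lambda = a/(a-t_0^\ast)>1$ chosen so that the node nearest to $t_0^\ast$ gets dragged appropriately; one must be a little careful that all nodes stay in $[0,a]$. A cleaner route: define $q(x) = p^\ast\!\big(t_0^\ast + (1 - t_0^\ast/a)\,(x+1)\big)$ — a linear map sending $-1\mapsto t_0^\ast$... actually the simplest robust choice is to compose $p^\ast$ with the affine map taking $[0,a]$ to $[t_0^\ast,a]$ is the wrong direction; instead take the affine map $T$ sending $[-1,a]$ to $[-1,a]$ linearly? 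No. Let me state the mechanism I will actually use: I will use the fact (Lemma~\ref{lem2}) that $p^\ast$ is positive and strictly decreasing on $(-\infty,t_0^\ast)$, so if we can produce a competitor polynomial whose lowest node is strictly less than $t_0^\ast$ but which still equioscillates on $[0,a]$, then evaluating at $-1$ — which lies to the left of both lowest nodes — the competitor, being a ``stretched'' version reaching value $1$ further to the left, will have a smaller value at $-1$. The honest construction is: let $s\in(0,t_0^\ast)$ and rescale $p^\ast$ via $q_s(x) = p^\ast\!\big(\alpha(x - s)\big)$ with $\alpha = (a - t_0^\ast)/(a - s)<1$... this sends $s\mapsto 0$... hmm, that maps the node $t_0^\ast$ to $s\cdot$something. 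I will instead present the contraction $q(x)=p^\ast\big(\beta x\big)$ for suitable $\beta>1$: this pulls every positive node toward $0$, in particular $t_0^\ast/\beta$ becomes the new lowest node, still positive, still in $[0,a]$, and the values at the scaled nodes are unchanged, so $q$ equioscillates; but then $q(-1)=p^\ast(-\beta)<p^\ast(-1)=m_a$ by monotonicity (Lemma~\ref{lem2}), contradicting minimality. The only subtlety is ensuring the largest scaled node $t_n^\ast/\beta$ still lies in $[0,a]$, which is automatic since $\beta>1$ shrinks it.

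The key steps, in order: (1) assume for contradiction $t_0^\ast>0$ and set $\beta = a/t_n^\ast$ if $t_n^\ast<a$ — or, if $t_n^\ast=a$ already, first argue one can perturb to assume $t_n^\ast<a$, or better, directly use $\beta$ slightly larger than $1$; (2) verify that $q(x)=p^\ast(\beta x)$ is a polynomial of degree $\le n$ with equioscillation nodes $t_i^\ast/\beta\in[0,a]$ satisfying $q(t_i^\ast/\beta)=(-1)^i$, hence $q$ satisfies the equioscillation property of modulus $1$ on $[0,a]$; (3) invoke Lemma~\ref{lem2}: since $-\beta<-1<t_0^\ast$ lies in the region where $p^\ast$ is positive and strictly decreasing, $q(-1)=p^\ast(-\beta)<p^\ast(-1)=m_a$; (4) this contradicts the minimality of $m_a$, so $t_0^\ast=0$.

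The main obstacle I anticipate is the boundary case where the optimal configuration already has $t_n^\ast=a$, so that no genuine room exists to the right for the rescaling $x\mapsto\beta x$ with $\beta>1$; one must then argue either that the optimizer cannot have both $t_0^\ast>0$ and $t_n^\ast=a$ (perhaps using a shift-and-rescale that simultaneously opens space at the top while closing the gap at the bottom, i.e. the affine map taking $[t_0^\ast,a]$ back onto $[0,a]$ expansively), or establish existence/compactness of the optimizer first and handle the extremal case by a limiting argument. A secondary technical point is confirming that $q$ genuinely has the \emph{equioscillation} structure required by the definition — namely that its values at the (ordered) scaled nodes are exactly $(-1)^i$ — which is immediate since composition with an increasing linear map preserves the ordering of the nodes and the function values at those nodes.
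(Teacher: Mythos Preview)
Your argument has a sign error in step (3) that breaks the contradiction. With $\beta>1$ you have $-\beta<-1$, and since by Lemma~\ref{lem2} the polynomial $p^\ast$ is strictly \emph{decreasing} on $(-\infty,t_0^\ast)$, moving the argument further left \emph{increases} the value: $q(-1)=p^\ast(-\beta)>p^\ast(-1)=m_a$. So the competitor $q$ you build has a larger, not smaller, value at $-1$, and no contradiction arises. Geometrically, the rescaling $x\mapsto\beta x$ with $\beta>1$ compresses the positive nodes toward $0$, but it simultaneously stretches the negative axis, pushing the evaluation point $-1$ further into the growth region of $p^\ast$. (Your ``only subtlety'' about $t_n^\ast/\beta\le a$ is indeed a non-issue for $\beta>1$, but that is cold comfort since the inequality at $-1$ goes the wrong way.)

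The paper instead uses a translation: set $q(x)=p^\ast(x+t_0^\ast)$. The nodes become $t_i^\ast-t_0^\ast\in[0,a-t_0^\ast]\subset[0,a]$ with $q(t_i^\ast-t_0^\ast)=(-1)^i$, so $q$ still equioscillates on $[0,a]$; and $q(-1)=p^\ast(-1+t_0^\ast)$. Since $-1<-1+t_0^\ast<t_0^\ast$ and $p^\ast$ is decreasing there, $p^\ast(-1+t_0^\ast)<p^\ast(-1)$, yielding the contradiction. This shift needs no hypothesis on $t_n^\ast$, so the boundary case you worried about never arises. You actually brushed past the affine/shift idea in your exploration before settling on the rescaling; the shift was the right instinct.
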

\begin{proof}
  Suppose $t_0^\ast>0$. Then $q(x)=p^\ast(x+t_0^\ast)$ satisfies the alternating property
  $q(t_i^\ast-t_0^\ast)=p^\ast(t_i^\ast)=(-1)^i$. Furthermore, $q(-1)=p^\ast(-1+t_0^\ast)<p^\ast(-1)$
  by \cref{lem2}, which leads to a contradiction.
\end{proof}

\begin{lemma}\label{lem4}
  If $n>0$, then $t_n^\ast=a$.
\end{lemma}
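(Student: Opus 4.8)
The plan is to argue by contradiction using a dilation, in the same spirit as the translation used in the proof of Lemma~\ref{lem3}. Suppose $t_n^\ast < a$, and set $\mu = a/t_n^\ast$, so that $\mu > 1$. Define the rescaled polynomial $q(x) = p^\ast(x/\mu)$. It has degree at most $n$, and evaluating at $\mu t_i^\ast$ gives $q(\mu t_i^\ast) = p^\ast(t_i^\ast) = (-1)^i$ for $i = 0,\ldots,n$. Since $0 \le \mu t_0^\ast < \mu t_1^\ast < \cdots < \mu t_n^\ast = a$, these $n+1$ abscissae lie in $[0,a]$, so $q$ satisfies the equioscillation property of modulus $1$ on $[0,a]$; that is, $q$ is an admissible competitor in the minimization problem defining $m_a$.

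It then remains to compare $q(-1)$ with $p^\ast(-1) = m_a$. Since $\mu > 1$ we have $-1 < -1/\mu < 0$, and by Lemma~\ref{lem3} the left endpoint of the node set is $t_0^\ast = 0$, so $-1/\mu$ lies strictly to the left of $t_0^\ast$. By Lemma~\ref{lem2}, $p^\ast$ is positive and strictly decreasing on $(-\infty, t_0^\ast)$ --- concretely $p^\ast(x) = c\prod_{i=1}^n (x_i - x)$ with $c > 0$ and all roots $x_i > 0$, a product of positive, strictly decreasing factors for $x < 0$. Hence $q(-1) = p^\ast(-1/\mu) < p^\ast(-1) = m_a$, contradicting the minimality of $m_a$. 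Therefore $t_n^\ast = a$. This also shows where the hypothesis $n > 0$ enters: when $n = 0$ one has $p^\ast \equiv 1$, which is not strictly decreasing, so no contradiction arises --- consistent with the fact that the location of the single node is then irrelevant.

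I do not expect a genuine obstacle in this argument; the two points that need care are (i) verifying that the dilated polynomial $q$ is still admissible, i.e.\ that it has degree at most $n$ and that its equioscillation abscissae $\mu t_i^\ast$ remain ordered and contained in $[0,a]$, which is immediate once $\mu$ is chosen so that $\mu t_n^\ast = a$; and (ii) applying the strict monotonicity of $p^\ast$ from Lemma~\ref{lem2} on the correct half-line, for which having already located $t_0^\ast = 0$ via Lemma~\ref{lem3} is convenient (though not strictly necessary, since $-1/\mu < 0 \le t_0^\ast$ in any case).
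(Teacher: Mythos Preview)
Your proof is correct and takes a genuinely different route from the paper's. The paper argues by replacing only the last node $t_n^\ast$ with $a$ (keeping the remaining nodes fixed) and then shows directly from the explicit weight formula~\eqref{weightformula} that each $|w_i|$ strictly decreases as $t_n$ increases: for $i<n$ the relevant factor is $\frac{1+t_n}{t_n-t_i}=1+\frac{1+t_i}{t_n-t_i}$, and for $i=n$ it is the product of terms $\frac{1}{t_n-t_m}$. Your argument instead dilates the whole configuration by $\mu=a/t_n^\ast$, producing an admissible competitor $q(x)=p^\ast(x/\mu)$, and then compares $q(-1)=p^\ast(-1/\mu)$ with $p^\ast(-1)$ via the strict monotonicity furnished by Lemma~\ref{lem2}. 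Your approach mirrors the translation argument of Lemma~\ref{lem3} and avoids any appeal to the explicit Lagrange weights, which makes it cleaner and more self-contained; the paper's approach, on the other hand, isolates exactly which node is responsible for the improvement and gives a monotonicity statement in $t_n$ alone, which is slightly more informative. Both are short and valid; your remark that Lemma~\ref{lem3} is convenient but not strictly needed (since $-1/\mu<0\le t_0^\ast$ automatically) is also correct.
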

\begin{proof}
  Suppose $t_n^\ast<a$. Consider the polynomial $q(x)$, which is obtained by replacing
  $t_n^\ast$ by $a$ and keep all other nodes unchanged. We claim that $q(-1)<p^\ast(-1)$.
  This can be seen from \cref{weightformula}, \cref{onenorm2}, and the facts
  that both $\frac{1}{t_n-t_m}$ and $\frac{1+t_n}{t_n-t_i}=1+\frac{1+t_i}{t_n-t_i}$
  decrease as $t_n$ increases. Hence the contradiction.
\end{proof}

\begin{lemma}\label{lem5}
If $a_1>a_2$, then $m_{a_1}<m_{a_2}$.
\end{lemma}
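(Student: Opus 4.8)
The plan is to produce, for the larger interval $[0,a_1]$, an admissible competitor obtained by rescaling the optimal polynomial $p^\ast$ associated with $[0,a_2]$, and then to show that this single competitor already undercuts $m_{a_2}$. Throughout we assume $n\ge 1$, as in Lemma~\ref{lem4}; for $n=0$ one has $p^\ast\equiv 1$ and $m_a=1$ for every $a$, so the statement must be read non-strictly in that degenerate case.

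Let $p^\ast$ be the optimal polynomial for $[0,a_2]$, with nodes $0=t_0^\ast<t_1^\ast<\cdots<t_n^\ast\le a_2$; the equality $t_0^\ast=0$ is Lemma~\ref{lem3}, and note that $p^\ast$ has degree exactly $n$ since by Lemma~\ref{lem1} it has $n$ distinct roots. Put $\lambda=a_1/a_2>1$ and define $q(x)=p^\ast(x/\lambda)$. Then $q$ is a polynomial of degree $n$, the points $\lambda t_i^\ast$ are strictly increasing and lie in $[0,\lambda a_2]=[0,a_1]$, and $q(\lambda t_i^\ast)=p^\ast(t_i^\ast)=(-1)^i$. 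Hence $q$ satisfies the equioscillation property of modulus $1$ on $[0,a_1]$, so that $m_{a_1}\le q(-1)$.

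It then remains to compare $q(-1)=p^\ast(-1/\lambda)$ with $p^\ast(-1)=m_{a_2}$. Since $1/\lambda\in(0,1)$, the point $-1/\lambda$ lies in $(-1,0)$, and in particular strictly to the left of $t_0^\ast=0$. By Lemma~\ref{lem2}, $p^\ast$ is positive and strictly decreasing on $(-\infty,t_0^\ast)$, which contains both $-1$ and $-1/\lambda$; as $-1<-1/\lambda$, this gives $p^\ast(-1/\lambda)<p^\ast(-1)$. Combining the two inequalities yields $m_{a_1}\le q(-1)=p^\ast(-1/\lambda)<p^\ast(-1)=m_{a_2}$, which is the assertion. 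The argument is entirely elementary, so there is no genuine obstacle; the only points requiring a moment's care are checking that the rescaled nodes remain inside $[0,a_1]$ with the correct strict ordering (so that $q$ is a legitimate competitor in the definition of $m_{a_1}$), and the degenerate case $n=0$ noted above.
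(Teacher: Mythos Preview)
Your proof is correct, but it takes a different route from the paper's. The paper argues in two steps: first, since any competitor on $[0,a_2]$ is automatically a competitor on $[0,a_1]\supset[0,a_2]$, one has $m_{a_1}\le m_{a_2}$; second, the strict inequality is obtained from Lemma~\ref{lem4}, whose proof shows that moving the rightmost node of the $a_2$-optimal polynomial out to $a_1$ strictly decreases the value at $-1$. You instead build a competitor on $[0,a_1]$ by \emph{rescaling} the $a_2$-optimal polynomial, $q(x)=p^\ast(x/\lambda)$ with $\lambda=a_1/a_2$, and then invoke Lemma~\ref{lem3} (so that $t_0^\ast=0$) together with the strict monotonicity of Lemma~\ref{lem2} on $(-\infty,0)$ to conclude $q(-1)=p^\ast(-1/\lambda)<p^\ast(-1)$. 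Both arguments are short; the paper's leans on Lemma~\ref{lem4}, while yours trades that for Lemmas~\ref{lem2}--\ref{lem3} and has the mild advantage of producing an explicit one-parameter family of competitors. One small remark: your appeal to Lemma~\ref{lem1} to justify that $p^\ast$ has degree exactly $n$ is slightly circular, since Lemma~\ref{lem1} is stated for polynomials already of degree $n$; the clean justification is that the $n$ sign changes of $p^\ast$ on $[0,a_2]$ force at least $n$ roots, hence degree exactly $n$. Your observation about the degenerate case $n=0$ is well taken and applies equally to the paper's argument via Lemma~\ref{lem4}.
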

\begin{proof}
  By the definition of the problem, we have $m_{a_1}\le m_{a_2}$ since the space
  of allowable functions becomes  larger as $a$ increases. The strict inequality is
  achieved by \cref{lem4}.
\end{proof}

\begin{lemma}\label{lem6}
$|p^\ast(x)|\le 1$ for $x\in (0,a)$.
\end{lemma}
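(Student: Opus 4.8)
The plan is to use the optimality of $p^\ast$ to identify the interior nodes $t_1^\ast,\dots,t_{n-1}^\ast$ with the critical points of $p^\ast$; once this is known, $|p^\ast|\le 1$ on $[0,a]$ follows from an elementary monotonicity argument. For $n=0$ the statement is trivial, so assume $n\ge 1$. First I would record the shape of $p^\ast$: it has degree at most $n$ and takes the value $(-1)^i$ at the $n+1$ nodes, so by the intermediate value theorem it has at least $n$ distinct real roots (one between each pair of consecutive nodes) and therefore has degree exactly $n$; by Lemma~\ref{lem1}, together with $t_0^\ast=0$ and $t_n^\ast=a$ (Lemmas~\ref{lem3} and~\ref{lem4}), its $n$ simple roots satisfy $0<x_1<\dots<x_n<a$ and interlace the nodes. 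Then $p^{\ast\prime}$ has degree $n-1$, so by Rolle's theorem it has exactly one simple root $c_k\in(x_k,x_{k+1})$ for each $k=1,\dots,n-1$ and no others; in particular $0<c_1$ and $c_{n-1}<a$, and $p^\ast$ is monotone on each of $[0,c_1]$, $[c_k,c_{k+1}]$ and $[c_{n-1},a]$. Consequently
\be
\max_{x\in[0,a]}|p^\ast(x)| \;=\; \max\bigl\{\,|p^\ast(0)|,\;|p^\ast(c_1)|,\dots,|p^\ast(c_{n-1})|,\;|p^\ast(a)|\,\bigr\},
\ee
and since $|p^\ast(0)|=|p^\ast(a)|=1$ it remains only to prove $|p^\ast(c_k)|=1$ for every $k$.

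The key step is a single-node perturbation. Fix $j\in\{1,\dots,n-1\}$; keep $t_0^\ast=0$, $t_n^\ast=a$ and all $t_i^\ast$ with $i\ne j$ fixed, and for $\tau$ in a small neighbourhood of $t_j^\ast$ contained in $(t_{j-1}^\ast,t_{j+1}^\ast)$ let $q_\tau$ be the unique polynomial of degree at most $n$ with $q_\tau(t_i^\ast)=(-1)^i$ for $i\ne j$ and $q_\tau(\tau)=(-1)^j$, so that $q_{t_j^\ast}=p^\ast$. For such $\tau$ the node vector $0=t_0^\ast<\dots<\tau<\dots<t_n^\ast=a$ is admissible and $q_\tau$ has the equioscillation property of modulus $1$, whence $q_\tau(-1)\ge m_a=p^\ast(-1)$; thus $\tau\mapsto q_\tau(-1)$ attains a local minimum at $\tau=t_j^\ast$, so its derivative there vanishes. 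Differentiating the identities defining $q_\tau$ with respect to $\tau$ shows that $\partial q_\tau/\partial\tau$ is a polynomial in $x$ of degree at most $n$ vanishing at the $n$ points $\{t_i^\ast:i\ne j\}$, hence $\partial q_\tau/\partial\tau = C(\tau)\prod_{i\ne j}(x-t_i^\ast)$; evaluating at $x=\tau$ and using that $q_\tau(\tau)=(-1)^j$ is constant in $\tau$ gives $C(t_j^\ast)=-p^{\ast\prime}(t_j^\ast)/\prod_{i\ne j}(t_j^\ast-t_i^\ast)$. Therefore
\be
0 \;=\; \frac{d}{d\tau}\,q_\tau(-1)\Big|_{\tau=t_j^\ast} \;=\; C(t_j^\ast)\prod_{i\ne j}(-1-t_i^\ast);
\ee
since every node is nonnegative, $\prod_{i\ne j}(-1-t_i^\ast)\ne 0$, which forces $p^{\ast\prime}(t_j^\ast)=0$.

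Thus every interior node $t_j^\ast$ is a critical point of $p^\ast$. There are $n-1$ interior nodes and $n-1$ critical points, and $t_k^\ast$ and $c_k$ both lie in $(x_k,x_{k+1})$, an interval containing a unique critical point of $p^\ast$; hence $t_k^\ast=c_k$ and $|p^\ast(c_k)|=|p^\ast(t_k^\ast)|=1$ for $k=1,\dots,n-1$. Substituting into the displayed formula for the maximum gives $|p^\ast(x)|\le 1$ on $[0,a]$, in particular on $(0,a)$.

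The step I expect to be the main obstacle is making this perturbation fully rigorous: one must check that $q_\tau$ depends smoothly (indeed rationally) on $\tau$ as long as the nodes stay distinct, justify differentiating the interpolation identities in $\tau$ on that basis, and verify that $\tau$ stays in the admissible range so that $q_\tau$ is a legitimate competitor in the minimization defining $m_a$. A smaller point, used implicitly throughout this discussion, is the existence of the minimizer $p^\ast$; it follows by a compactness argument once one observes from \eqref{weightformula} that $\|w\|_1=P(-1)\to\infty$ as any two nodes coalesce, so a minimizing sequence of node vectors remains in a compact set of mutually separated configurations.
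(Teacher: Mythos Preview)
Your argument is correct but follows a genuinely different route from the paper. The paper proves Lemma~\ref{lem6} by a \emph{finite} node-swap: assuming $p^\ast(t^\circ)>1$ for some $t^\circ\in(0,a)$, it replaces the nearest even-indexed node $t_j^\ast$ by $t^\circ$, forms the new interpolant $q$, observes that $p^\ast-q=c\prod_{i\ne j}(x-t_i^\ast)$ with $c>0$, and evaluates at $x=-1$ to obtain $p^\ast(-1)>q(-1)$, contradicting minimality (with an analogous argument when $p^\ast(t^\circ)<-1$). You instead run an \emph{infinitesimal} version of the same perturbation, differentiating $q_\tau(-1)$ in the node position and using the first-order optimality condition $\frac{d}{d\tau}q_\tau(-1)\big|_{\tau=t_j^\ast}=0$ to deduce $p^{\ast\prime}(t_j^\ast)=0$; this is precisely Lemma~\ref{lem8}, from which you then recover Lemma~\ref{lem6} via Rolle and a counting argument. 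The paper proceeds in the opposite logical order, deriving Lemma~\ref{lem8} as a consequence of Lemmas~\ref{lem6} and~\ref{lem7}. Your approach is standard variational reasoning and yields Lemma~\ref{lem8} along the way, at the cost of the smooth-dependence and existence checks you flag; the paper's discrete swap is shorter and purely algebraic, exploiting the same product $\prod_{i\ne j}(x-t_i^\ast)$ in one step rather than through a derivative.
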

\begin{proof}
  Suppose first that $p^\ast(t^\circ)>1$ at some point $t^\circ\in (0,a)$.
  Let $t_j^\ast$ be the closest
  node to $t^\circ$ such that $p^\ast(t^\ast_j)=1$. It is clear that $j$ has to be even and
  $t^\circ<t_{j+1}^\ast$ by the
  alternating property. Consider the polynomial
  $q(x)$ which is obtained by replacing $t_j^\ast$ with $t^\circ$. Then
  $p^\ast(t_i^\ast)-q(t^\ast_i)=0$ for
  $i\ne j$. That is,
  \be
  p^\ast(x)-q(x)=c\prod_{i=0}^{j-1}(x-t^\ast_i)\cdot \prod_{i=j+1}^{n}(t^\ast_i-x),
  \ee
  Furthermore, since $p^\ast(t^\circ)-q(t^\circ)=p^\ast(t^\circ)-1>0$, we must
  have $c>0$. It then follows that
  \be\label{pgtq}
  \ba
  p^\ast(-1)-q(-1)&=c\prod_{i=0}^{j-1}(-1-t^\ast_i)\cdot \prod_{i=j+1}^{n}(t^\ast_i+1)\\
  &=c\prod_{i=0}^{j-1}(1+t^\ast_i)\cdot \prod_{i=j+1}^{n}(t^\ast_i+1)>0
  \ea
  \ee
  due to the facts that $c>0$ and $j$ is even. Hence the contradiction.

  Suppose that $p^\ast(t^\circ)<-1$. Then a similar argument leads to \cref{pgtq} again by the
  facts that $c<0$ and $j$ is odd.
\end{proof}

\begin{lemma}\label{lem7}
$|p^\ast(x)|< 1$ for $x\in (0,a)$ and $x\ne t_i$, $i=0,1,\ldots,n$. 
\end{lemma}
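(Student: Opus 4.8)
The plan is to upgrade the weak bound of Lemma~\ref{lem6} to a strict one by a counting argument on the derivative $(p^\ast)'$. Throughout I assume $n\ge 1$, since for $n=0$ one has $p^\ast\equiv 1$ and there is nothing to prove on $(0,a)$. By Lemmas~\ref{lem3} and~\ref{lem4}, $t_0^\ast=0$ and $t_n^\ast=a$, so the only nodes lying in the open interval $(0,a)$ are the interior nodes $t_1^\ast,\dots,t_{n-1}^\ast$.

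First I would observe that each interior node is a critical point of $p^\ast$: since $p^\ast(t_i^\ast)=(-1)^i=\pm1$ while $|p^\ast|\le 1$ on all of $(0,a)$ by Lemma~\ref{lem6}, the point $t_i^\ast$ is an interior local extremum of $p^\ast$, whence $(p^\ast)'(t_i^\ast)=0$ for $i=1,\dots,n-1$. Next, suppose for contradiction that $|p^\ast(x^\circ)|=1$ for some $x^\circ\in(0,a)$ with $x^\circ\notin\{t_0^\ast,\dots,t_n^\ast\}$. The same reasoning applies: $x^\circ$ is an interior point at which $|p^\ast|$ attains its global bound $1$ on $(0,a)$, so $x^\circ$ is a local extremum and $(p^\ast)'(x^\circ)=0$. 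Since $x^\circ$ is distinct from every $t_i^\ast$, the polynomial $(p^\ast)'$, of degree at most $n-1$, would then have the $n$ distinct roots $x^\circ,t_1^\ast,\dots,t_{n-1}^\ast$, forcing $(p^\ast)'\equiv0$ and hence $p^\ast$ constant. This contradicts $p^\ast(t_0^\ast)=1\neq-1=p^\ast(t_1^\ast)$, and the lemma follows.

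The only point requiring a little care is the claim that a point in $(0,a)$ at which $|p^\ast|$ equals the bound $1$ must be a critical point; this relies on such a point having a full two-sided neighborhood inside $(0,a)$ on which the bound of Lemma~\ref{lem6} holds, which is precisely why the endpoints $t_0^\ast=0$ and $t_n^\ast=a$ must be excluded from the root count. As an alternative, and to match the product-formula flavor of the earlier lemmas, one may instead factor the polynomial $1-p^\ast(x)^2$: it is nonnegative on $(0,a)$ by Lemma~\ref{lem6}, vanishes at all $n+1$ nodes, and vanishes to even (hence at least second) order at each of the $n-1$ interior nodes, which already accounts for $2n$ roots with multiplicity and in particular pins down $\deg p^\ast=n$; writing
\be
1-p^\ast(x)^2=c\,(x-t_0^\ast)(t_n^\ast-x)\prod_{i=1}^{n-1}(x-t_i^\ast)^2
\ee
and using nonnegativity on $(0,a)$ to deduce $c>0$ then gives $|p^\ast(x)|<1$ for every $x\in(0,a)$ that is not a node.
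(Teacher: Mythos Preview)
Your proof is correct. Both the derivative-counting argument and the factorization of $1-(p^\ast)^2$ are valid and complete; in fact your first argument already derives Lemma~\ref{lem8} (that interior nodes are critical points) from Lemma~\ref{lem6} alone, and then uses it to finish Lemma~\ref{lem7}, whereas the paper proves Lemma~\ref{lem8} only after Lemma~\ref{lem7}.

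The paper's own proof is a one-line sketch: it asserts that if $|p^\ast|$ attained the value $1$ at a non-node in $(0,a)$, then $|p^\ast|$ would have to exceed $1$ somewhere, contradicting Lemma~\ref{lem6}. This is the standard Chebyshev-type intuition, but the mechanism is left implicit. Your argument supplies exactly the missing mechanism, via a different route: rather than producing a point where $|p^\ast|>1$, you show directly that an extra tangency point would give $(p^\ast)'$ one more root than its degree allows. What the paper's route buys is brevity (for a reader already fluent in equioscillation arguments); what your route buys is a self-contained proof that does not rely on that intuition, and the observation that the proof of Lemma~\ref{lem8} really needs only Lemma~\ref{lem6}, not Lemma~\ref{lem7}.
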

\begin{proof}
  This follows from \cref{lem6}, since otherwise there must be a point at which
  $|p^\ast|$ is greater than $1$.
\end{proof}

\begin{lemma}\label{lem8}
$p^{\ast\prime}(t^\ast_i)=0$ for $i=1,\ldots,n-1$. 
\end{lemma}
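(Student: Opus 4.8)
The plan is to deduce Lemma~\ref{lem8} from Fermat's theorem on interior extrema, using the bound $|p^\ast|\le 1$ established in Lemma~\ref{lem6}.

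First I would fix an index $i$ with $1\le i\le n-1$ and observe, via Lemmas~\ref{lem3} and~\ref{lem4}, that $t_0^\ast=0$ and $t_n^\ast=a$, so that the interlacing $0=t_0^\ast<t_1^\ast<\cdots<t_n^\ast=a$ places $t_i^\ast$ strictly inside the open interval $(0,a)$. This is the step that excludes the endpoint nodes $t_0^\ast$ and $t_n^\ast$, at which $p^{\ast\prime}$ need not vanish, and explains why the statement is restricted to $i=1,\dots,n-1$.

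Next, by the equioscillation property $p^\ast(t_i^\ast)=(-1)^i$, so $|p^\ast(t_i^\ast)|=1$; and by Lemma~\ref{lem6}, $|p^\ast(x)|\le 1$ for every $x\in(0,a)$, in particular on a whole neighborhood of $t_i^\ast$ contained in $(0,a)$. Hence $t_i^\ast$ is a local maximizer of $p^\ast$ when $i$ is even (there $p^\ast(t_i^\ast)=1$) and a local minimizer when $i$ is odd (there $p^\ast(t_i^\ast)=-1$). Since $p^\ast$ is a polynomial it is differentiable at the interior point $t_i^\ast$, and Fermat's theorem then forces $p^{\ast\prime}(t_i^\ast)=0$.

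There is no real obstacle here: the only points needing care are the appeal to Lemmas~\ref{lem3}--\ref{lem4}, which guarantees the nodes in question are genuinely interior, and the appeal to Lemma~\ref{lem6}, which supplies the two-sided inequality $|p^\ast|\le 1$ that upgrades ``$p^\ast$ attains its bound at $t_i^\ast$'' into a bona fide local extremum rather than merely a one-sided touching point. As a consistency check, $p^\ast$ has degree $n$, so $p^{\ast\prime}$ has degree $n-1$ and at most $n-1$ zeros; the lemma locates all of them, at $t_1^\ast,\dots,t_{n-1}^\ast$, which is exactly what one expects if $p^\ast$ is (a shifted, rescaled) Chebyshev polynomial.
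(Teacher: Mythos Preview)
Your proof is correct and follows essentially the same route as the paper's: from $|p^\ast|\le 1$ on $(0,a)$ (Lemma~\ref{lem6}) and $|p^\ast(t_i^\ast)|=1$, the interior nodes $t_1^\ast,\dots,t_{n-1}^\ast$ are local extrema, so Fermat's theorem gives $p^{\ast\prime}(t_i^\ast)=0$. Two minor remarks: the paper also cites Lemma~\ref{lem7}, which you (rightly) do not need for the derivative conclusion; and your appeal to Lemmas~\ref{lem3}--\ref{lem4} is a slight over-engineering, since the strict ordering $0\le t_0^\ast<t_i^\ast<t_n^\ast\le a$ already forces $t_i^\ast\in(0,a)$ for $1\le i\le n-1$ without knowing the endpoint equalities.
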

\begin{proof}
  By \cref{lem6} and \cref{lem7}, $p^\ast(x)$ achieves its interior extreme
  values at $t^\ast_i$. Hence, its first derivative vanishes at those interior
  nodes.
\end{proof}

The following lemma is key to the uniqueness.

\begin{lemma}\label{lem9}
  If $p_1(x)$ and $p_2(x)$ satisfy the equi-oscillation property of modulus $1$
  on $[0,a]$ and $|p_1(x)|\le 1$, $|p_2(x)|\le 1$, then $p_1(x)\equiv p_2(x)$.
\end{lemma}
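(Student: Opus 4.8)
The plan is to argue by contradiction. Suppose $d:=p_1-p_2\not\equiv 0$; since both $p_1$ and $p_2$ have degree at most $n$, so does $d$, and I will show that $d$ must nevertheless have at least $n+1$ zeros in $[0,a]$ counted with multiplicity --- the desired contradiction.

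The first step is to extract from the hypothesis $|p_k|\le 1$ on $[0,a]$ ($k=1,2$) the structure of the nodes. Any node of $p_k$ lying in the open interval $(0,a)$ is an interior point at which $p_k$ attains $\pm 1$, hence an interior extremum and thus a root of $p_k'$; since $p_k'$ has at most $n-1$ roots while $p_k$ has $n+1$ nodes, at least two of the nodes lie in $\{0,a\}$, so the smallest node is $0$ and the largest is $a$. As the equioscillation convention gives $p_k=1$ at the smallest node and $(-1)^n$ at the largest, this yields $p_1(0)=p_2(0)=1$ and $p_1(a)=p_2(a)=(-1)^n$, whence $d(0)=d(a)=0$. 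I also record the \emph{double-zero fact}: if $s\in(0,a)$ is a node of $p_1$ with $d(s)=0$, then $p_1(s)=p_2(s)=\pm 1$ is an interior extremum of both $p_1$ and $p_2$, so $p_1'(s)=p_2'(s)=0$, and therefore $s$ is a zero of $d$ of multiplicity at least $2$.

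The second step uses the weak equioscillation of $d$ together with these facts. Write $0=s_0<s_1<\cdots<s_n=a$ for the nodes of $p_1$, so that $p_1(s_i)=(-1)^i$; then $|p_2|\le 1$ gives $(-1)^i d(s_i)=1-(-1)^i p_2(s_i)\ge 0$ for every $i$, so $d$ changes sign weakly across each $[s_{i-1},s_i]$. Since $d$ is a nonzero polynomial with $d(0)=0$, it has a fixed nonzero sign just to the right of $0$; after possibly interchanging $p_1$ and $p_2$ --- which replaces $d$ by $-d$ and works with the nodes of $p_2$, whose extreme nodes are also $0$ and $a$ --- I may assume $d>0$ just to the right of $0$. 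Then $d(s_1)\le 0$ forces a zero of $d$ in $(0,s_1]$, and for $i=2,\dots,n$ the weak sign change forces a zero of $d$ in $[s_{i-1},s_i]$. Two such forced ``interval zeros'' can coincide only at a common interior node $s_i$, where by the double-zero fact $d$ vanishes to order at least $2$; so these $n$ interval zeros contribute total multiplicity at least $n$, all of them lying in $(0,a]$. Adding the zero of $d$ at $0$, which is distinct from all of them, gives total multiplicity at least $n+1$ in $[0,a]$, contradicting $\deg d\le n$. Hence $d\equiv 0$, i.e.\ $p_1\equiv p_2$.

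I expect the bookkeeping in the last step to be the main obstacle. A naive count of sign changes of $d$ across the $n$ subintervals yields only $n$ zeros, which does not contradict $\deg d\le n$. The extra unit is recovered by combining the endpoint information $d(0)=d(a)=0$ --- which uses $|p_k|\le 1$ to pin the extreme nodes at $0$ and $a$ --- with the fact that the sign of $d$ near $0$ is already determined, so after the harmless interchange of $p_1$ and $p_2$ the first forced zero can be placed strictly to the right of $0$ and is thus genuinely new. One must also invoke the double-zero fact so that the multiplicity count does not drop when two forced interval zeros collapse onto a shared node.
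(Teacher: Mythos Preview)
Your proof is correct, but it follows a genuinely different route from the paper's.

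The paper does not work with $d=p_1-p_2$ directly. Instead, it picks a point $\xi<0$ where $p_1(\xi)\neq p_2(\xi)$ (both values are positive there by Lemma~\ref{lem2}), assumes WLOG $0<p_1(\xi)<p_2(\xi)$, and forms the \emph{scaled} difference
\[
q(x)=p_1(x)-\frac{p_1(\xi)}{p_2(\xi)}\,p_2(x).
\]
Because the scaling factor is strictly less than $1$ and $|p_2|\le 1$ on $[0,a]$, the subtracted term has modulus strictly below $1$ at the equioscillation nodes of $p_1$, so $q$ alternates sign \emph{strictly} there. The intermediate value theorem then gives $n$ zeros of $q$ in $(0,a)$, and the extra zero at $\xi$ furnishes the $(n{+}1)$st.

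Your argument stays inside $[0,a]$: you first use $|p_k|\le 1$ to pin the extreme nodes of both polynomials at $0$ and $a$ (a fact the paper never isolates), so that $d(0)=d(a)=0$; then you handle the merely \emph{weak} sign alternation of $d$ at the nodes of $p_1$ via the double-zero observation, and recover the extra unit from the forced sign of $d$ near $0$. The trade-off is clear: the paper's scaling trick converts weak alternation into strict alternation and thereby eliminates all multiplicity bookkeeping, at the cost of stepping outside the interval and invoking Lemma~\ref{lem2}; your approach is intrinsic to $[0,a]$ and never leaves it, but pays for this with the endpoint analysis, the $p_1\leftrightarrow p_2$ interchange, and the careful zero-count. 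Both are valid; the paper's version is shorter and dovetails with the subsequent corollary about growth outside the interval.
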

\begin{proof}
  Suppose that $p_1$ is not identically equal to $p_2$. Then there exists a point $\xi<0$ such that
  $p_1(\xi)\ne p_2(\xi)>0$. Without loss of generality, assume that
  $0< p_1(\xi) < p_2(\xi)$. Consider the polynomial
  \be
  q(x)=p_1(x)-\frac{p_1(\xi)}{p_2(\xi)}p_2(x).
  \ee

  By construction, $q(\xi)=0$. Furthermore,
  \be
  \left|p_2(x)\frac{p_1(\xi)}{p_2(\xi)}\right|<1, \quad x\in [0,a],
  \ee
  since $p_1(\xi)<p_2(\xi)$ and $|p_2(x)|\le 1$ for $x\in [0,a]$.
  Thus, $q$ has the same signs as $p_1$ at its equi-oscillation nodes.
  By the intermediate value theorem, $q$ has $n$ zeros on $(0,a)$.
  Together with the zero at $\xi$, $q$ has $n+1$ zeros. Since $q$ is a polynomial
  of degree at most $n$, $q$ has to be identically equal to zero everywhere, which leads
  to a contradiction.
\end{proof}

Using these results we can now give an explicit formula for $p^\ast$ and $\{t_j^\ast\}:$
\begin{theorem}\label{thm1}
  $p^\ast$ is unique and $p^\ast(x)=T_n(1-2x/a)$, where $T_n$ is the Chebyshev polynomial
  of degree $n$.
\end{theorem}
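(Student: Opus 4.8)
The plan is to produce $g(x):=T_n(1-2x/a)$ as an explicit candidate, check that it satisfies the two hypotheses of the uniqueness lemma (Lemma~\ref{lem9}), verify that the optimizer $p^\ast$ satisfies them too, and conclude that $p^\ast\equiv g$. The key input is the classical oscillation data of the Chebyshev polynomial: writing $T_n(\cos\theta)=\cos n\theta$, one has $|T_n(u)|\le 1$ for $u\in[-1,1]$, and $T_n$ attains the values $(-1)^k$ at $u_k=\cos(k\pi/n)$, $k=0,1,\ldots,n$. The affine substitution $u=1-2x/a$ is an orientation-reversing bijection of $[0,a]$ onto $[-1,1]$ carrying $x=0\mapsto u=1$ and $x=a\mapsto u=-1$; hence, setting $t_k:=\tfrac{a}{2}\bigl(1-\cos(k\pi/n)\bigr)$, we obtain $0=t_0<t_1<\cdots<t_n=a$ with $g(t_k)=T_n(u_k)=(-1)^k$, so $g$ has the equioscillation property of modulus $1$ on $[0,a]$, and $x\in[0,a]$ forces $u\in[-1,1]$, hence $|g(x)|\le 1$ on $[0,a]$.

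Next I would check that the optimizer $p^\ast$ satisfies the same two hypotheses. It has the equioscillation property by definition. For the pointwise bound, Lemma~\ref{lem6} gives $|p^\ast(x)|\le 1$ on $(0,a)$, while Lemmas~\ref{lem3} and~\ref{lem4}, together with the equioscillation relations $p^\ast(t_0^\ast)=1$ and $p^\ast(t_n^\ast)=(-1)^n$, give $|p^\ast(0)|=|p^\ast(a)|=1$; thus $|p^\ast(x)|\le 1$ on the closed interval $[0,a]$. Now Lemma~\ref{lem9} applied to $p_1=p^\ast$ and $p_2=g$ yields $p^\ast\equiv g$, i.e.\ $p^\ast(x)=T_n(1-2x/a)$. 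Uniqueness follows by the same route: \emph{any} minimizer satisfies $|p|\le 1$ on $[0,a]$ by Lemma~\ref{lem6} and therefore equals $g$ by Lemma~\ref{lem9}. (Existence of a minimizer, if one wishes to establish it, is also not an obstacle: formula~\eqref{weightformula} shows that $\|w\|_1\to\infty$ whenever two of the nodes coalesce, so the infimum over the non-compact configuration space $0\le t_0<\cdots<t_n\le a$ is attained on a compact subset on which all consecutive gaps are bounded away from zero; the case $n=0$ is trivial.)

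I do not anticipate a genuinely hard step, since the analytic content is already encapsulated in Lemmas~\ref{lem6} and~\ref{lem9}. The one place to be careful is the bookkeeping around the orientation-reversing substitution: one must confirm that the alternation of $g$ along increasing $x$ is exactly $g(t_i)=(-1)^i$ \emph{with the value $+1$ at the left endpoint $t_0=0$} rather than the mirror-image pattern — this check is not vacuous when $n$ is odd, but it works out because $T_n(1)=1$. As a sanity check one may also note that the resulting optimal value is $m_a=p^\ast(-1)=T_n(1+2/a)$, the classical Chebyshev bound on the growth of a polynomial bounded by $1$ on $[0,a]$ at a point just outside that interval.
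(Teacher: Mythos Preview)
Your proposal is correct and follows essentially the same route as the paper: verify that $T_n(1-2x/a)$ satisfies the equioscillation property and the bound $|\cdot|\le 1$ on $[0,a]$, invoke Lemma~\ref{lem6} (together with Lemmas~\ref{lem3}--\ref{lem4} at the endpoints) to see that any minimizer $p^\ast$ does too, and then apply Lemma~\ref{lem9}. Your added care about the closed endpoints and the orientation of the affine change of variables, as well as the side remark on existence of the minimizer, are welcome elaborations of the paper's terse argument.
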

\begin{proof}
  The uniqueness follows from \cref{lem9} and the existence follows from
  explicit construction. Namely, it is straightforward to check that
  $T_n(1-2x/a)$ satisfies the equi-oscillation property of modulus $1$ and $|T_n(1-2x/a)|\le 1$ 
  on $[0,a]$.
\end{proof}

Since we only used the fact that $-1$ is outside the interval $[0,a]$ in our proofs
and $[0,a]$ can be translated to any interval $[a,b]$, we actually showed the following
statement is true.
\begin{corollary}
  Suppose that $p(x)$ is a polynomial of degree at most $n$ satisfying the
  property $p(t_i)=(-1)^i$ or $p(t_i)=(-1)^{i+1}$ for $a\le t_0<t_1<\ldots<t_n\le b$.
  Then
  \be
  |p(x)|\ge \left|T_n\left(\frac{2}{b-a}\left(x-\frac{a+b}{2}\right)\right)\right|,
  \qquad x\notin [a,b].
  \ee
\end{corollary}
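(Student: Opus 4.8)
The plan is to reduce the statement to Theorem~\ref{thm1} by an affine change of variables, exploiting the remark that only the location of the evaluation point relative to the node interval was used in the arguments leading to Theorem~\ref{thm1}. First I would normalize the hypotheses. If $p(t_i)=(-1)^{i+1}$, replace $p$ by $-p$; and if the evaluation point $x_0$ lies to the right of $[a,b]$, replace $p(x)$ by $p(a+b-x)$ and $x_0$ by $a+b-x_0<a$. Since $L(x):=\frac{2}{b-a}\left(x-\frac{a+b}{2}\right)$ satisfies $L(a+b-x)=-L(x)$ and $|T_n(-u)|=|T_n(u)|$, and since the reflection $x\mapsto a+b-x$ merely reverses the order of the nodes (turning one equioscillation parity into the other, which is still admissible by the ``or'' clause in the hypothesis), neither side of the claimed inequality is affected. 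Re-negating $p$ once more if necessary, we may therefore assume $p(t_i)=(-1)^i$ with $a\le t_0<\cdots<t_n\le b$ and $x_0<a$.

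Next I would introduce the increasing affine map $\sigma(s)=a+(a-x_0)s$, which sends $s=0$ to $a$, $s=-1$ to $x_0$, and $s=\ell$ to $b$, where $\ell=(b-a)/(a-x_0)>0$, so that $\sigma([0,\ell])=[a,b]$. Setting $P(s)=p(\sigma(s))$ gives a polynomial of degree at most $n$, and the points $s_i=\sigma^{-1}(t_i)$ satisfy $0\le s_0<\cdots<s_n\le\ell$ with $P(s_i)=(-1)^i$; that is, $P$ satisfies the equioscillation property of modulus $1$ on $[0,\ell]$. By the optimization result established through Lemma~\ref{lem9} and Theorem~\ref{thm1}, every such polynomial satisfies $P(-1)\ge m_\ell=T_n(1+2/\ell)$, and this value is $>1>0$ (also directly from Lemma~\ref{lem2}), so $|p(x_0)|=|P(-1)|=P(-1)\ge T_n(1+2/\ell)$. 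It then remains only to identify the right-hand side: from $x_0=a-(b-a)/\ell$ a direct computation gives $L(x_0)=-(1+2/\ell)$, and since $1+2/\ell>1$ lies in the monotone branch of $T_n$ we get $|T_n(L(x_0))|=T_n(1+2/\ell)$. Combining, $|p(x_0)|\ge|T_n(L(x_0))|$, as claimed.

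I expect no serious obstacle: the corollary is essentially an affine-invariance restatement of Theorem~\ref{thm1}. The one point that requires care is the parity bookkeeping under the reflection $x\mapsto a+b-x$ (and, relatedly, the initial ``or'' in the hypothesis): reversing the node order replaces $P(s_i)=(-1)^i$ by $P(s_i)=(-1)^{n-i}=(-1)^n(-1)^i$, so for odd $n$ one must absorb the sign flip by passing to $-P$, which is precisely what the two allowed equioscillation parities accomplish. The only remaining verification is the elementary identity $L(x_0)=-(1+2/\ell)$ relating the length of the (rescaled) node interval to the Chebyshev value, which follows immediately from the definition of $\ell$.
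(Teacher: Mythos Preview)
Your proposal is correct and follows exactly the approach the paper indicates: the paper's ``proof'' is the single sentence preceding the corollary, observing that the arguments leading to Theorem~\ref{thm1} used only that the evaluation point lies outside the node interval, so the result transfers by an affine change of variables. You have simply spelled out that affine reduction and the accompanying sign/reflection bookkeeping in full detail.
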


In other words, the translated and rescaled Chebyshev polynomials have the {\it least growth}
outside $[a,b]$ among all polynomials with the equi-oscillation property of modulus $1$ on $[a,b]$.
This should be compared to the fact, which appears as Theorem 1.10 in \cite{Rivlin}, that among
polynomials $p$ of degree $n,$ with sup-norm 1 on $[-1,1],$ the Chebyshev polynomial $T_n$ has the
following extremal properties: for any $x_0\notin(-1,1),$ and $0\leq k\leq n,$
\begin{equation}
    |p^{[k]}(x_0)|\leq |T^{[k]}_n(x_0)|.
\end{equation}

Using the explicit expressions of $T_n$
\be
T_n(x)=\left\{\begin{aligned}
&\cos(n\arccos(x)), \quad |x|\le 1 \\
&\frac{1}{2}\left(
\left(x-\sqrt{x^2-1}\right)^n + \left(x+\sqrt{x^2-1}\right)^n
\right), \quad |x|\ge 1, \end{aligned}\right.
\ee
We may calculate the optimal weights explicitly. Recall that $T_n(\cos\frac{i\pi}{n})=(-1)^i;$
consider the function
\be
\phi(x)=\prod_{i=0}^n (x-t_i).
\ee
Then
\be
l_i(x)=\frac{\phi(x)}{(x-t_i)\phi'(t_i)},
\ee
and
\be
w^\ast_i=l_i^\ast(-1)=-\frac{\phi^\ast(-1)}{(1+t_i)\phi^{\ast\prime}(t_i)}.
\ee

We first work on the standard interval $[-1,1]$.
By \cref{lem8}, the $n-1$ interior nodes are the zeros of $T'_n(x)$.
Thus, $\phi(x)=c(x^2-1)T'_n(x)$ for the interval $[-1,1]$. Back to $[0,a]$, we have
\be
\phi^\ast(x)=x(x-a)T'_n(2x/a-1),
\ee
and
\be
\phi^{\ast\prime}(x)=\frac{2}{a}x(x-a)T''_n(2x/a-1)+(2x-a)T'_n(2x/a-1),
\ee
where the irrelevant constant $c$ is dropped.
\begin{theorem}\label{thm2}
  The optimal nodes on $[0,a]$ for function extension formula~\cref{extformula} are
  Chebyshev nodes of the second kind shifted and scaled to the interval $[0,a]$:
  \be\label{optnodes}
  t^\ast_i=\frac{a}{2}\left(1-\cos\left(\frac{i\pi}{n}\right)\right), \quad i=0,\ldots,n.
  \ee
  And the associated optimal weights are
  \be\label{optweights}
  w^\ast_i=(-1)^i\frac{C_n(a)}{(1+\delta_{i0}+\delta_{in})
    na(1+t_i^\ast)}, \quad i=0,\ldots,n,
  \ee
  where
  \be
  C_n(a)=\frac{1+a}{\sqrt{x_0^2-1}}\left(\left(x_0+\sqrt{x_0^2-1}\right)^n
  -\left(x_0-\sqrt{x_0^2-1}\right)^n\right), \quad x_0=1+\frac{2}{a}.
  \ee
  Finally, the $l_1$ norm of the associated optimal weights is
  \be
  \ba
  \|w^\ast\|_1&=\sum_{i=0}^n |w^\ast_i|=T_n(1+2/a)\\
  &=\frac{1}{2}\left(\left(1+\frac{2}{a}-2\sqrt{\frac{1}{a}+\frac{1}{a^2}}\right)^n
  +\left(1+\frac{2}{a}+2\sqrt{\frac{1}{a}+\frac{1}{a^2}}\right)^n\right).
  \ea\label{conditionnumber}
  \ee
\end{theorem}

The function extension is, as expected, exponentially
ill-conditioned. As $a$ increases, the condition number decreases, due to the fact that
$-1$ is relatively closer to the origin with respect to the interval $[0,a]$;  for $a>1$
the condition number is approximately $e^{\frac{2n(\sqrt{a}+1)}{a}}.$ Since the extension
formula~\cref{extformula} only imposes the minimal conditions to ensure $\cC^n$ continuity across
the boundary, it seems quite reasonable that \cref{conditionnumber} can be viewed as the
{\it intrinsic condition number} of any linear function extension scheme.

The formula for $E_n[f]$ can be differentiated to give
\begin{equation}
  \partial_x^l E_n[f](x)=\sum_{j=0}^n(-t_j)^lw_jf^{[l]}(-t_j x),
\end{equation}
and therefore
\begin{equation}
  |\partial_x^l E_n[f](x)|\leq \left[\sum_{j=0}^nt_j^l|w_j|\right]\|f^{[l]}\|_{L^{\infty}([0,a])},
  \text{ with }x\in[-1,0].
\end{equation}
For the optimal weights we see that
\begin{equation}\label{eqn34}
  \sum_{j=0}^n t^{* l}_j|w^*_j|\leq a^l\|w^*\|_1
  \frac{\sum_{j=0}^n\frac{\left(\frac 12(1-\cos\left(\frac{\pi j}{n}\right))\right)^l}
    {(1+t_j^*)(1+\delta_{j0}+\delta_{jn})}}
       {\sum_{j=0}^n\frac{1}{(1+t_j^*)(1+\delta_{j0}+\delta_{jn})}}
\end{equation}
The ratio of sums is certainly less than 1, but also bounded below by $1/2(1+a)(n+1).$  

As $n$ gets large, the numerator and denominator in~\cref{eqn34} can be seen as $n/\pi$ times
the trapezoidal approximations to the integrals
\begin{equation}
  \int_{0}^{\pi}\frac{\left[\frac 12(1-\cos x)\right]^l dx}{1+\frac a2(1-\cos x)}\text{ and }
  \int_{0}^{\pi}\frac{dx}{1+\frac a2(1-\cos x)},
\end{equation}
respectively. Using contour integration to evaluate these integrals one can show that the
denominator is approximated by $\frac{n}{\sqrt{1+a}}$ and the numerator by
\begin{equation}
  \frac{n}{a2^{2(l-2)}}\sum_{j=0}^{l-1}\left(\begin{matrix} 2(l-j-1)\\l-j-1\end{matrix}\right)
    \left(\frac{-4}{a}\right)^j+8(-1)^{l+1}\frac{n}{a^l\sqrt{1+a}}.
\end{equation}
Clearly, as $a$ grows, the $j=0$ term dominates the numerator, and therefore we have the estimate
\begin{equation}
  \sum_{j=0}^n t^{* l}_j|w^*_j| \leq C_l\frac{1}{2^{2l}}\left( \begin{matrix} 2(l-1)\\l-1\end{matrix}
    \right)a^{l-\frac 12},
\end{equation}
where $C_l$ is uniformly bounded as a function of $a,l,n.$

\begin{table}[t]
\caption{\sf Condition number of function extension formula~\cref{extformula} in $l_1$ norm.
  The first row lists the order of the function extension.
  The first column lists the size of the interval.}
\sisetup{tight-spacing=true,exponent-product = \cdot,table-format=3.1e-2}
\centering
\begin{tabular}{
    |S[table-format=1.3]|
    S[scientific-notation = true,round-mode=places,round-precision=1,table-format=3.1e-2]
    S[scientific-notation = true,round-mode=places,round-precision=1,table-format=3.1e-2]
    S[scientific-notation = true,round-mode=places,round-precision=1,table-format=3.1e-2]
    S[scientific-notation = true,round-mode=places,round-precision=1,table-format=3.1e-2]
    S[scientific-notation = true,round-mode=places,round-precision=1,table-format=3.1e-2]
    S[scientific-notation = true,round-mode=places,round-precision=1,table-format=3.1e-2]
    S[scientific-notation = true,round-mode=places,round-precision=1,table-format=3.1e-2]
    |}
\hline
{\diagbox{$a$}{$n$}} & 2 & 3 & 4 & 5 & 6 & 7 & 8\\
\hline
0.15 & 409.9 & 11735.8 & 336017 & 9620746 & 275458698 & 7886861923 & 225814583082\\
0.25 & 161.0 & 2889.0 & 51841 & 930249 & 16692641 & 299537289 & 5374978561\\
0.5 & 49.0 & 485.0 & 4801 & 47525 & 470449 & 4656965 & 46099201\\
1 & 17.0 & 99.0 &  577 & 3363 & 19601 & 114243 & 665857 \\
2 & 7.0 & 26.0 &   97 &  362 & 1351 & 5042 & 18817\\
4 & 3.5 &  9.0 &   24 &   62 &  161 &  422 &  1104\\
\hline
\end{tabular}
\label{table1}
\end{table}

\section{Numerical algorithms}\label{sec3}
\subsection{Window function}
\Cref{table1} lists condition numbers of the function extension formula~\cref{extformula},
where the first row lists the extension order $n$, and the first column lists the size
of the interval, $a,$ where $f$ is defined. We observe that the condition numbers do
increase exponentially fast as $n$ increases. For most practical applications,
function extensions are very often used as an intermediate step. This is true for both
FFT-based solvers of boundary value problems of partial differential equations and
evaluation of volume potentials in integral equation methods. In this case, 
the high condition number of the function extension step should have a fairly
mild effect on the overall accuracy of the method. This is illustrated by numerical examples
presented in \cref{2dexamples}.

When the FFT is used to treat the inhomogeneous term in the governing PDE in a domain
with complex boundary, the inhomogeneous term is extended to a rectangular box containing
the original domain. In order for the FFT-based solver to achieve spectral convergence
rate, the extended inhomogeneous term must be smoothly rolled off to zero. For this, we
add a window function in our extension formula, i.e.,
\be\label{fextfinal}
E_n[f](x)=\left(\sum_{i=0}^nw_if(-t_ix)\right)\Phi^c(-x,r_0,r_1),
\ee
where $\Phi^c$ is a smooth
window function that equals one for $|x|\le r_0,$ in order to maintain the
$\mathcal{C}^n$ smoothness of the extension at the origin,
and vanishes for $|x|>r_1;$  $r_1$ is a parameter
to be determined. In order to minimize the additional frequency content introduced
by the function extension, we use prolate spheroidal wave functions (PSWFs) of order zero
to construct the window function. For any positive real number $c$, the first
prolate spheroidal wave function of order zero, denoted as $\psi_0^c$,
is the eigenfunction of the integral operator
$F_c: L^2[-1,1] \rightarrow L^2[-1,1]$, defined by the formula
\be\label{pswfintegraloperator}
F_c[\phi](x) = \int_{-1}^1 \phi(t)e^{icxt}dt,
\ee
corresponding to the eigenvalue of the largest magnitude, see~\cite{SlepianPollackI}.
Though the PSWFs are defined on the whole real axis, only its values on the ``standard''
interval $[-1,1]$ are needed for many practical applications. The graph of $\psi_0^c$ looks
like a bell, and is similar to a Gaussian $e^{-x^2/\delta}.$  The advantage of $\psi_0^c$ over
Gaussians, when used as a ``window'' function, lies in the fact that the width of the
Fourier spectrum of $\psi_0^c$ is roughly half of that of the corresponding Gaussian,
when both $\psi_0^c$ and the Gaussian are normalized to have effective support in
$[-1,1]$ to the same precision $\epsilon$.  Mathematically, it has been proven
that $\psi_0^c$ is the optimal window function in $L^2$ norm. The value of the parameter
$c$ depends on the requested precision logarithmically. See \cref{pswfgaussian}.

\begin{figure}[!ht]
\centering 
\includegraphics[height=40mm]{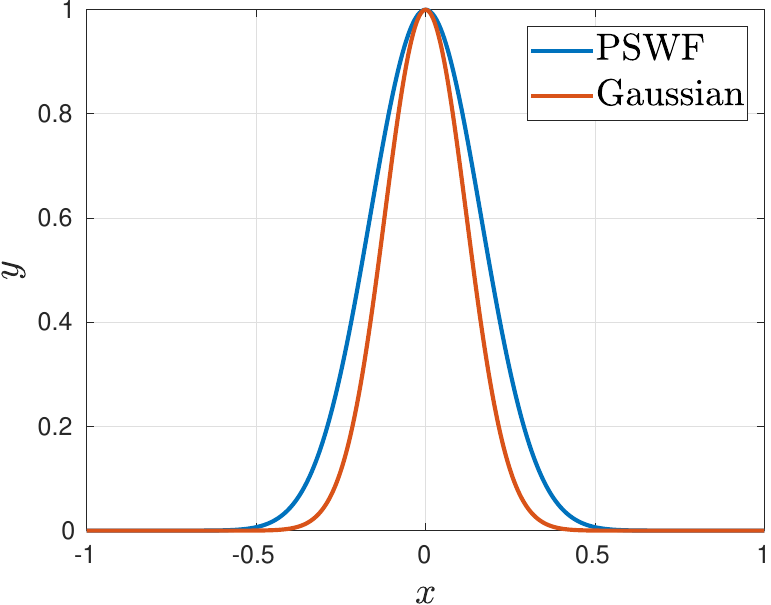}
\hspace{2mm}
\includegraphics[height=40mm]{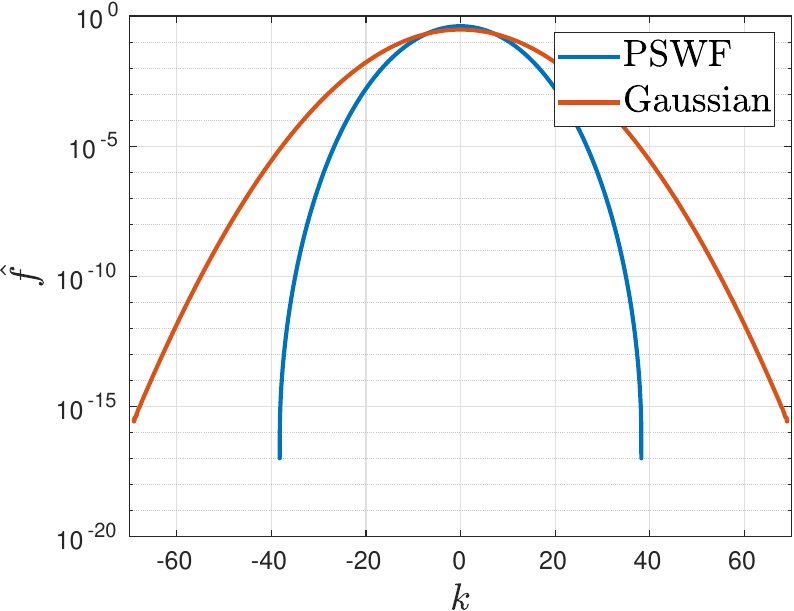}
\caption{\sf Comparison of $\psi_0^c(x)$ and the Gaussian. Left: $\psi_0^c(x)/\psi_0(0)$ with
  $c\approx 38.242000579833984$ and the Gaussian $G(x)=e^{-x^2\delta^2}$ with
  $\delta\approx \sqrt{\log(1/\epsilon)}$, where $\epsilon=10^{-15}$. $\psi_0^c(\pm 1) \approx
  G(\pm 1) \approx \epsilon$. Right: their Fourier transforms. Note that the Fourier transform
  of the Gaussian is almost twice as wide as that of the PSWF.
}
\label{pswfgaussian}
\end{figure}

Let
\be
\Psi_0^c(x)=
\begin{cases} &0, \quad x\le -1\\
& \frac{1}{C_0}\int_{-1}^x\psi_0^c(t)dt, \quad C_0=\int_{-1}^1\psi_0^c(t)dt, \quad x\in[-1,1]\\
& 1, \quad x\ge 1,\end{cases}
\ee
then define the window function in \cref{fextfinal} via the formula
\be\label{windowfunc}
\Phi^c(x,r_0,r_1)=1-\Psi_0^c\left(\frac{2x-(r_0+r_1)}{r_1-r_0}\right).
\ee
It is easy to verify that $\Phi^c(x)=1$ for $x\le r_0$ and $\Phi^c(x)=0$ for $x\ge r_1$.
Furthermore, even though $\psi_0^c$ is sharply truncated to zero and thus discontinuous at $\pm 1$,
$\Phi^c(x)$ can be viewed as a smooth function for numerical purpose to the requested precision.
Finally, $\Phi^c$ can be computed efficiently via the polynomial approximation of $\psi_0^c$.
At $15$ digits of accuracy, $\psi_0^c$ is well approximated by an even polynomial containing
$28$ nonzero terms. 

\subsection{Application to FFT-based elliptic PDE solvers}
In \cite{fryklund2018jcp}, an FFT-based solver has been outlined to solve
the Poisson equation on complex domains in two dimensions. The function extension
scheme in \cite{fryklund2018jcp} combines radial basis functions and partitions of unity
to achieve high-order function extension. With our current function extension scheme along
normal directions, it is straightforward to extend the numerical algorithm
in \cite{fryklund2018jcp} to solve boundary value problem of any constant-coefficient
elliptic partial differential equations on complex domains in both two and three dimensions,
e.g.,
\be
\ba
\mathcal{L} u(x) &= f(x), \quad x\in \Omega \subset \mathbb{R}^d,\\
u(x) &= g(x), \quad x\in \partial \Omega,
\ea\label{ellipticbvp}
\ee
where $\mathcal{L}$ is a constant-coefficient second-order elliptic differential operator and
$d=2, 3$ (other boundary conditions can be treated in a similar manner). 
The Green's function for the operator $\mathcal{L}$ is denoted by $G(x,y)$.
Here, we summarize the algorithmic steps for solving the elliptic BVPs on complex
domains in two and three dimensions.
We then discuss some implementation details of the scheme.

\subsubsection{Summary of the FFT-based elliptic PDE solvers on complex domains}
\label{schemeoutline}
For FFT-based solvers, the (possibly multiply connected) complex domain $\Omega$
is enclosed by a rectangular box. The rectangular box is discretized using an equi-spaced
grid. We assume that the boundary $\partial \Omega$ is sufficiently smooth and discretized
into a collection of patches. Each patch is further discretized into certain number of
collocation nodes such that a $p$th order integrator is available to integrate smooth functions.
The nodes on the boundary are denoted as $x^b_i$ for $i=1,\ldots, N_b$, where $N_b$ is
the total number of discretization points on the boundary.
We also assume that function handles are given for both $f$ and $g$ in \cref{ellipticbvp}
so that both functions can be evaluated anywhere efficiently in $\Omega$ and on $\partial \Omega$,
respectively. For simplicity, we assume that the step size is $h_0$ in all dimensions.
The FFT-based solver for \cref{ellipticbvp} is summarized as follows.

\begin{enumerate}[label=(\Roman*)]
\item Find the side length of the bounding box via the formula
  \be
  L = \max_{i=1}^{N_b} |x^b_i| + 48 h_0.
  \ee
  Set the number of points $M$ in each dimension to the even integer nearest to $2L/h_0$.
  Calculate the actual step size by $h=2L/M$ and fix $M^d$ equi-spaced points $x^{\rm vol}_i$
  on a tensor grid lying in $[-L,L)^d$. The cost of this step is $O(N_b)$.

  \item Separate the tensor grid $x^{\rm vol}_i$ into two classes - interior points $x^{\rm int}_i$
    ($i = 1, \ldots, N_{\rm int}$) and exterior points $x^{\rm ext}_i$
    ($i = 1, \ldots, N_{\rm ext}$). The cost of this step is $O(N_b+M^d)$.

  \item Extend the function $f$ to the whole computational box, i.e., calculate the values
    of the extended function $f^e$ at the exterior grid points $\{x^{\rm ext}_i\}$.
    The cost of this step is $O(N_b)$.

  \item Use the FFT to calculate a particular solution to $\mathcal{L}u^p = f^e$.
    The cost of this step is $O(M^d \log M)$

  \item Use NUFFT to calculate the values of the particular solution on the boundary. By
    the linearity of the problem and the superposition principle, the solution $u$ to
    \cref{ellipticbvp} is the sum of $u^p$ and $u^c$, where the correction $u^c$ is the solution
    to the problem
    \be
    \ba
    \mathcal{L} u^c(x) &= 0, \quad x\in \Omega \subset \mathbb{R}^d,\\
    u^c(x) &= g^c(x) := g(x)-u^p(x), \quad x\in \partial \Omega.
    \ea\label{purebvp}
    \ee
    That is, $u^c$ satisfies the homogeneous PDE in $\Omega$ with the boundary data equal to $g(x)-u^p(x)$.
    The cost of this step is $O(N_b+M^d\log M)$.
    
  \item Represent $u^c$ via suitable linear combination of layer potentials. For example,
    for the Dirichlet problem, $u^c(x) = (\mathcal{D}+c \mathcal{S})[\sigma](x)$, where
    $\mathcal{D}$ and $\mathcal{S}$ are the double and single layer potentials, respectively,
    and $c$ is a constant depending on the coefficients in the differential operator $\mathcal{L}$.
    This leads to a well-conditioned second-kind Fredholm boundary integral equation (BIE)
    for the unknown density $\sigma$. Discretize the resulting BIE via high-order quadrature
    (for example, the kernel-split quadrature in \cite{helsing2018jcp}), and solve it via GMRES and
    the fast multipole method (FMM). The cost of this step is $O(N_b)$.

  \item Use the FMM and high-order quadrature (see, for example, \cite{helsing2018jcp,zhu2022sisc})
    to evaluate $u^c$ on the tensor grid. The cost of this step is $O(N_b +M^d)$.

  \item Set $u=u^p+u^c$ on the tensor grid. The cost of this step is $O(M^d)$.
\end{enumerate}
The overall cost of this method is $O(N_b+M^d\log M).$

\subsubsection{Some implementation details}
In various steps (II, III, VII) of the scheme outlined above, we need to identify
target points that are close to the boundary, or to each patch on the boundary, wherein one needs to
apply a special quadrature to evaluate nearly singular integrals accurately. For FFT-based solvers,
all target points lie on an equi-spaced tensor grid. So they are already ordered and sorted. For each patch
on the boundary, one may find all target points that are at most $d_{\rm max}$ away from the patch, where
$d_{\rm max}$ is a user-specified parameter for function extension or close evaluation. This can be carried
out by finding an enclosing rectangular box whose boundary is $d_{\rm max}$ away from the patch, identifying
all target points inside that rectangular box, and removing target points that are too far from the patch.
This is essentially a {\em bin sort} algorithm. Obviously, the calculation is {\em local} for each
patch on the boundary and the cost of this algorithm is $O(N_b)$.

In $2d$ step (II) can be carried out as follows: 
By Gauss' lemma \cite{kress2014},
\be
\int_{\partial \Omega}\frac{\partial G_{\rm L}(x,y)}{\partial \nu(y)}ds(y)=
\left\{\begin{aligned} &-1, \quad x\in \Omega,\\
& -1/2, \quad x\in \partial \Omega,\\
& 0, \quad x\in \mathbb{R}^d\setminus \bar{\Omega},\end{aligned}\right.
\label{gausslemma}
\ee
where $G_{\rm L}$ is the standard Newtonian potential function in $\mathbb{R}^d$. Thus, one may determine whether
a given target point $x$ lies in the interior or exterior of $\Omega$ by evaluating the integral
in \cref{gausslemma}. When the domain is multiply connected, one may use a constant density that is
equal to index of the boundary to determine which domain the target point lies in.
The integral in \cref{gausslemma} can be discretized via smooth quadrature when the target
$x$ is far away from the source patch, and using  a high-order quadrature, as in
\cite{helsing2018jcp,zhu2022sisc}, when $x$ is close to the source patch. In order to use
fast algorithms such as the FMM, we write the high-order quadrature for close evaluation
as a correction of the smooth quadrature. The cost of the FMM for computing the action
of smooth quadrature is $O(N_b+M^d)$, where the prefactor depends on the precision
logarithmically. Here, due to the binary classification of points,
the accuracy of the FMM can be set very low ($10^{-2}$ in our implementation). The cost of
close evaluation correction is $O(N_b)$ since the number of targets close to each patch
is $O(1)$.

Step (III) can be carried out as follows: First, we identify target points that require
the function extension (some targets are far away from the boundary and the function values
are simply set to zero). For each such target point, we identify the closest source
point on the boundary. The cost of these two steps is $O(N_b),$ using the {\em bin sort} outlined
above. We then use Newton's method to find the point on the boundary whose normal passes
through the given target point, with the initial guess being the closest source point on the
boundary. For most practical cases, Newton's method converges rapidly with this initial guess.
Finally, we apply the normal extension formula to calculate the value of the extended
function at the given target.
 
In three dimensions we find that the approach by using  Gauss' lemma is slower than to use a
k-d data structure from \cite{kennel2004kdtree} for finding the closest discretization point
on the surface to a given a target point. A lookup operates at $\mathcal{O}(N_b)$ and generating
the k-d data structure operates at about eight million points per second per core. Once we have
the closest surface point, we proceed as above with Newton's method if the distance from the
target point to closest surface point is less or equal to $r_{1}$. Otherwise, we declare the
point given by lookup in the k-d data structure to be the closest point. To sort points as
inside or outside we look at the sign, relative to the outward normal, of the vector from the
target point to the declared closest point on the surface.

\section{Numerical results}\label{sec4}
We first show some illustrative function extension examples in one dimension, and then various
implementations of the algorithm described above in two and three dimensions.
We have implemented the numerical scheme for two dimensions in MATLAB with the FMM2D library from \cite{fmm2dlib}
and the NUFFT library from \cite{finufftlib}. The numerical results are obtained in MATLAB R2023a
on a machine with eight AMD Ryzen 7 PRO 6850U cores and 16MB cache. Since some parts
of the code are written in the single-thread mode, we use the Matlab command {\tt cputime}
to calculate the execution time for 2D examples.
The numerical scheme for three dimensions is implemented in Julia $1.8.0$ and Fortran. We use
FMM3DBIE \cite{fmm3dbielib} as the solver for the Laplace equation through a boundary integral
equation formulation \cite{GREENGARD2021100092}, the NUFFT library from \cite{finufftlib} and
the FMM3D library from~\cite{FMM3D}. 

\subsection{One dimension}
We first demonstrate the quality of the function extension formula~\cref{fextfinal} in one
dimension.
We use $D=[-1,1]$ as the interval where the original function is defined, and extend
the function to $[-L_h,L_h]$, where $L_h=1+32h$ with $h$ being the step size used in the
discrete Fourier transform. Since function extensions are used very often as an intermediate
step in numerical computation, as in the fast integral methods for solving elliptic PDEs
using a uniform grid, the choice above of $L_h$ is equivalent to having extra $32$ nodes outside
the original computational domain along the normal directions. Obviously, this would lead to the
ratio of number of interior nodes to that of ``artificial'' exterior nodes approaching
$0$ in the limit of $h \rightarrow 0$. One could choose the computational domain $[-L,L]$ such
that $L$ is independent of $h$, which would lead to a fixed ratio of number of interior nodes to
that of exterior nodes. To avoid caustics in dimensions greater than 1, the extension domain needs
to sufficiently small near points where the boundary is not convex. In our PDE applications,
using a fixed number of extra nodes seems to suffice.

In the following numerical experiments, we set the function extension order to $n=8$, $a=1$
in the formulae~\cref{optnodes}--\cref{optweights} for computing extension nodes and
weights, and the parameters in the window
function~\cref{windowfunc} to $c=40.590000152587891$ (i.e., 16 digits of accuracy),
$r_0=10^{-6}$, $r_1=32h$. 
We measure the quality of the function extension via the following quantities.
\begin{enumerate}[label=(\alph*)]
\item $\kappa=\|E_n[f]\|_\infty/\|f\|_\infty$, i.e., $\kappa$ is the ratio
  of the maximum norm of $E_n[f],$ defined in~\cref{fextfinal}, to that of the original function $f$.
  It is clear that the lower the value of $\kappa$, the better quality of $E_n[f]$. 
\item $\kappa_0=\|E_0[f]\|_\infty/\|f\|_\infty$, where $E_0[f]$ is the extended function without
  any window function. The difference between $\kappa$ and $\kappa_0$ shows the effect
  of the window function. In practice, both $\kappa$ and $\kappa_0$ depends on $h$. This is
  because smaller $h$ leads to the function extension in a smaller domain using data points
  closer to the boundary points. 
\item Relative $l^2$ error defined by the formula \be\label{error1d}
  E=\|\tilde{f}_{h/8}-f_{h/8}\|/\|f_{h/8}\|, \ee where $\|f_{h/8}\|$ is the
  $l^2$ norm of the original function $f$ on an eight-fold oversampled grid,
  $\tilde{f}_{h/8}$ is a column vector consisting of values of the inverse
  Fourier transform of $\hat{F}_h$ on the same oversampled grid, with
  $\hat{F}_h$ the Fourier transform of the extended function, $E_n(f),$ on
  $[-L_h,L_h]$ evaluated on the original grid. That is, $E$ measures the quality
  of the function extension by its Fourier transform.
\end{enumerate}
We use the following functions in our numerical experiments.
\be\label{testfunctions}
\ba
f_1(x)&=J_0(35(x+0.2)),\\
f_2(x)&=x^2e^{-30(1-x^2)},\\
f_3(x)&=T_{22}(x)=\cos(22\arccos(x)),
\ea
\ee
where $J_0$ is the first kind Bessel function of order zero, and $T_{22}$
is the Chebyshev polynomial of degree $22$. Note that $f_1$ is an oscillatory
function, $f_2$ has a boundary layer on the endpoints $\pm 1$, and $f_3$ is
the function that attains the largest condition number in our derivation.

\Cref{fig1}--\cref{fig3} present numerical results for these three
functions. Here, the left panel shows the extended function $E_0[f]$ without
the window function on $[-L_h,L_h]$ with $h=0.0037315$ (in red) and the original function
on $[-1,1]$. The middle panel shows the extended function $E_n[f]$ with the window
function. The right panel shows the convergence rate of the extension scheme, i.e.,
relative $l^2$ error $E$ defined in \cref{error1d} as a function of the step size $h$.
The parameters in these three functions are chosen such that the relative $l^2$ error
for the largest $h$ is $O(1)$. We use 25 equi-spaced step sizes in log10 scale on $[-1.22,-3.45]$,
with the largest step size chosen such that $L_{h_{\rm max}}<3$, that is, the function extension
only needs the function values on the original interval $[-1,1]$. 

\begin{figure}[!ht]
\centering 
\includegraphics[height=33mm]{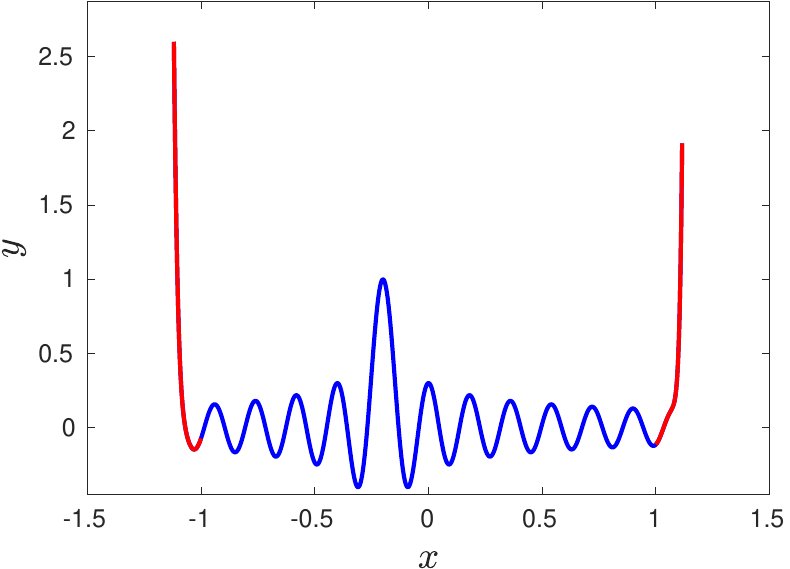}
\hspace{2mm}
\includegraphics[height=33mm]{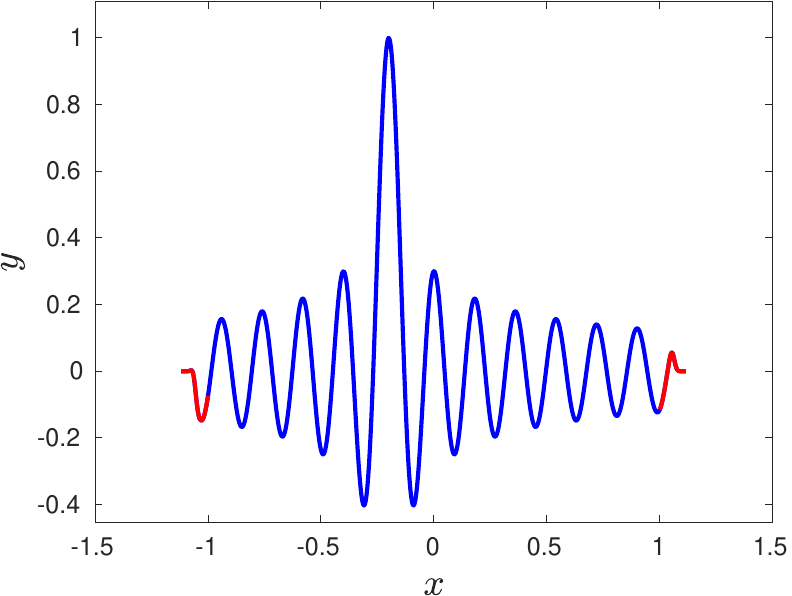}
\hspace{2mm}
\includegraphics[height=33mm]{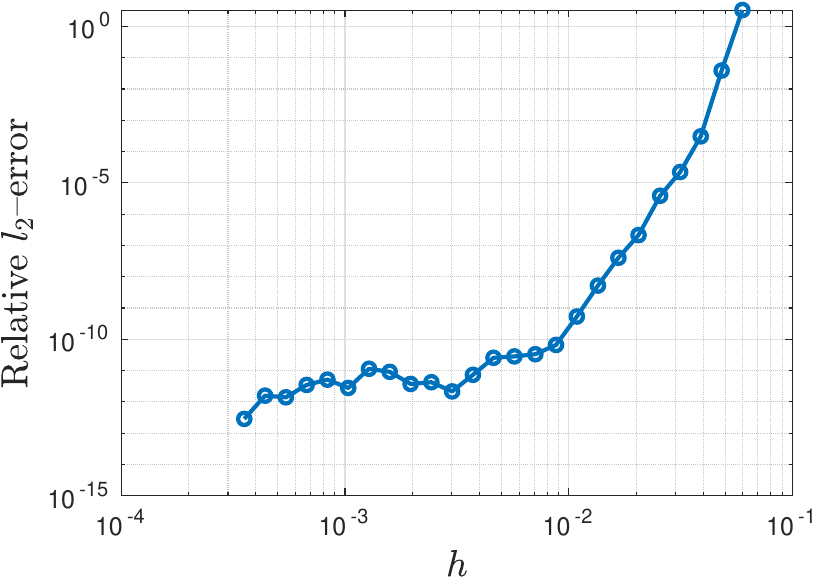}
\caption{\sf Numerical results for extending the oscillatory function $f_1(x)=J_0(35(x+0.2))$
  from $[-1,1]$ to roughly $[-1.12,1.12]$. The extension order is $n=8$ and the extension
  parameter $a$ is set to $1$. Left: the original function (in blue) and the extended
  function without any window function (in red),
  $\|E_0[f]\|_\infty/\|f\|_\infty\approx 2.6$.
  Middle: the original function (in blue) and the extended
  function with a window function (in red),
  $\|E_n[f]\|_\infty/\|f\|_\infty= 1$.
  Right: convergence study of the extension scheme, using the relative $l^2$-error, $E,$
  defined in~\cref{error1d}.
}
\label{fig1}
\end{figure}

\begin{figure}[!ht]
\centering 
\includegraphics[height=33mm]{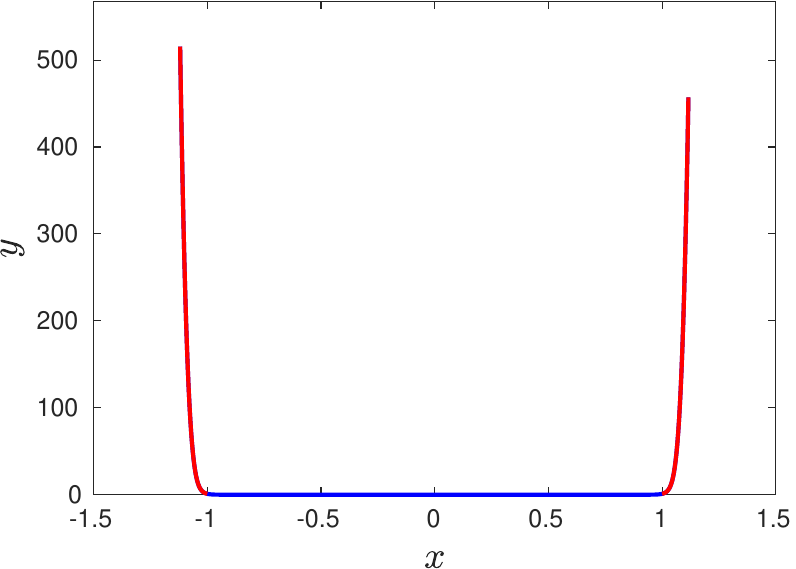}
\hspace{2mm}
\includegraphics[height=33mm]{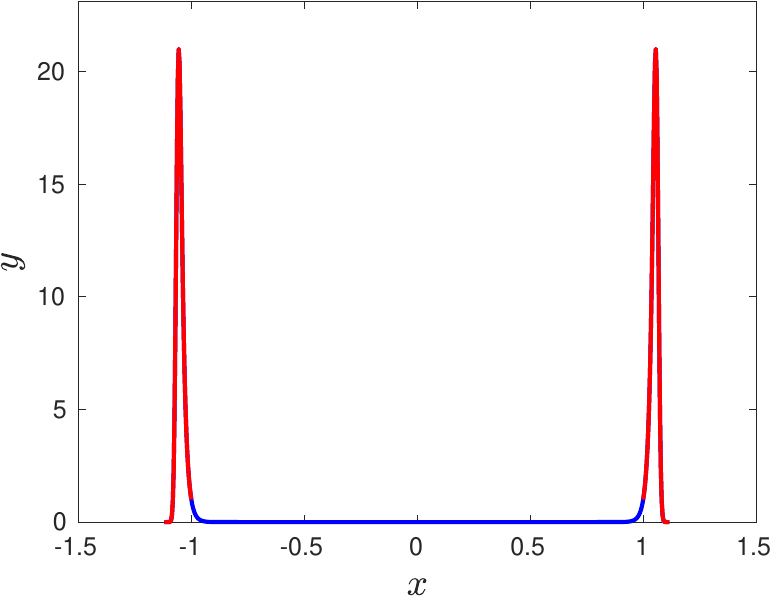}
\hspace{2mm}
\includegraphics[height=33mm]{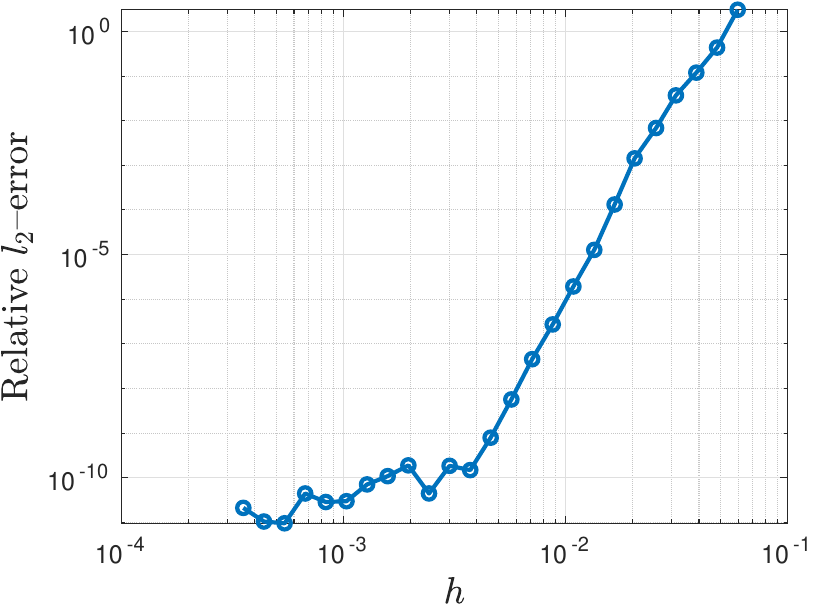}
\caption{\sf Same as \cref{fig1}, but for $f_2(x)=x^2e^{-30(1-x^2)}$.
  $\|E_0[f]\|_\infty/\|f\|_\infty\approx 647$, and
  $\|E_n[f]\|_\infty/\|f\|_\infty\approx 26$.
}
\label{fig2}
\end{figure}

\begin{figure}[!ht]
\centering 
\includegraphics[height=33mm]{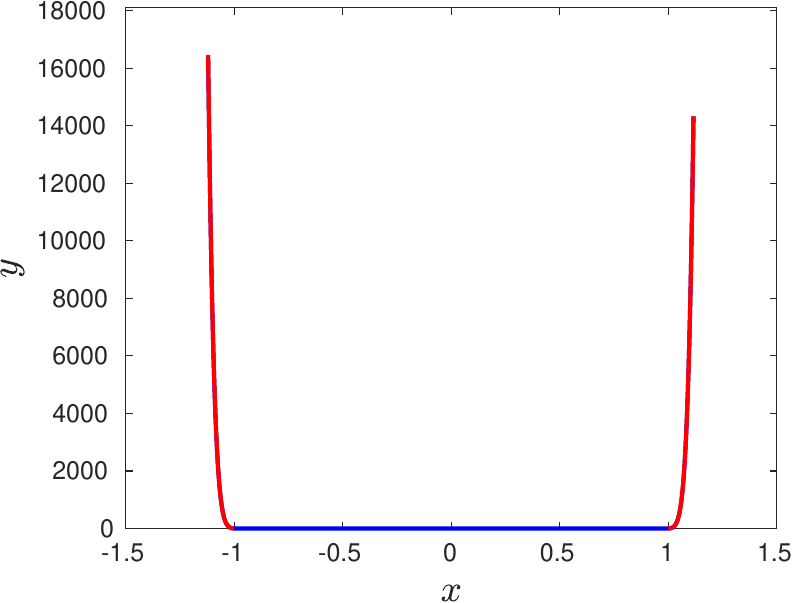}
\hspace{2mm}
\includegraphics[height=33mm]{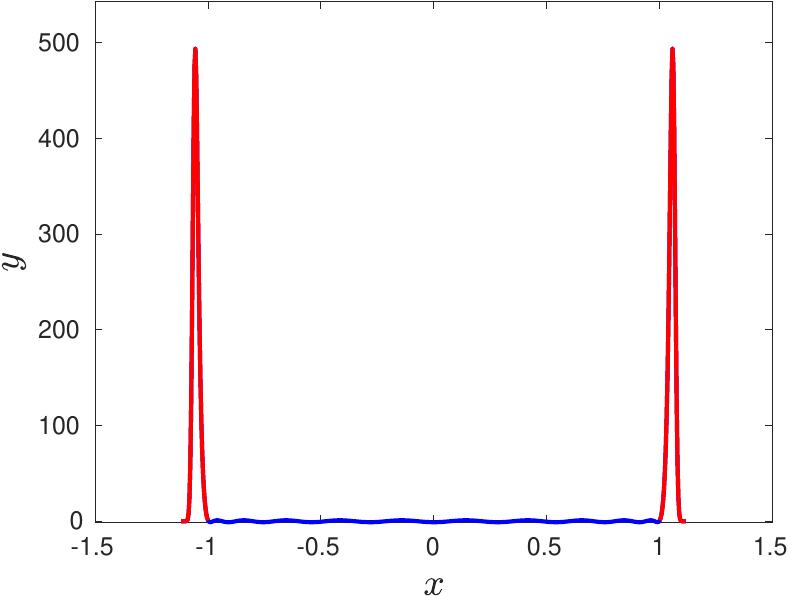}
\hspace{2mm}
\includegraphics[height=33mm]{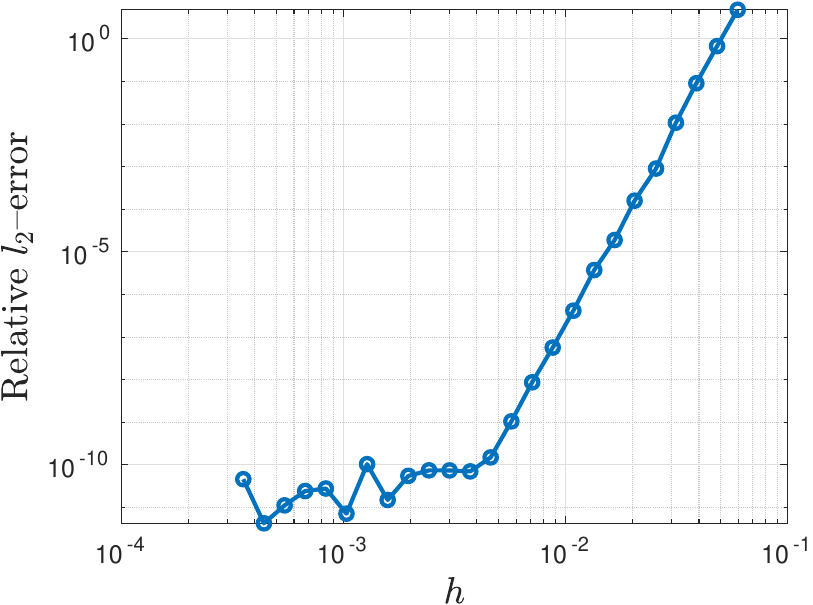}
\caption{\sf Same as \cref{fig1}, but for $f_3(x)=T_{22}(x)$.
  $\|E_0[f]\|_\infty/\|f\|_\infty\approx 16465$, and
  $\|E_n[f]\|_\infty/\|f\|_\infty\approx 494$.
}
\label{fig3}
\end{figure}

We observe that the raw condition number for $f_2$ is greater than that of $f_1$ due to
the existence of a boundary layer in $f_2$, and the raw condition number for $f_3$ is
greater than that of $f_2$ due to the equi-oscillation property of Chebyshev polynomials
on the original interval. For all three functions, the window function is very effective in
controlling the magnitude of the extended part and thus reducing the condition number;
the convergence order is roughly equal to the theoretical value $8$ before the curves
flatten out around $10^{-10}$. 

We would also like to remark that the error curves saturate at $10^{-12}$ or below if we set
$a=2$, clearly showing that the ``intrinsic'' condition number of the function extension
formula does have an effect on the quality of the function extension if it is used
as a standalone tool. However, when function extension is used as an intermediate step
of an FFT-based elliptic BVP solvers, the effect of the higher condition numbers for $a\le 1$
seems to be mitigated by a certain backward stability, which arises as the effect of function
extension is ``subtracted out"  in Step V. As is shown in the next subsection, one may achieve $13$--$14$ digits
of accuracy for the solution to the elliptic BVP for $a=1,$ even though the condition number of $E_8$ approaches $10^6.$

\subsection{Two dimensions -- FFT-based BVP solvers}\label{2dexamples}
In this section, we  use the inhomogeneous Dirichlet problem for the Poisson equation as an example
to illustrate the accuracy and efficiency of the FFT-based BVP solvers when our function
extension scheme is combined with the FFT to generate a particular solution $u^p$. As in the
1D experiments, we set $a=1$
in the formulae~\cref{optnodes}--\cref{optweights} for computing extension nodes and
weights, and the parameters in the window
function~\cref{windowfunc} to $c=40.590000152587891$ (i.e., 16 digits of accuracy),
$r_0=10^{-6}$, $r_1=32h$. For the first three examples, the function extension order is set
to $8$. The error is measured at the points in the uniform grid that are inside the given domain $\Omega$.

In the first example, the boundary $\Gamma_1$
is the unit circle centered at the origin. The inhomogeneous
term in the Poisson equation is
\be
f_1(x)=-100\sin(16\pi x_1)\sin(16\pi x_2), \quad x\in \Omega_1.
\ee
The boundary data is
\be
g_1(x)=50\sin(16\pi x_1)\sin(16\pi x_2)/(16\pi)^2, \quad x\in \Gamma_1,
\ee
and the exact solution $u_1(x)=g_1(x)$ for $x\in \Omega_1$.
The boundary $\Gamma_1$, the right-hand side $f_1$, and its 8th order normal extension $f_1^e$
with $h=10^{-2.5}$ are shown in \cref{example1a}.
\begin{figure}[!ht]
\centering 
\includegraphics[height=50mm]{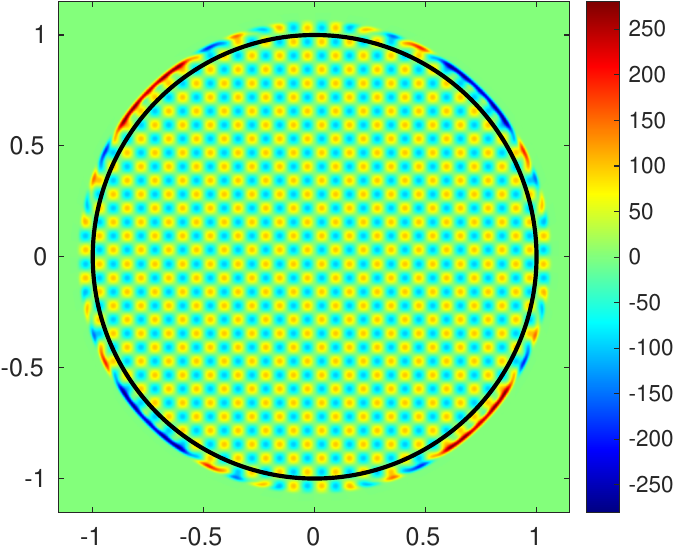}
\caption{\sf The boundary $\Gamma_1$, the right-hand side $f_1$, and its 8th order normal
  extension $f_1^e$ with $h=10^{-2.5}$ for Example 1.
}
\label{example1a}
\end{figure}

\begin{figure}[!ht]
\centering 
\subfloat{{% This file was created by matlab2tikz.
%
%The latest updates can be retrieved from
%  http://www.mathworks.com/matlabcentral/fileexchange/22022-matlab2tikz-matlab2tikz
%where you can also make suggestions and rate matlab2tikz.
%
\definecolor{mycolor1}{rgb}{0.00000,0.44700,0.74100}%
\definecolor{mycolor2}{rgb}{0.85000,0.32500,0.09800}%
\definecolor{mycolor3}{rgb}{0.92900,0.69400,0.12500}%
\definecolor{mycolor4}{rgb}{0.49400,0.18400,0.55600}%
\definecolor{mycolor5}{rgb}{0.46600,0.67400,0.18800}%
\definecolor{mycolor6}{rgb}{0.00000,0.74902,0.74902}%
\begin{tikzpicture}[scale=0.7]

\begin{loglogaxis}[%
xticklabel style={/pgf/number format/fixed},
xmin=1.4e-3,
xmax=1e-1,
xlabel style={font=\color{white!15!black}},
xlabel={$h$},
xticklabel style = {font=\small},
xminorticks=false,
ymin=1e-15,
ymax=6e4,
yminorticks=true,
ylabel style={font=\small\color{white!15!black}},
ylabel={Relative error},
yticklabel style = {font=\small},
axis background/.style={fill=white},
xmajorgrids,
xminorgrids,
ymajorgrids,
yminorgrids,
legend style={font=\normalsize,at={(1.0,0.195)}, anchor=north east, legend cell align=left, align=left, draw=white!15!black},
clip mode=individual,
]

\addplot [color=mycolor1, line width=1.0pt, mark=*, mark options={solid, mycolor1}, mark size=2.0pt]
  table[row sep=crcr]{%
6.309573e-02 3557.87382064198436637525\\ 
5.411695e-02 1317.32768704955469729612\\ 
4.641589e-02 62.96451397681433803655\\ 
3.981072e-02 4.10179958453420923092\\ 
3.414549e-02 0.15204838183373506344\\ 
2.928645e-02 0.00455308202559867332\\ 
2.511886e-02 0.00022247997606652105\\ 
2.154435e-02 0.00002016773117526561\\ 
1.847850e-02 0.00000284970564475333\\ 
1.584893e-02 0.00000036437604888518\\ 
1.359356e-02 0.00000005738761305440\\ 
1.165914e-02 0.00000001048293105386\\ 
1.000000e-02 0.00000000159864674372\\ 
8.576959e-03 0.00000000044184740209\\ 
7.356423e-03 0.00000000022592347764\\ 
6.309573e-03 0.00000000003276159514\\ 
5.411695e-03 0.00000000003194260371\\ 
4.641589e-03 0.00000000000604736763\\ 
3.981072e-03 0.00000000000012670595\\ 
3.414549e-03 0.00000000000010891508\\ 
2.928645e-03 0.00000000000007488413\\ 
2.511886e-03 0.00000000000006402888\\ 
2.154435e-03 0.00000000000005237124\\ 
1.847850e-03 0.00000000000004028119\\ 
1.584893e-03 0.00000000000003427495\\ 
};
\addlegendentry{$\ell^{2}$}

\addplot [color=black, dashed, line width=1.0pt, mark options={solid, black}]
  table[row sep=crcr]{%
6.309573e-02 475.21246771013284160290\\ 
5.411695e-02 60.24368796187992813884\\ 
4.641589e-02 7.63721952990111851989\\ 
3.981072e-02 0.96818644610221760161\\ 
3.414549e-02 0.12273904013705612404\\ 
2.928645e-02 0.01555988728660148354\\ 
2.511886e-02 0.00197255976665119005\\ 
2.154435e-02 0.00025006556675777206\\ 
1.847850e-02 0.00003170133992139878\\ 
1.584893e-02 0.00000401884580049185\\ 
1.359356e-02 0.00000050947756808313\\ 
1.165914e-02 0.00000006458754708830\\ 
1.000000e-02 0.00000000818789972359\\ 
8.576959e-03 0.00000000103799733704\\ 
7.356423e-03 0.00000000013158911419\\ 
6.309573e-03 0.00000000001668182986\\ 
5.411695e-03 0.00000000000211479080\\ 
4.641589e-03 0.00000000000026809649\\ 
3.981072e-03 0.00000000000003398716\\ 
};
\addlegendentry{$\mathcal{O}(h^{13.5})$ reference}

\end{loglogaxis}
\end{tikzpicture}%

%%% Local Variables:
%%% mode: latex
%%% TeX-master: t
%%% End:}}%
\subfloat{{% This file was created by matlab2tikz.
%
%The latest updates can be retrieved from
%  http://www.mathworks.com/matlabcentral/fileexchange/22022-matlab2tikz-matlab2tikz
%where you can also make suggestions and rate matlab2tikz.
%
\definecolor{mycolor1}{rgb}{0.00000,0.44700,0.74100}%
\definecolor{mycolor2}{rgb}{0.85000,0.32500,0.09800}%
\definecolor{mycolor3}{rgb}{0.92900,0.69400,0.12500}%
\definecolor{mycolor4}{rgb}{0.49400,0.18400,0.55600}%
\definecolor{mycolor5}{rgb}{0.46600,0.67400,0.18800}%
\definecolor{mycolor6}{rgb}{0.00000,0.74902,0.74902}%
\begin{tikzpicture}[scale=0.7]

\begin{loglogaxis}[%
xticklabel style={/pgf/number format/fixed},
xmin=110,
xmax=1500,
xlabel style={font=\color{white!15!black}},
xlabel={$N$},
xticklabel style = {font=\small},
xminorticks=false,
ymin=1e-3,
ymax=1e2,
yminorticks=true,
ylabel style={font=\small\color{white!15!black}},
ylabel={Time (sec)},
yticklabel style = {font=\small},
axis background/.style={fill=white},
xmajorgrids,
xminorgrids,
ymajorgrids,
yminorgrids,
legend style={font=\normalsize,at={(1.001,0.33)}, anchor=north east, legend cell align=left, align=left, draw=white!15!black},
clip mode=individual,
]

\addplot [color=mycolor1, line width=1.0pt, mark=*, mark options={solid, mycolor1}, mark size=2.0pt]
  table[row sep=crcr]{%
128 5.70999999999997953637\\ 
134 4.81000000000000227374\\ 
140 4.66000000000002501110\\ 
146 4.50000000000000000000\\ 
156 5.11000000000001364242\\ 
164 4.64999999999997726263\\ 
176 4.24000000000000909495\\ 
190 4.25999999999999090505\\ 
204 4.91000000000002501110\\ 
222 4.98000000000001818989\\ 
244 5.09999999999996589395\\ 
268 5.31000000000000227374\\ 
296 5.59000000000003183231\\ 
330 5.93000000000000682121\\ 
368 6.39999999999997726263\\ 
414 6.83999999999997498890\\ 
466 7.71000000000003637979\\ 
528 9.26999999999998181011\\ 
598 10.07999999999998408384\\ 
682 11.42000000000001591616\\ 
780 13.49000000000000909495\\ 
892 16.07999999999998408384\\ 
1024 18.93999999999999772626\\ 
1178 22.33000000000004092726\\ 
1358 26.44999999999993178790\\ 
};
\addlegendentry{$T_{\rm total}$}

\addplot [color=mycolor2, line width=1.0pt, mark=diamond, mark options={solid, mycolor2}, mark size=2.0pt]
  table[row sep=crcr]{%
128 0.18999999999994088284\\ 
134 0.12000000000000454747\\ 
140 0.08000000000004092726\\ 
146 0.06999999999999317879\\ 
156 0.06999999999999317879\\ 
164 0.08000000000004092726\\ 
176 0.04999999999995452526\\ 
190 0.03999999999996362021\\ 
204 0.14999999999997726263\\ 
222 0.06999999999999317879\\ 
244 0.09000000000003183231\\ 
268 0.10000000000002273737\\ 
296 0.10999999999995679900\\ 
330 0.08000000000004092726\\ 
368 0.12999999999999545253\\ 
414 0.12999999999999545253\\ 
466 0.19999999999998863132\\ 
528 0.20999999999997953637\\ 
598 0.21999999999997044142\\ 
682 0.33999999999997498890\\ 
780 0.31000000000000227374\\ 
892 0.45999999999997953637\\ 
1024 0.74000000000000909495\\ 
1178 0.87000000000000454747\\ 
1358 1.81999999999993633537\\ 
};
\addlegendentry{$T_{\rm FFT}$}

\addplot [color=mycolor3, line width=1.0pt, mark=square, mark options={solid, mycolor3}, mark size=2.0pt]
  table[row sep=crcr]{%
128 0.01272799999999999973\\ 
134 0.01037399999999999954\\ 
140 0.01089299999999999990\\ 
146 0.01137499999999999969\\ 
156 0.01280300000000000014\\ 
164 0.01375600000000000087\\ 
176 0.01679500000000000090\\ 
190 0.01660899999999999876\\ 
204 0.01915299999999999989\\ 
222 0.02078800000000000092\\ 
244 0.02571100000000000121\\ 
268 0.02627799999999999928\\ 
296 0.03022000000000000033\\ 
330 0.03385900000000000021\\ 
368 0.03869199999999999723\\ 
414 0.04531999999999999917\\ 
466 0.05301100000000000256\\ 
528 0.06380900000000000460\\ 
598 0.07161900000000000210\\ 
682 0.08259400000000000075\\ 
780 0.08949100000000000110\\ 
892 0.11232300000000000617\\ 
1024 0.13175999999999998824\\ 
1178 0.15344199999999999506\\ 
1358 0.18036499999999999755\\ 
};
\addlegendentry{$T_{\rm extension}$}

\addplot [color=mycolor4, line width=1.0pt, mark=star, mark options={solid, mycolor4}, mark size=2.0pt]
  table[row sep=crcr]{%
128 1.67606400000000466122\\ 
134 1.50845100000001819396\\ 
140 1.50953999999996124615\\ 
146 1.39894600000000446371\\ 
156 1.54415199999999086522\\ 
164 1.37856800000002266415\\ 
176 1.16933900000004320496\\ 
190 1.15231500000000464112\\ 
204 1.36570199999999308815\\ 
222 1.39386199999997506538\\ 
244 1.29519900000000909301\\ 
268 1.27279600000003645377\\ 
296 1.38071400000000221731\\ 
330 1.43231300000002281791\\ 
368 1.51765000000001593072\\ 
414 1.62291899999999089665\\ 
466 1.89331100000004770578\\ 
528 2.12417000000001587878\\ 
598 2.17407400000001604923\\ 
682 2.43709900000000434517\\ 
780 2.48766700000000229309\\ 
892 2.96645600000002485075\\ 
1024 3.46719799999999800377\\ 
1178 3.78192599999999323757\\ 
1358 3.99974899999994537581\\ 
};
\addlegendentry{$T_{\rm sort}$}

\end{loglogaxis}
\end{tikzpicture}%

%%% Local Variables:
%%% mode: latex
%%% TeX-master: t
%%% End:}}%
\caption{\sf Left: convergence order study for Example 1. Data points are shown in small
  circles, while the dashed line is the least squares fitting of the first $19$ data points,
  leading to an estimated convergence order $13.5$. The x-axis is
  the step size of the equispaced grid. Right: timing results for Example 1, where
  the x-axis is the number of points along each dimension and the y-axis shows
  the computational time of various steps in seconds. The total number
  of equispaced grid points in the volume is $N^2$.
}
\label{example1b}
\end{figure}
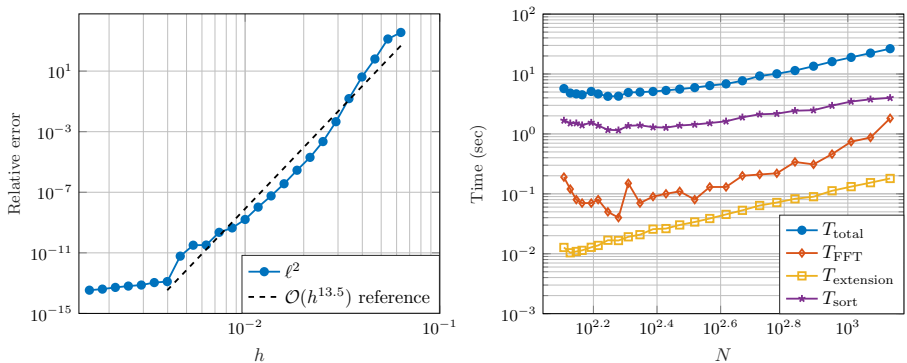

\begin{figure}[!ht]
\centering 
\includegraphics[height=50mm]{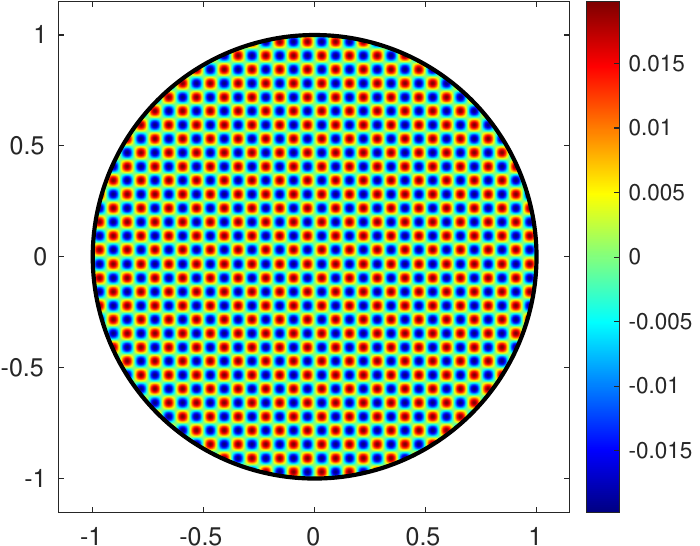}
\hspace{4mm}
\includegraphics[height=50mm]{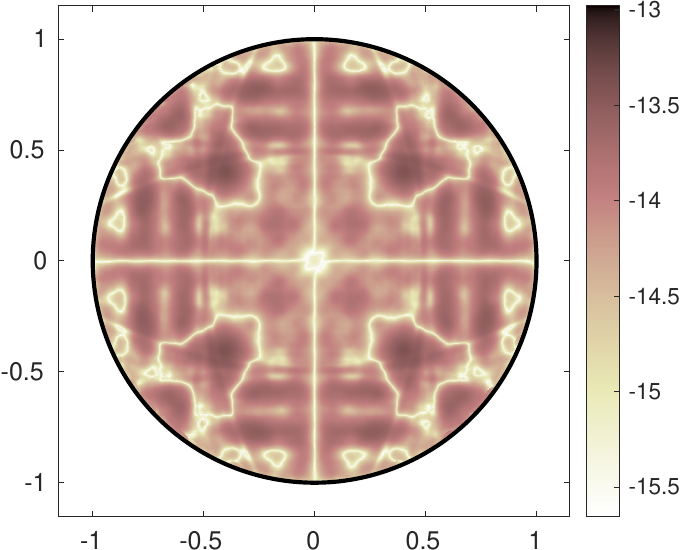}
\caption{\sf Left: solution to the Dirichlet Poisson problem in Example 1.
  Right: $\log_{10}$ of absolute error in
  the solution in $\Omega$. Here, the total number
  of uniform grid points is $728^2$. The relative $l^2$ error is about $9\times 10^{-13}$
  on the grid.
}
\label{example1c}
\end{figure}

We set the spacing $h$ of the uniform grid to $25$ logarithmically
equally spaced points between $10^{-1.2}$ and $10^{-2.8}$. The number of points $N$
along each dimension varies from $128$ to $1358$, correspondingly. The convergence order and timing
results (in seconds) are shown in \cref{example1b}. For the figure on the right side of \cref{example1b},
$T_{\rm total}$ is the total computational time; $T_{\rm FFT}$ is the computational time on FFT;
$T_{\rm extension}$ is the computational time on function extension; and $T_{\rm sort}$ is the time labeling
points as inside and outside (for points outside and within a distance $r_{1}$ of the boundary,
we also need to find the closest point on the boundary). We observe that the convergence order is
about $13.5$, much higher than the theoretical value $10$, which is probably due to the
simple geometry of the boundary curve. As to the timing results, the time on function extension
increases slower than the time on FFT as $N$ increases. It is clear that
other parts of the scheme such as the FFT, sorting points and the FMM dominate the total time for
large $N$, and the cost on function extension is negligible. The solution to the Dirichlet Poisson
problem and its numerical error are shown in \cref{example1c}.

In the second example, the boundary $\Gamma_2$ consists of two star fish.
The inhomogeneous
term in the Poisson equation is
\be
f_2(x)=-200\sin(10(x_1+x_2))+\frac{2}{9}+1000(1000x_1^2-1)e^{-500x_1^2}, \quad x\in \Omega_2.
\ee
The boundary data is
\be
g_2(x)=\sin(10(x_1+x_2))+\frac{x_1^2}{9}-x_2+8+e^{-500x_1^2}, \quad x\in \Gamma_2,
\ee
and the exact solution $u_2(x)=g_2(x)$ for $x\in \Omega_2$.
The boundary $\Gamma_2$, the right-hand side $f_2$, and its 8th order normal extension $f_2^e$
with $h=2\times 10^{-3}$ are shown in \cref{example2a}.

\begin{figure}[!ht]
\centering 
\includegraphics[height=50mm]{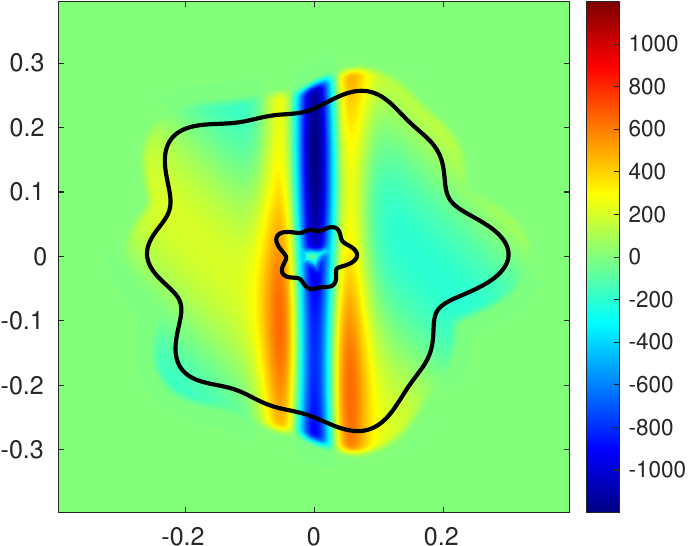}
\caption{\sf The boundary $\Gamma_2$, the right-hand side $f_2$, and its 8th order normal
  extension $f_2^e$ with $h=2\times 10^{-3}$ for Example 2.
}
\label{example2a}
\end{figure}

\begin{figure}[!ht]
\centering 
\subfloat{{% This file was created by matlab2tikz.
%
%The latest updates can be retrieved from
%  http://www.mathworks.com/matlabcentral/fileexchange/22022-matlab2tikz-matlab2tikz
%where you can also make suggestions and rate matlab2tikz.
%
\definecolor{mycolor1}{rgb}{0.00000,0.44700,0.74100}%
\definecolor{mycolor2}{rgb}{0.85000,0.32500,0.09800}%
\definecolor{mycolor3}{rgb}{0.92900,0.69400,0.12500}%
\definecolor{mycolor4}{rgb}{0.49400,0.18400,0.55600}%
\definecolor{mycolor5}{rgb}{0.46600,0.67400,0.18800}%
\definecolor{mycolor6}{rgb}{0.00000,0.74902,0.74902}%
\begin{tikzpicture}[scale=0.7]

\begin{loglogaxis}[%
xticklabel style={/pgf/number format/fixed},
xmin=1e-4,
xmax=1e-1,
xlabel style={font=\color{white!15!black}},
xlabel={$h$},
xticklabel style = {font=\small},
xminorticks=false,
ymin=1e-15,
ymax=1e1,
yminorticks=true,
ylabel style={font=\small\color{white!15!black}},
ylabel={Relative error},
yticklabel style = {font=\small},
axis background/.style={fill=white},
xmajorgrids,
xminorgrids,
ymajorgrids,
yminorgrids,
legend style={font=\normalsize,at={(1.0,0.195)}, anchor=north east, legend cell align=left, align=left, draw=white!15!black},
clip mode=individual,
]

\addplot [color=mycolor1, line width=1.0pt, mark=*, mark options={solid, mycolor1}, mark size=2.0pt]
  table[row sep=crcr]{%
3.981072e-02 0.50319639734929000330\\ 
3.254618e-02 0.19654481874398779806\\ 
2.660725e-02 0.03547796390922491883\\ 
2.175204e-02 0.02885625504051265999\\ 
1.778279e-02 0.00764003960411458859\\ 
1.453784e-02 0.00297437641091006849\\ 
1.188502e-02 0.00031447018357088789\\ 
9.716280e-03 0.00004905996301471407\\ 
7.943282e-03 0.00000474347915162700\\ 
6.493816e-03 0.00000063216435676157\\ 
5.308844e-03 0.00000006053495003017\\ 
4.340103e-03 0.00000000766979757826\\ 
3.548134e-03 0.00000000441867998377\\ 
2.900681e-03 0.00000000267352349675\\ 
2.371374e-03 0.00000000046076296439\\ 
1.938653e-03 0.00000000015265399373\\ 
1.584893e-03 0.00000000008130507445\\ 
1.295687e-03 0.00000000002996340517\\ 
1.059254e-03 0.00000000002130957618\\ 
8.659643e-04 0.00000000000696509075\\ 
7.079458e-04 0.00000000000443969314\\ 
5.787620e-04 0.00000000000085932771\\ 
4.731513e-04 0.00000000000010310265\\ 
3.868121e-04 0.00000000000000658649\\ 
3.162278e-04 0.00000000000000331793\\ 
};
\addlegendentry{$\ell^{2}$}

\addplot [color=black, dashed, line width=1.0pt, mark options={solid, black}]
  table[row sep=crcr]{%
3.981072e-02 0.44225807012676809915\\ 
3.254618e-02 0.11277757347744089833\\ 
2.660725e-02 0.02875873147055021950\\ 
2.175204e-02 0.00733359133640747868\\ 
1.778279e-02 0.00187009506815363291\\ 
1.453784e-02 0.00047688170822534010\\ 
1.188502e-02 0.00012160673941803928\\ 
9.716280e-03 0.00003101020403344744\\ 
7.943282e-03 0.00000790772582833840\\ 
6.493816e-03 0.00000201650165567190\\ 
5.308844e-03 0.00000051421597253100\\ 
4.340103e-03 0.00000013112712586288\\ 
3.548134e-03 0.00000003343794058444\\ 
2.900681e-03 0.00000000852680834092\\ 
2.371374e-03 0.00000000217437016790\\ 
1.938653e-03 0.00000000055447307340\\ 
1.584893e-03 0.00000000014139284730\\ 
1.295687e-03 0.00000000003605574053\\ 
1.059254e-03 0.00000000000919435778\\ 
8.659643e-04 0.00000000000234459794\\ 
7.079458e-04 0.00000000000059788183\\ 
5.787620e-04 0.00000000000015246225\\ 
4.731513e-04 0.00000000000003887848\\ 
3.868121e-04 0.00000000000000991417\\ 
3.162278e-04 0.00000000000000252815\\ 
};
\addlegendentry{$\mathcal{O}(h^{6.8})$ reference}

\end{loglogaxis}
\end{tikzpicture}%

%%% Local Variables:
%%% mode: latex
%%% TeX-master: t
%%% End:}}%
\hspace{4mm}
\subfloat{{% This file was created by matlab2tikz.
%
%The latest updates can be retrieved from
%  http://www.mathworks.com/matlabcentral/fileexchange/22022-matlab2tikz-matlab2tikz
%where you can also make suggestions and rate matlab2tikz.
%
\definecolor{mycolor1}{rgb}{0.00000,0.44700,0.74100}%
\definecolor{mycolor2}{rgb}{0.85000,0.32500,0.09800}%
\definecolor{mycolor3}{rgb}{0.92900,0.69400,0.12500}%
\definecolor{mycolor4}{rgb}{0.49400,0.18400,0.55600}%
\definecolor{mycolor5}{rgb}{0.46600,0.67400,0.18800}%
\definecolor{mycolor6}{rgb}{0.00000,0.74902,0.74902}%
\begin{tikzpicture}[scale=0.7]

\begin{loglogaxis}[%
xticklabel style={/pgf/number format/fixed},
xmin=110,
xmax=1500,
xlabel style={font=\color{white!15!black}},
xlabel={$N$},
xticklabel style = {font=\small},
xminorticks=false,
ymin=1e-3,
ymax=1e2,
yminorticks=true,
ylabel style={font=\small\color{white!15!black}},
ylabel={Time (sec)},
yticklabel style = {font=\small},
axis background/.style={fill=white},
xmajorgrids,
xminorgrids,
ymajorgrids,
yminorgrids,
legend style={font=\normalsize,at={(1.001,0.33)}, anchor=north east, legend cell align=left, align=left, draw=white!15!black},
clip mode=individual,
]

\addplot [color=mycolor1, line width=1.0pt, mark=*, mark options={solid, mycolor1}, mark size=2.0pt]
  table[row sep=crcr]{%
112 6.99000000000000909495\\ 
114 6.36000000000001364242\\ 
120 5.99000000000000909495\\ 
124 7.25000000000000000000\\ 
130 6.90000000000009094947\\ 
138 6.67999999999994997779\\ 
148 6.75999999999999090505\\ 
158 6.52999999999997271516\\ 
172 7.14000000000010004442\\ 
188 6.76999999999998181011\\ 
210 8.06999999999993633537\\ 
234 7.75999999999999090505\\ 
266 8.13999999999998635758\\ 
304 8.35999999999989995558\\ 
350 9.15000000000009094947\\ 
406 9.48000000000001818989\\ 
476 10.41999999999995907274\\ 
560 11.49000000000000909495\\ 
664 13.23000000000001818989\\ 
790 15.75999999999999090505\\ 
944 19.21000000000003637979\\ 
1134 22.25000000000000000000\\ 
1366 27.00999999999999090505\\ 
1648 33.50999999999999090505\\ 
1994 41.68999999999994088284\\ 
};
\addlegendentry{$T_{\rm total}$}

\addplot [color=mycolor2, line width=1.0pt, mark=diamond, mark options={solid, mycolor2}, mark size=2.0pt]
  table[row sep=crcr]{%
112 0.12000000000000454747\\ 
114 0.04999999999995452526\\ 
120 0.05999999999994543032\\ 
124 0.08000000000004092726\\ 
130 0.08000000000004092726\\ 
138 0.08000000000004092726\\ 
148 0.07000000000005002221\\ 
158 0.12999999999999545253\\ 
172 0.06999999999993633537\\ 
188 0.05000000000006821210\\ 
210 0.06000000000005911716\\ 
234 0.07999999999992724042\\ 
266 0.09000000000003183231\\ 
304 0.14999999999997726263\\ 
350 0.15999999999996816769\\ 
406 0.12999999999999545253\\ 
476 0.12000000000000454747\\ 
560 0.15999999999996816769\\ 
664 0.24000000000000909495\\ 
790 0.35000000000002273737\\ 
944 0.67000000000007275958\\ 
1134 0.92000000000007275958\\ 
1366 1.20000000000004547474\\ 
1648 1.99000000000000909495\\ 
1994 2.79999999999995452526\\ 
};
\addlegendentry{$T_{\rm FFT}$}

\addplot [color=mycolor3, line width=1.0pt, mark=square, mark options={solid, mycolor3}, mark size=2.0pt]
  table[row sep=crcr]{%
112 0.01674900000000000000\\ 
114 0.01248299999999999930\\ 
120 0.01362799999999999949\\ 
124 0.01533900000000000020\\ 
130 0.01091899999999999989\\ 
138 0.01734000000000000125\\ 
148 0.01945300000000000154\\ 
158 0.03091199999999999851\\ 
172 0.02798300000000000093\\ 
188 0.01804800000000000154\\ 
210 0.02512999999999999956\\ 
234 0.03615199999999999664\\ 
266 0.04384500000000000203\\ 
304 0.04809100000000000180\\ 
350 0.04351399999999999713\\ 
406 0.05035400000000000292\\ 
476 0.08052600000000000036\\ 
560 0.07561900000000000566\\ 
664 0.09207799999999999319\\ 
790 0.10923900000000000277\\ 
944 0.12670999999999998931\\ 
1134 0.15037300000000000666\\ 
1366 0.18577899999999999969\\ 
1648 0.22434999999999999387\\ 
1994 0.27268199999999997996\\ 
};
\addlegendentry{$T_{\rm extension}$}

\addplot [color=mycolor4, line width=1.0pt, mark=star, mark options={solid, mycolor4}, mark size=2.0pt]
  table[row sep=crcr]{%
112 1.98073099999994095555\\ 
114 1.61747200000003177323\\ 
120 1.63198700000001362120\\ 
124 1.96234399999994990615\\ 
130 1.87214999999992715018\\ 
138 1.94741999999996817650\\ 
148 1.93559799999998638498\\ 
158 1.56162099999999082200\\ 
172 2.17264300000000476487\\ 
188 1.91924699999999548439\\ 
210 2.29176700000001831725\\ 
234 2.04417599999991361770\\ 
266 1.93259000000004088626\\ 
304 2.14234300000008648368\\ 
350 2.15263400000006832613\\ 
406 2.38629299999999089366\\ 
476 2.54521600000002257147\\ 
560 2.56828000000004097458\\ 
664 3.01202800000003634295\\ 
790 3.49309100000000904984\\ 
944 4.07666999999998669324\\ 
1134 3.97579700000004088167\\ 
1366 4.31195400000002315721\\ 
1648 4.82978399999990948288\\ 
1994 5.04299600000000403099\\ 
};
\addlegendentry{$T_{\rm sort}$}

\end{loglogaxis}
\end{tikzpicture}%

%%% Local Variables:
%%% mode: latex
%%% TeX-master: t
%%% End:}}%
\caption{\sf Left: convergence order study for Example 2. Data points are shown in small
  circles, while the dashed line is the least squares fitting of all $25$ data points,
  leading to an estimated convergence order $6.8$. Right: timing results for Example 2.
}
\label{example2b}
\end{figure}

\begin{figure}[!ht]
\centering 
\includegraphics[height=50mm]{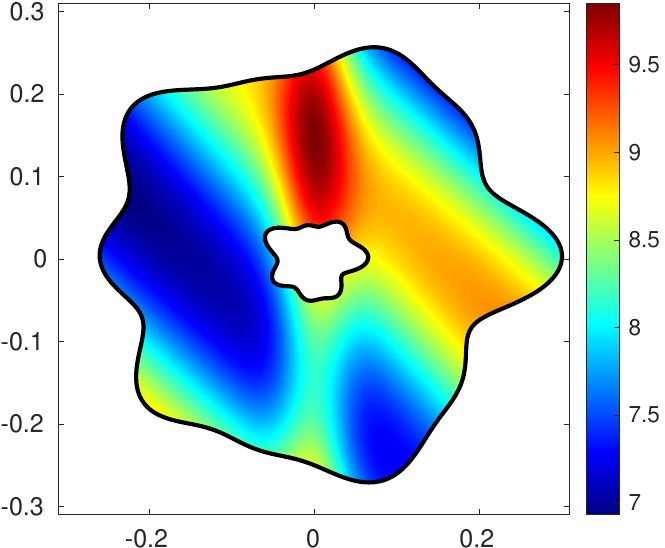}
\hspace{4mm}
\includegraphics[height=50mm]{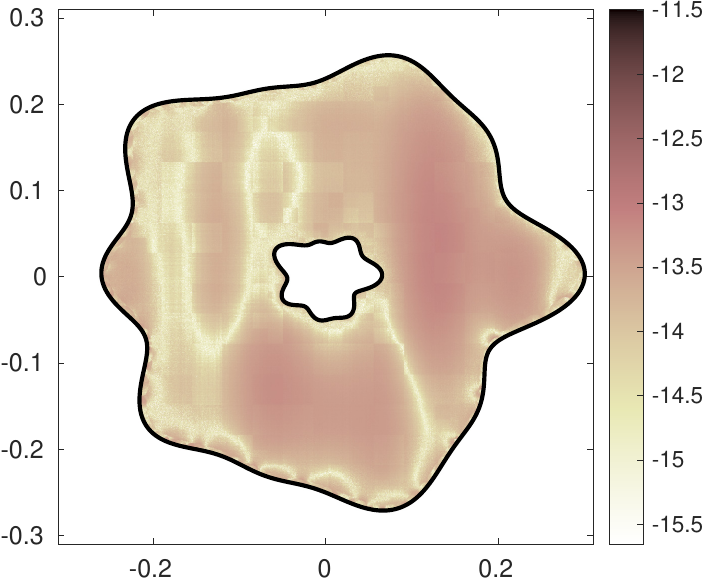}
\caption{\sf Left: solution to the Dirichlet Poisson problem in Example 2.
  Right: $\log_{10}$ of absolute error in the solution in $\Omega$. Here, the total number
  of uniform grid points is $3098^2$. The relative $l^2$ error is about $2\times 10^{-15}$
  on the grid.
}
\label{example2c}
\end{figure}

We set the spacing $h$ of the uniform grid to $25$ logarithmically
equally spaced points between $10^{-1.4}$ and $10^{-3.5}$. The number of points $N$
along each dimension varies from $112$ to $1994$, correspondingly. The convergence
order and timing results are shown in \cref{example2b}. We observe that the convergence
order is about $6.8$, and that the timing results exhibit similar pattern as Example 1.
The solution to the Dirichlet Poisson problem and its numerical error 
are shown in \cref{example2c}.

In the third example, the boundary $\Gamma_3$ consists of two star fish.
The inhomogeneous term in the Poisson equation is
\be
f_3(x)=-\sum_{k=0}^5e^{-\sqrt{2^k}}2^{2k}(\cos(2^kx_1)+\cos(2^kx_2)), \quad x\in \Omega_3.
\ee
The boundary data is
\be
g_3(x)=\sum_{k=0}^5e^{-\sqrt{2^k}}(\cos(2^kx_1)+\cos(2^kx_2)), \quad x\in \Omega_3,
\ee
and the exact solution $u_3(x)=g_3(x)$ for $x\in \Omega_3$.
The boundary $\Gamma_3$, the right-hand side $f_3$, and its 8th order normal extension $f_3^e$
with $h=10^{-2.2}$ are shown in \cref{example3a}.

\begin{figure}[!ht]
\centering 
\includegraphics[height=50mm]{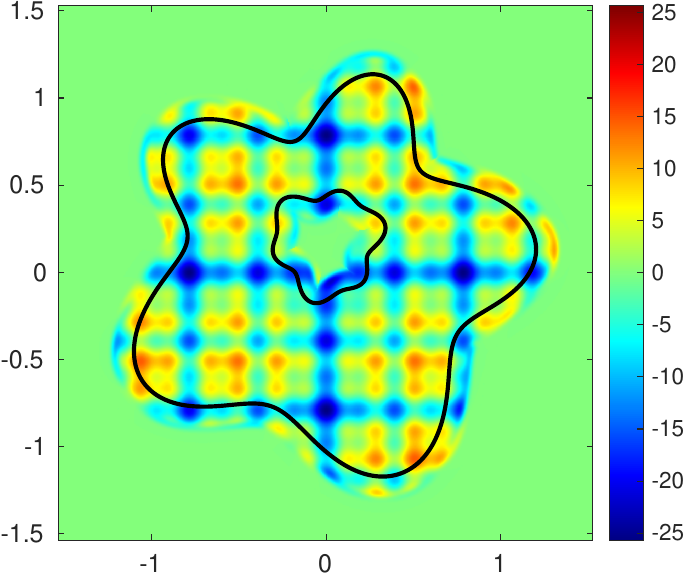}
\caption{\sf The boundary $\Gamma_3$, the right-hand side $f_3$, and its 8th order normal
  extension $f_3^e$ with $h=10^{-2.2}$ for Example 3.
}
\label{example3a}
\end{figure}

\begin{figure}[!ht]
\centering 
\subfloat{{% This file was created by matlab2tikz.
%
%The latest updates can be retrieved from
%  http://www.mathworks.com/matlabcentral/fileexchange/22022-matlab2tikz-matlab2tikz
%where you can also make suggestions and rate matlab2tikz.
%
\definecolor{mycolor1}{rgb}{0.00000,0.44700,0.74100}%
\definecolor{mycolor2}{rgb}{0.85000,0.32500,0.09800}%
\definecolor{mycolor3}{rgb}{0.92900,0.69400,0.12500}%
\definecolor{mycolor4}{rgb}{0.49400,0.18400,0.55600}%
\definecolor{mycolor5}{rgb}{0.46600,0.67400,0.18800}%
\definecolor{mycolor6}{rgb}{0.00000,0.74902,0.74902}%
\begin{tikzpicture}[scale=0.7]

\begin{loglogaxis}[%
xticklabel style={/pgf/number format/fixed},
xmin=0.7e-3,
xmax=1e-1,
xlabel style={font=\color{white!15!black}},
xlabel={$h$},
xticklabel style = {font=\small},
xminorticks=false,
ymin=1e-16,
ymax=1e1,
yminorticks=true,
ylabel style={font=\small\color{white!15!black}},
ylabel={Relative error},
yticklabel style = {font=\small},
axis background/.style={fill=white},
xmajorgrids,
xminorgrids,
ymajorgrids,
yminorgrids,
legend style={font=\normalsize,at={(1.0,0.195)}, anchor=north east, legend cell align=left, align=left, draw=white!15!black},
clip mode=individual,
]

\addplot [color=mycolor1, line width=1.0pt, mark=*, mark options={solid, mycolor1}, mark size=2.0pt]
  table[row sep=crcr]{%
7.943282e-02 0.46165549041917119943\\ 
6.619624e-02 0.24711699180558899247\\ 
5.516539e-02 0.12055478870129658631\\ 
4.597270e-02 0.07440524584290744703\\ 
3.831187e-02 0.01743188719062399583\\ 
3.192763e-02 0.00587861083314751565\\ 
2.660725e-02 0.00348998941874827063\\ 
2.217345e-02 0.00029458899109021335\\ 
1.847850e-02 0.00009587977598246820\\ 
1.539927e-02 0.00002178651794481234\\ 
1.283315e-02 0.00000707122018481544\\ 
1.069465e-02 0.00000085722918587561\\ 
8.912509e-03 0.00000010565500037875\\ 
7.427340e-03 0.00000001352112935659\\ 
6.189658e-03 0.00000000135868229869\\ 
5.158222e-03 0.00000000022868495775\\ 
4.298662e-03 0.00000000002244769231\\ 
3.582339e-03 0.00000000000038044661\\ 
2.985383e-03 0.00000000000000482812\\ 
2.487902e-03 0.00000000000000317257\\ 
2.073322e-03 0.00000000000001468151\\ 
1.727826e-03 0.00000000000001411095\\ 
1.439903e-03 0.00000000000002667139\\ 
1.199960e-03 0.00000000000002798281\\ 
1.000000e-03 0.00000000000002635625\\ 
};
\addlegendentry{$\ell^{2}$}

\addplot [color=black, dashed, line width=1.0pt, mark options={solid, black}]
  table[row sep=crcr]{%
7.943282e-02 19.27379448588492039107\\ 
6.619624e-02 3.37383445935966719276\\ 
5.516539e-02 0.59058214860072588692\\ 
4.597270e-02 0.10338007938660023766\\ 
3.831187e-02 0.01809645083127158632\\ 
3.192763e-02 0.00316774309549502225\\ 
2.660725e-02 0.00055450631798563106\\ 
2.217345e-02 0.00009706508621966713\\ 
1.847850e-02 0.00001699102545315926\\ 
1.539927e-02 0.00000297424086449128\\ 
1.283315e-02 0.00000052063418681802\\ 
1.069465e-02 0.00000009113584569420\\ 
8.912509e-03 0.00000001595312520901\\ 
7.427340e-03 0.00000000279255875661\\ 
6.189658e-03 0.00000000048883114167\\ 
5.158222e-03 0.00000000008556879403\\ 
4.298662e-03 0.00000000001497862531\\ 
3.582339e-03 0.00000000000262197474\\ 
2.985383e-03 0.00000000000045897079\\ 
2.487902e-03 0.00000000000008034181\\ 
};
\addlegendentry{$\mathcal{O}(h^{9.6})$ reference}

\end{loglogaxis}
\end{tikzpicture}%

%%% Local Variables:
%%% mode: latex
%%% TeX-master: t
%%% End:}}%
\hspace{4mm}
\subfloat{{% This file was created by matlab2tikz.
%
%The latest updates can be retrieved from
%  http://www.mathworks.com/matlabcentral/fileexchange/22022-matlab2tikz-matlab2tikz
%where you can also make suggestions and rate matlab2tikz.
%
\definecolor{mycolor1}{rgb}{0.00000,0.44700,0.74100}%
\definecolor{mycolor2}{rgb}{0.85000,0.32500,0.09800}%
\definecolor{mycolor3}{rgb}{0.92900,0.69400,0.12500}%
\definecolor{mycolor4}{rgb}{0.49400,0.18400,0.55600}%
\definecolor{mycolor5}{rgb}{0.46600,0.67400,0.18800}%
\definecolor{mycolor6}{rgb}{0.00000,0.74902,0.74902}%
\begin{tikzpicture}[scale=0.7]

\begin{loglogaxis}[%
xticklabel style={/pgf/number format/fixed},
xmin=110,
xmax=3000,
xlabel style={font=\color{white!15!black}},
xlabel={$N$},
xticklabel style = {font=\small},
xminorticks=false,
ymin=1e-3,
ymax=1e2,
yminorticks=true,
ylabel style={font=\small\color{white!15!black}},
ylabel={Time (sec)},
yticklabel style = {font=\small},
axis background/.style={fill=white},
xmajorgrids,
xminorgrids,
ymajorgrids,
yminorgrids,
legend style={font=\normalsize,at={(1.001,0.33)}, anchor=north east, legend cell align=left, align=left, draw=white!15!black},
clip mode=individual,
]

\addplot [color=mycolor1, line width=1.0pt, mark=*, mark options={solid, mycolor1}, mark size=2.0pt]
  table[row sep=crcr]{%
128 28.24000000000000909495\\ 
134 27.35000000000002273737\\ 
142 27.42999999999994997779\\ 
150 27.66000000000008185452\\ 
160 27.46000000000003637979\\ 
174 27.62999999999988176569\\ 
188 26.96000000000003637979\\ 
208 28.31999999999993633537\\ 
230 28.00000000000000000000\\ 
256 28.49000000000000909495\\ 
288 30.69000000000005456968\\ 
326 29.91000000000008185452\\ 
372 30.15999999999985448085\\ 
428 30.91000000000008185452\\ 
494 31.31999999999993633537\\ 
574 33.04000000000019099389\\ 
668 34.76999999999998181011\\ 
784 38.93999999999982719601\\ 
920 40.99000000000000909495\\ 
1086 47.15000000000009094947\\ 
1282 51.16000000000008185452\\ 
1520 56.69999999999981810106\\ 
1804 66.40000000000009094947\\ 
2146 77.33999999999991814548\\ 
2556 92.06000000000017280399\\ 
};
\addlegendentry{$T_{\rm total}$}

\addplot [color=mycolor2, line width=1.0pt, mark=diamond, mark options={solid, mycolor2}, mark size=2.0pt]
  table[row sep=crcr]{%
128 0.14000000000010004442\\ 
134 0.07999999999992724042\\ 
142 0.10000000000002273737\\ 
150 0.07999999999992724042\\ 
160 0.08999999999991814548\\ 
174 0.08999999999991814548\\ 
188 0.06999999999993633537\\ 
208 0.07999999999992724042\\ 
230 0.09999999999990905053\\ 
256 0.09999999999990905053\\ 
288 0.11999999999989086064\\ 
326 0.13999999999987267074\\ 
372 0.15000000000009094947\\ 
428 0.17999999999983629095\\ 
494 0.18000000000006366463\\ 
574 0.17000000000007275958\\ 
668 0.41000000000008185452\\ 
784 0.29999999999995452526\\ 
920 0.57999999999992724042\\ 
1086 0.98000000000001818989\\ 
1282 1.01999999999998181011\\ 
1520 1.91000000000008185452\\ 
1804 2.59999999999990905053\\ 
2146 3.33000000000015461410\\ 
2556 4.35000000000013642421\\ 
};
\addlegendentry{$T_{\rm FFT}$}

\addplot [color=mycolor3, line width=1.0pt, mark=square, mark options={solid, mycolor3}, mark size=2.0pt]
  table[row sep=crcr]{%
128 0.03975100000000000161\\ 
134 0.03124800000000000147\\ 
142 0.02214999999999999955\\ 
150 0.02344800000000000009\\ 
160 0.02672100000000000170\\ 
174 0.04814500000000000030\\ 
188 0.03340100000000000013\\ 
208 0.04110199999999999965\\ 
230 0.04899099999999999983\\ 
256 0.05619299999999999989\\ 
288 0.06794799999999999451\\ 
326 0.07989100000000000368\\ 
372 0.09033499999999999863\\ 
428 0.10486800000000000288\\ 
494 0.12519500000000000073\\ 
574 0.14286599999999999300\\ 
668 0.17265800000000000591\\ 
784 0.21951499999999998791\\ 
920 0.26373499999999999721\\ 
1086 0.31860699999999997356\\ 
1282 0.38562899999999999956\\ 
1520 0.45442900000000002736\\ 
1804 0.53817899999999996297\\ 
2146 0.69325099999999995060\\ 
2556 0.82241200000000003190\\ 
};
\addlegendentry{$T_{\rm extension}$}

\addplot [color=mycolor4, line width=1.0pt, mark=star, mark options={solid, mycolor4}, mark size=2.0pt]
  table[row sep=crcr]{%
128 2.97836800000009116474\\ 
134 3.17407100000000452766\\ 
142 3.00397499999993167208\\ 
150 3.37576600000004090063\\ 
160 3.13925599999991833400\\ 
174 3.04143700000000904993\\ 
188 2.84140800000019089211\\ 
208 2.99241000000006351911\\ 
230 3.28221200000003632269\\ 
256 3.05045200000003635665\\ 
288 3.11648699999999090338\\ 
326 3.30347800000007296717\\ 
372 3.32612300000007277134\\ 
428 3.42978000000000005087\\ 
494 3.56604799999989996451\\ 
574 3.96975500000008185708\\ 
668 4.10007700000005481655\\ 
784 4.66957600000006323171\\ 
920 4.80627499999993634816\\ 
1086 5.01096999999995418307\\ 
1282 5.32682000000016397934\\ 
1520 5.85804999999986364401\\ 
1804 6.37662000000008166722\\ 
2146 6.83689699999999156432\\ 
2556 8.10208500000020137577\\ 
};
\addlegendentry{$T_{\rm sort}$}

\end{loglogaxis}
\end{tikzpicture}%

%%% Local Variables:
%%% mode: latex
%%% TeX-master: t
%%% End:}}%
\caption{\sf Left: convergence order study for Example 3. Data points are shown in small
  circles, while the dashed line is the least squares fitting of first $19$ data points,
  leading to an estimated convergence order $9.6$. Right: timing results for Example 3.
}
\label{example3b}
\end{figure}

\begin{figure}[!ht]
\centering 
\includegraphics[height=50mm]{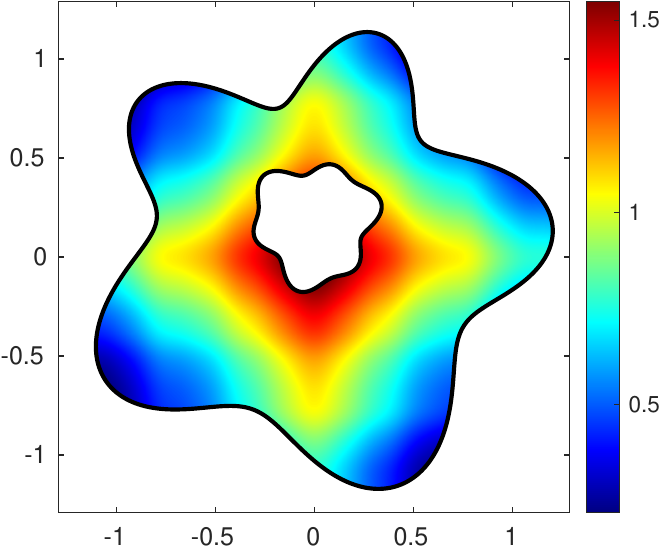}
\hspace{4mm}
\includegraphics[height=50mm]{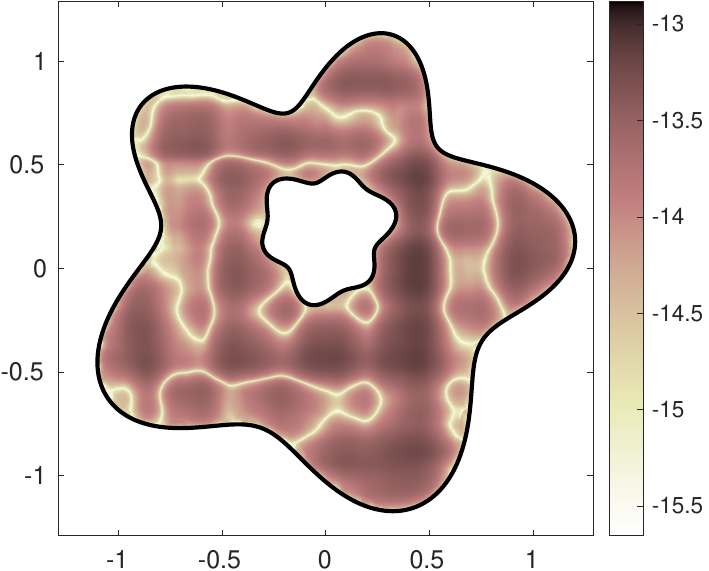}
\caption{\sf Left: solution to the Dirichlet Poisson problem in Example 3.
  Right: $\log_{10}$ of absolute error in the solution in $\Omega$. Here, the total number
  of uniform grid points is $2050^2$. The relative $l^2$ error is about $2\times 10^{-14}$
  on the grid.
}
\label{example3c}
\end{figure}

We set the spacing $h$ of the uniform grid to $25$ logarithmically
equally spaced points between $10^{-1.1}$ and $10^{-3}$. The number of points $N$
along each dimension varies from $128$ to $2556$, correspondingly. The convergence order and timing
results are shown in \cref{example3b}. We observe that the convergence order is
about $9.6$, and that the timing results are similar to the first two examples.
The solution to the Dirichlet Poisson problem and its numerical error 
are shown in \cref{example3c}.

Finally, we study the effect of function extension order on the accuracy of the solution.
Besides the first three examples, we also consider the case where
the boundary is a circle of radius $0.9$. The exact solution to the Dirichlet
Poisson problem is
\be
u_4(x) = T_{20}(x_1)+T_{20}(x_2),
\ee
the inhomogeneous term in the Poisson equation $f_4(x) = \Delta u_4$, and the Dirichlet data $g_4$
is simply the restriction of $u_4$ on the boundary. For each example we fix a grid spacing $h,$
and vary the extension order $K$ from $-1$ to $8$, where $K=-1$ denotes the
trivial extension by zero. \Cref{extensionorderstudy}
shows the dependence of accuracy of the solution on the function extension order for all
four examples. In all four cases, the accuracy increases
as the order of function extension increases.

\begin{figure}[!ht]
\centering 
\subfloat{{% This file was created by matlab2tikz.
%
%The latest updates can be retrieved from
%  http://www.mathworks.com/matlabcentral/fileexchange/22022-matlab2tikz-matlab2tikz
%where you can also make suggestions and rate matlab2tikz.
%
\definecolor{mycolor1}{rgb}{0.00000,0.44700,0.74100}%
\definecolor{mycolor2}{rgb}{0.85000,0.32500,0.09800}%
\definecolor{mycolor3}{rgb}{0.92900,0.69400,0.12500}%
\definecolor{mycolor4}{rgb}{0.49400,0.18400,0.55600}%
\definecolor{mycolor5}{rgb}{0.46600,0.67400,0.18800}%
\definecolor{mycolor6}{rgb}{0.00000,0.74902,0.74902}%
\begin{tikzpicture}[scale=0.7]

\begin{semilogyaxis}[%
xticklabel style={/pgf/number format/fixed},
%xmin=-1
xmax=9,
xlabel style={font=\color{white!15!black}},
xlabel={$K$},
xticklabel style = {font=\small},
xminorticks=false,
ymin=1e-11,
ymax=1e-3,
yminorticks=true,
ylabel style={font=\small\color{white!15!black}},
ylabel={Relative $\ell^{2}$ error},
yticklabel style = {font=\small},
axis background/.style={fill=white},
xmajorgrids,
%xminorgrids,
ymajorgrids,
%yminorgrids,
legend style={font=\normalsize,at={(1.0,0.195)}, anchor=north east, legend cell align=left, align=left, draw=white!15!black},
clip mode=individual,
]

\addplot [color=mycolor1, line width=1.0pt, mark=*, mark options={solid, mycolor1}, mark size=2.0pt]
  table[row sep=crcr]{%
-1 0.00030208341203691335\\ 
0 0.00002109237718523421\\ 
1 0.00000199819572202127\\ 
2 0.00000038874414479117\\ 
3 0.00000003829339749951\\ 
4 0.00000000701182043846\\ 
5 0.00000000067330888856\\ 
6 0.00000000015080151088\\ 
7 0.00000000002515854597\\ 
8 0.00000000004200165157\\ 
};

\end{semilogyaxis}
\end{tikzpicture}%

%%% Local Variables:
%%% mode: latex
%%% TeX-master: t
%%% End:}}%
\hspace{10mm}
\subfloat{{% This file was created by matlab2tikz.
%
%The latest updates can be retrieved from
%  http://www.mathworks.com/matlabcentral/fileexchange/22022-matlab2tikz-matlab2tikz
%where you can also make suggestions and rate matlab2tikz.
%
\definecolor{mycolor1}{rgb}{0.00000,0.44700,0.74100}%
\definecolor{mycolor2}{rgb}{0.85000,0.32500,0.09800}%
\definecolor{mycolor3}{rgb}{0.92900,0.69400,0.12500}%
\definecolor{mycolor4}{rgb}{0.49400,0.18400,0.55600}%
\definecolor{mycolor5}{rgb}{0.46600,0.67400,0.18800}%
\definecolor{mycolor6}{rgb}{0.00000,0.74902,0.74902}%
\begin{tikzpicture}[scale=0.7]

\begin{semilogyaxis}[%
xticklabel style={/pgf/number format/fixed},
%xmin=-1
xmax=9,
xlabel style={font=\color{white!15!black}},
xlabel={$K$},
xticklabel style = {font=\small},
xminorticks=false,
ymin=1e-12,
ymax=1e-6,
yminorticks=false,
ylabel style={font=\small\color{white!15!black}},
ylabel={Relative $\ell^{2}$ error},
yticklabel style = {font=\small},
axis background/.style={fill=white},
xmajorgrids,
%xminorgrids,
ymajorgrids,
%yminorgrids,
legend style={font=\normalsize,at={(1.0,0.195)}, anchor=north east, legend cell align=left, align=left, draw=white!15!black},
clip mode=individual,
]

\addplot [color=mycolor1, line width=1.0pt, mark=*, mark options={solid, mycolor1}, mark size=2.0pt]
  table[row sep=crcr]{%
-1 0.00000016295575055718\\ 
0 0.00000000081832826668\\ 
1 0.00000000005270590071\\ 
2 0.00000000001876782391\\ 
3 0.00000000001202209894\\ 
4 0.00000000000747448175\\ 
5 0.00000000000683335784\\ 
6 0.00000000000694582578\\ 
7 0.00000000000699883697\\ 
8 0.00000000000699779443\\ 
};

\end{semilogyaxis}
\end{tikzpicture}%

%%% Local Variables:
%%% mode: latex
%%% TeX-master: t
%%% End:}}%
\vspace{4mm}
\subfloat{{% This file was created by matlab2tikz.
%
%The latest updates can be retrieved from
%  http://www.mathworks.com/matlabcentral/fileexchange/22022-matlab2tikz-matlab2tikz
%where you can also make suggestions and rate matlab2tikz.
%
\definecolor{mycolor1}{rgb}{0.00000,0.44700,0.74100}%
\definecolor{mycolor2}{rgb}{0.85000,0.32500,0.09800}%
\definecolor{mycolor3}{rgb}{0.92900,0.69400,0.12500}%
\definecolor{mycolor4}{rgb}{0.49400,0.18400,0.55600}%
\definecolor{mycolor5}{rgb}{0.46600,0.67400,0.18800}%
\definecolor{mycolor6}{rgb}{0.00000,0.74902,0.74902}%
\begin{tikzpicture}[scale=0.7]

\begin{semilogyaxis}[%
xticklabel style={/pgf/number format/fixed},
%xmin=-1
xmax=9,
xlabel style={font=\color{white!15!black}},
xlabel={$K$},
xticklabel style = {font=\small},
xminorticks=false,
ymin=1e-14,
ymax=1e-5,
yminorticks=true,
ylabel style={font=\small\color{white!15!black}},
ylabel={Relative $\ell^{2}$ error},
yticklabel style = {font=\small},
axis background/.style={fill=white},
xmajorgrids,
%xminorgrids,
ymajorgrids,
%yminorgrids,
legend style={font=\normalsize,at={(1.0,0.195)}, anchor=north east, legend cell align=left, align=left, draw=white!15!black},
clip mode=individual,
]

\addplot [color=mycolor1, line width=1.0pt, mark=*, mark options={solid, mycolor1}, mark size=2.0pt]
  table[row sep=crcr]{%
-1 0.00000085165153501279\\ 
0 0.00000000939527756316\\ 
1 0.00000000013652434462\\ 
2 0.00000000000125071868\\ 
3 0.00000000000003266028\\ 
4 0.00000000000001962081\\ 
5 0.00000000000002002357\\ 
6 0.00000000000002001914\\ 
7 0.00000000000002005811\\ 
8 0.00000000000001986801\\ 
};

\end{semilogyaxis}
\end{tikzpicture}%

%%% Local Variables:
%%% mode: latex
%%% TeX-master: t
%%% End:}}%
\hspace{10mm}
\subfloat{{% This file was created by matlab2tikz.
%
%The latest updates can be retrieved from
%  http://www.mathworks.com/matlabcentral/fileexchange/22022-matlab2tikz-matlab2tikz
%where you can also make suggestions and rate matlab2tikz.
%
\definecolor{mycolor1}{rgb}{0.00000,0.44700,0.74100}%
\definecolor{mycolor2}{rgb}{0.85000,0.32500,0.09800}%
\definecolor{mycolor3}{rgb}{0.92900,0.69400,0.12500}%
\definecolor{mycolor4}{rgb}{0.49400,0.18400,0.55600}%
\definecolor{mycolor5}{rgb}{0.46600,0.67400,0.18800}%
\definecolor{mycolor6}{rgb}{0.00000,0.74902,0.74902}%
\begin{tikzpicture}[scale=0.7]

\begin{semilogyaxis}[%
xticklabel style={/pgf/number format/fixed},
%xmin=-1
xmax=9,
xlabel style={font=\color{white!15!black}},
xlabel={$K$},
xticklabel style = {font=\small},
xminorticks=false,
ymin=1e-18,
ymax=1e-7,
yminorticks=true,
ylabel style={font=\small\color{white!15!black}},
ylabel={Relative $\ell^{2}$ error},
yticklabel style = {font=\small},
axis background/.style={fill=white},
xmajorgrids,
%xminorgrids,
ymajorgrids,
%yminorgrids,
legend style={font=\normalsize,at={(1.0,0.195)}, anchor=north east, legend cell align=left, align=left, draw=white!15!black},
clip mode=individual,
]

\addplot [color=mycolor1, line width=1.0pt, mark=*, mark options={solid, mycolor1}, mark size=2.0pt]
  table[row sep=crcr]{%
-1 0.00000000828929362635\\ 
0 0.00000000060830930515\\ 
1 0.00000000004019686342\\ 
2 0.00000000000919761899\\ 
3 0.00000000000020143128\\ 
4 0.00000000000009780492\\ 
5 0.00000000000000810587\\ 
6 0.00000000000000124456\\ 
7 0.00000000000000003592\\ 
8 0.00000000000000000989\\ 
};

\end{semilogyaxis}
\end{tikzpicture}%

%%% Local Variables:
%%% mode: latex
%%% TeX-master: t
%%% End:}}%

\caption{\sf The dependence of accuracy of the solution on the function extension order.
  The $x$-axis is the order of function extension, where $K=-1$ denotes the
  trivial extension by zero. 
  The $y$-axis shows the relative $l^2$
  error of the numerical solution. Top left: the results for Example 1, where
  the numerical solution is calculated using a $334\times 334$ uniform grid with $h\approx 0.0076$.
  Top right: the results for Example 2, where the numerical solution
  is calculated using a $406\times 406$ uniform grid with $h\approx 0.0019$.
  Bottom left: the results for Example 3, where the numerical solution
  is calculated using a $784\times 784$ uniform grid with $h\approx 0.0036$.
  Bottom right: the results for Example 4, where the numerical solution
  is calculated using a $402\times 402$ uniform grid with $h\approx 0.0059$.
}
\label{extensionorderstudy}
\end{figure}
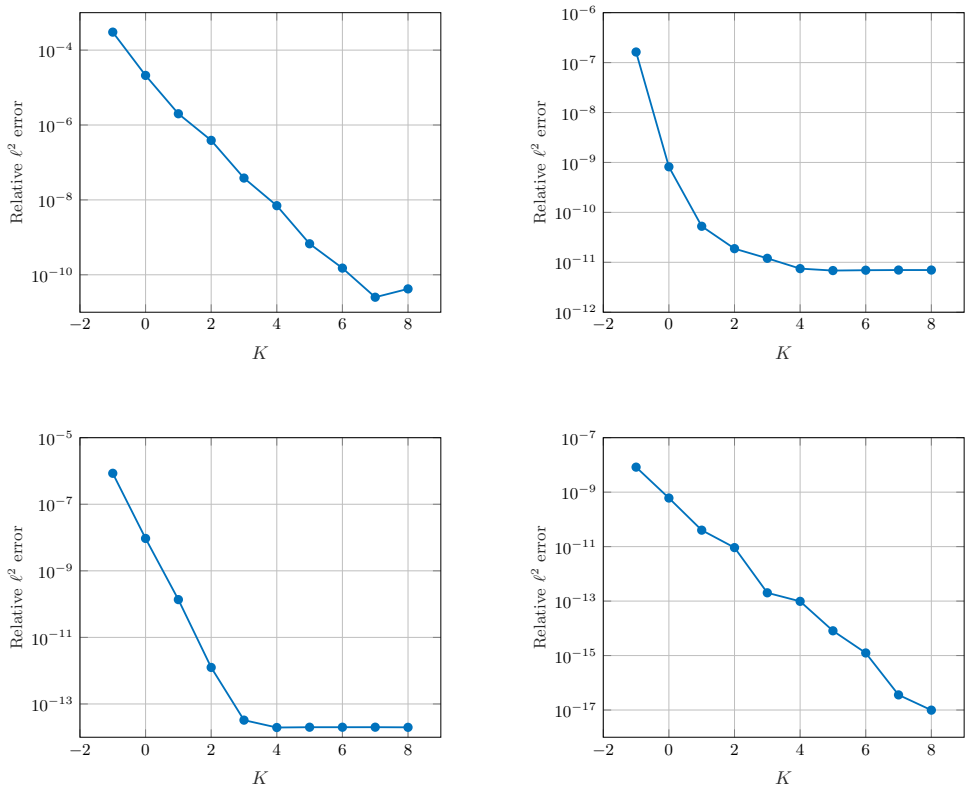

\subsection{Three dimensions -- FFT-based BVP solvers}\label{3dexamples}
We consider two examples in three dimensions to solve the inhomogeneous Dirichlet problem
for the Poisson equation using an FFT-based BVP solver. As in the two-dimensional case
presented in the previous subsection, function extension is combined with the FFT
to generate a particular solution $u^{p}$. We present timings and convergence plots for
the errors with respect to the grid resolution $h$ of the underlying uniform grid.
Again, we set the function extension order to $8$. For the first example we set the
parameter $a=1$ in the formulae \cref{optnodes}--\cref{optweights} for computing
extension nodes and weights, while in the second example we set $a=0.15$. The parameters
in the window function \cref{windowfunc} are set to $c=40.590000152587891$ for $16$ digits
of accuracy, and $r_0=10^{-6}$, $r_1=32h$. The error is measured at the subset of
$10000$ random points that are inside the given domain, $\Omega$. These random points are
drawn from a uniform distribution over a box with sides such that it just
contains $\Omega$. For the first example the box is $[-0.25,0.25]^{3}$,
and for the second example the box is $[-0.19,0.19]\times[-0.18,0.18]\times[-0.062,0,62]$.

In the first example the inhomogeneous term in the Poisson equation is given by
\begin{equation}
  \label{eq:3drhs1}
  f_{4}(x) = -\sin(8\pi x_{1})\sin(8\pi x_{2})\sin(8\pi x_{3}), \quad x\in\Omega_{4},
\end{equation}
with boundary data
\begin{equation}
  \label{eq:3dbc1}
  g_{4}(x) = \sin(8\pi x_{1})\sin(8\pi x_{2})\sin(8\pi x_{3})/(1536\pi^{3}), \quad x\in\Gamma_{4}.
\end{equation}
Here, $\Omega_{4}$ is the ball with radius $0.25$. The exact solution $u_{4}(x)$ is given by $g_{4}(x)$ for $x\in\Omega_{4}$.

The grid spacing $h$ of the uniform grid ranges over $23$ logarithmically equally
distributed points between $10^{-2}$ and $10^{-2.75}$, corresponding to $N = 120$ and $N=358$, respectively.
In \cref{fig:3dexample4convtimings} we observe the expected $10$th order convergence in the error
as a function of the grid resolution $h$. The error saturates $10^{-9}$ for values of $h$ less
than $10^{-2.6}$, which is probably due to package used for solving the homogeneous
Laplace equation \cite{GREENGARD2021100092}. An extension $f^{e}_{4}$ and the Dirichlet data
are shown in \cref{fig:3dexample4fesol}.

In \cref{fig:3dexample4convtimings} timings are presented. For the three-dimensional setting
there are two dominant steps: solving the Laplace equation and sorting points in the
uniform grid. As is clear from the plots, function extension and the FFT require relatively little work.

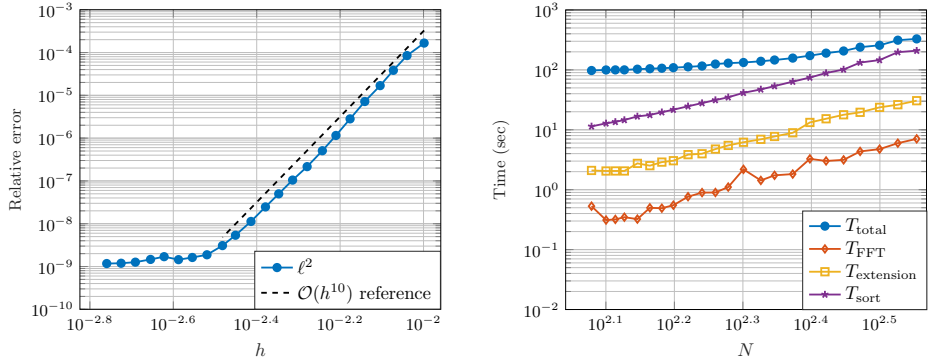
\begin{figure}[!ht]
\centering 
\subfloat{{% This file was created by matlab2tikz.
%
%The latest updates can be retrieved from
%  http://www.mathworks.com/matlabcentral/fileexchange/22022-matlab2tikz-matlab2tikz
%where you can also make suggestions and rate matlab2tikz.
%
\definecolor{mycolor1}{rgb}{0.00000,0.44700,0.74100}%
\definecolor{mycolor2}{rgb}{0.85000,0.32500,0.09800}%
\definecolor{mycolor3}{rgb}{0.92900,0.69400,0.12500}%
\definecolor{mycolor4}{rgb}{0.49400,0.18400,0.55600}%
\definecolor{mycolor5}{rgb}{0.46600,0.67400,0.18800}%
\definecolor{mycolor6}{rgb}{0.00000,0.74902,0.74902}%
\begin{tikzpicture}[scale=0.7]

\begin{loglogaxis}[%
xticklabel style={/pgf/number format/fixed},
xmin=0.0015,
xmax=0.011,
xlabel style={font=\color{white!15!black}},
xlabel={$h$},
xticklabel style = {font=\small},
xminorticks=false,
ymin=1e-10,
ymax=1e-3,
yminorticks=true,
ylabel style={font=\small\color{white!15!black}},
ylabel={Relative error},
yticklabel style = {font=\small},
axis background/.style={fill=white},
xmajorgrids,
xminorgrids,
ymajorgrids,
yminorgrids,
legend style={font=\normalsize,at={(1.0,0.195)}, anchor=north east, legend cell align=left, align=left, draw=white!15!black},
clip mode=individual,
]

\addplot [color=mycolor1, line width=1.0pt, mark=*, mark options={solid, mycolor1}, mark size=2.0pt]
  table[row sep=crcr]{%
   0.010033333333333   0.000166496045466\\
   0.009129081614761   0.000084993868957\\
   0.008466385908475   0.000038390617185\\
   0.007871526733009   0.000016862323170\\
   0.007231702504403   0.000007171470016\\
   0.006666605277874   0.000002818228736\\
   0.006165762678235   0.000001153811474\\
   0.005720412257820   0.000000508149146\\
   0.005259045242183   0.000000215633513\\
   0.004853624279660   0.000000104713940\\
   0.004495785150592   0.000000049931264\\
   0.004178644490178   0.000000024706849\\
   0.003857530388250   0.000000011293368\\
   0.003541425714502   0.000000005379840\\
   0.003295676834175   0.000000003082599\\
   0.003025251503876   0.000000001880654\\
   0.002790512083030   0.000000001625426\\
   0.002585391073247   0.000000001446594\\
   0.002387892407311   0.000000001698568\\
   0.002215337928687   0.000000001464651\\
   0.002037507987776   0.000000001263322\\
   0.001883548762733   0.000000001192489\\
   0.001739470248835   0.000000001173061\\
};
\addlegendentry{$\ell^{2}$}

\addplot [color=black, dashed, line width=1.0pt, mark options={solid, black}]
  table[row sep=crcr]{%
   0.010033333333333   0.000327838813674\\
   0.009129081614761   0.000127490803436\\
   0.008466385908475   0.000060005155223\\
   0.007871526733009   0.000028959943352\\
   0.007231702504403   0.000012405477357\\
   0.006666605277874   0.000005498641162\\
   0.006165762678235   0.000002518117591\\
   0.005720412257820   0.000001189819950\\
   0.005259045242183   0.000000513188635\\
   0.004853624279660   0.000000230075039\\
   0.004495785150592   0.000000106970357\\
   0.004178644490178   0.000000051470952\\
   0.003857530388250   0.000000023136706\\
   0.003541425714502   0.000000009839869\\
   0.003295676834175   0.000000004793516\\
};
\addlegendentry{$\mathcal{O}(h^{10})$ reference}

\end{loglogaxis}
\end{tikzpicture}%

%%% Local Variables:
%%% mode: latex
%%% TeX-master: t
%%% End:}}%
\hspace{4mm}
\subfloat{{% This file was created by matlab2tikz.
%
%The latest updates can be retrieved from
%  http://www.mathworks.com/matlabcentral/fileexchange/22022-matlab2tikz-matlab2tikz
%where you can also make suggestions and rate matlab2tikz.
%
\definecolor{mycolor1}{rgb}{0.00000,0.44700,0.74100}%
\definecolor{mycolor2}{rgb}{0.85000,0.32500,0.09800}%
\definecolor{mycolor3}{rgb}{0.92900,0.69400,0.12500}%
\definecolor{mycolor4}{rgb}{0.49400,0.18400,0.55600}%
\definecolor{mycolor5}{rgb}{0.46600,0.67400,0.18800}%
\definecolor{mycolor6}{rgb}{0.00000,0.74902,0.74902}%
\begin{tikzpicture}[scale=0.7]

\begin{loglogaxis}[%
xticklabel style={/pgf/number format/fixed},
xmin=110,
xmax=370,
xlabel style={font=\color{white!15!black}},
xlabel={$N$},
xticklabel style = {font=\small},
xminorticks=false,
ymin=1e-2,
ymax=1e3,
yminorticks=true,
ylabel style={font=\small\color{white!15!black}},
ylabel={Time (sec)},
yticklabel style = {font=\small},
axis background/.style={fill=white},
xmajorgrids,
xminorgrids,
ymajorgrids,
yminorgrids,
legend style={font=\normalsize,at={(1.002,0.33)}, anchor=north east, legend cell align=left, align=left, draw=white!15!black},
clip mode=individual,
]

\addplot [color=mycolor1, line width=1.0pt, mark=*, mark options={solid, mycolor1}, mark size=2.0pt]
  table[row sep=crcr]{%
120 97.502541\\ 
126 99.611096\\ 
130 99.880003\\ 
134 99.918898\\ 
140 102.632604\\ 
146 104.516630\\ 
152 106.469955\\ 
158 108.741874\\ 
166 112.280464\\ 
174 115.954080\\ 
182 124.962273\\ 
190 129.384871\\ 
200 132.657644\\ 
212 139.070138\\ 
222 146.023395\\ 
236 157.193427\\ 
250 172.851646\\ 
264 190.199742\\ 
280 206.548223\\ 
296 239.249006\\ 
316 257.207608\\ 
336 312.700081\\ 
358 328.860173\\ 
};
\addlegendentry{$T_{\rm total}$}

\addplot [color=mycolor2, line width=1.0pt, mark=diamond, mark options={solid, mycolor2}, mark size=2.0pt]
  table[row sep=crcr]{%
120 0.530833\\ 
126 0.311074\\ 
130 0.319286\\ 
134 0.347307\\ 
140 0.322813\\ 
146 0.498237\\ 
152 0.490606\\ 
158 0.550588\\ 
166 0.764516\\ 
174 0.893820\\ 
182 0.897672\\ 
190 1.106392\\ 
200 2.184867\\ 
212 1.426934\\ 
222 1.736638\\ 
236 1.822811\\ 
250 3.270351\\ 
264 3.016378\\ 
280 3.145589\\ 
296 4.350317\\ 
316 4.752137\\ 
336 5.973768\\ 
358 7.053649\\ 
};
\addlegendentry{$T_{\rm FFT}$}

\addplot [color=mycolor3, line width=1.0pt, mark=square, mark options={solid, mycolor3}, mark size=2.0pt]
  table[row sep=crcr]{%
120 2.094437\\ 
126 2.051101\\ 
130 2.053938\\ 
134 2.052798\\ 
140 2.747790\\ 
146 2.517480\\ 
152 2.886046\\ 
158 3.045066\\ 
166 3.845621\\ 
174 3.983258\\ 
182 4.762392\\ 
190 5.480643\\ 
200 6.190172\\ 
212 6.927492\\ 
222 7.732064\\ 
236 8.904037\\ 
250 13.252774\\ 
264 15.269597\\ 
280 17.802219\\ 
296 19.566861\\ 
316 23.669149\\ 
336 26.204722\\ 
358 30.574936\\
};
\addlegendentry{$T_{\rm extension}$}

\addplot [color=mycolor4, line width=1.0pt, mark=star, mark options={solid, mycolor4}, mark size=2.0pt]
  table[row sep=crcr]{%
120 11.389299\\ 
126 12.793415\\ 
130 13.542138\\ 
134 14.555549\\ 
140 16.718072\\ 
146 17.664844\\ 
152 19.633690\\ 
158 21.622125\\ 
166 24.681743\\ 
174 27.872978\\ 
182 31.330231\\ 
190 34.842309\\ 
200 41.381263\\ 
212 47.140944\\ 
222 53.576720\\ 
236 63.586855\\ 
250 74.423585\\ 
264 88.569126\\ 
280 101.334956\\ 
296 132.336051\\ 
316 145.752560\\ 
336 197.405767\\ 
358 208.874156\\ 
};
\addlegendentry{$T_{\rm sort}$}

\end{loglogaxis}
\end{tikzpicture}%

%%% Local Variables:
%%% mode: latex
%%% TeX-master: t
%%% End:}}%
\caption{\sf Left: convergence order study for Example 4. Data points are shown in small
  circles, while the dashed line is a reference line corresponding to $\mathcal{O}(h^{10})$.
  Right: timing results for Example 4, where the x-axis is the number of points along each
  dimension. Thus, the total number of equispaced grid points in the volume is $N^3$.
}
\label{fig:3dexample4convtimings}
\end{figure}

\begin{figure}[!ht]
\centering 
\includegraphics[trim={0cm 5cm 1cm 5.5cm},clip,height=50mm]{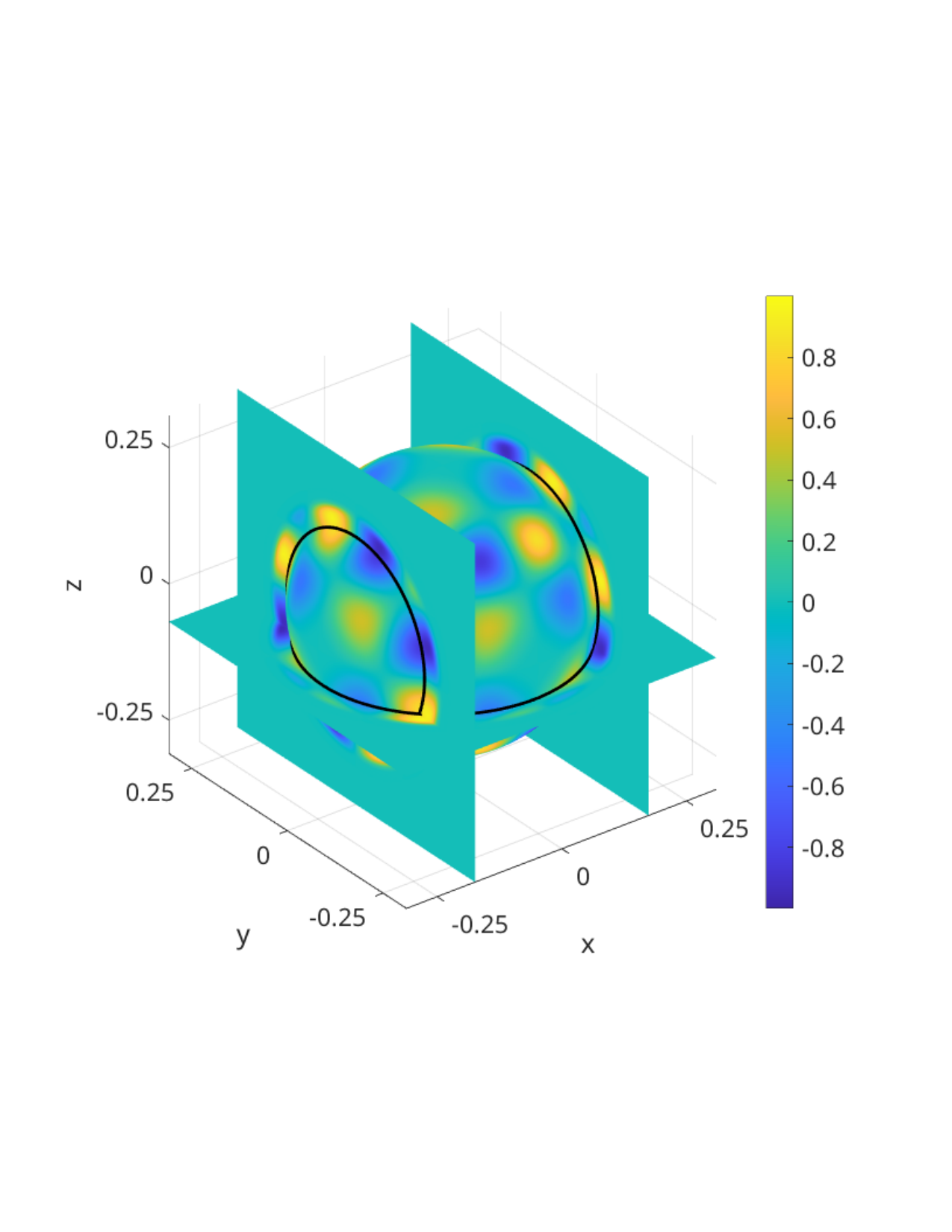}
\hspace{4mm}
\includegraphics[trim={0cm 5cm 1cm 5.5cm},clip,height=50mm]{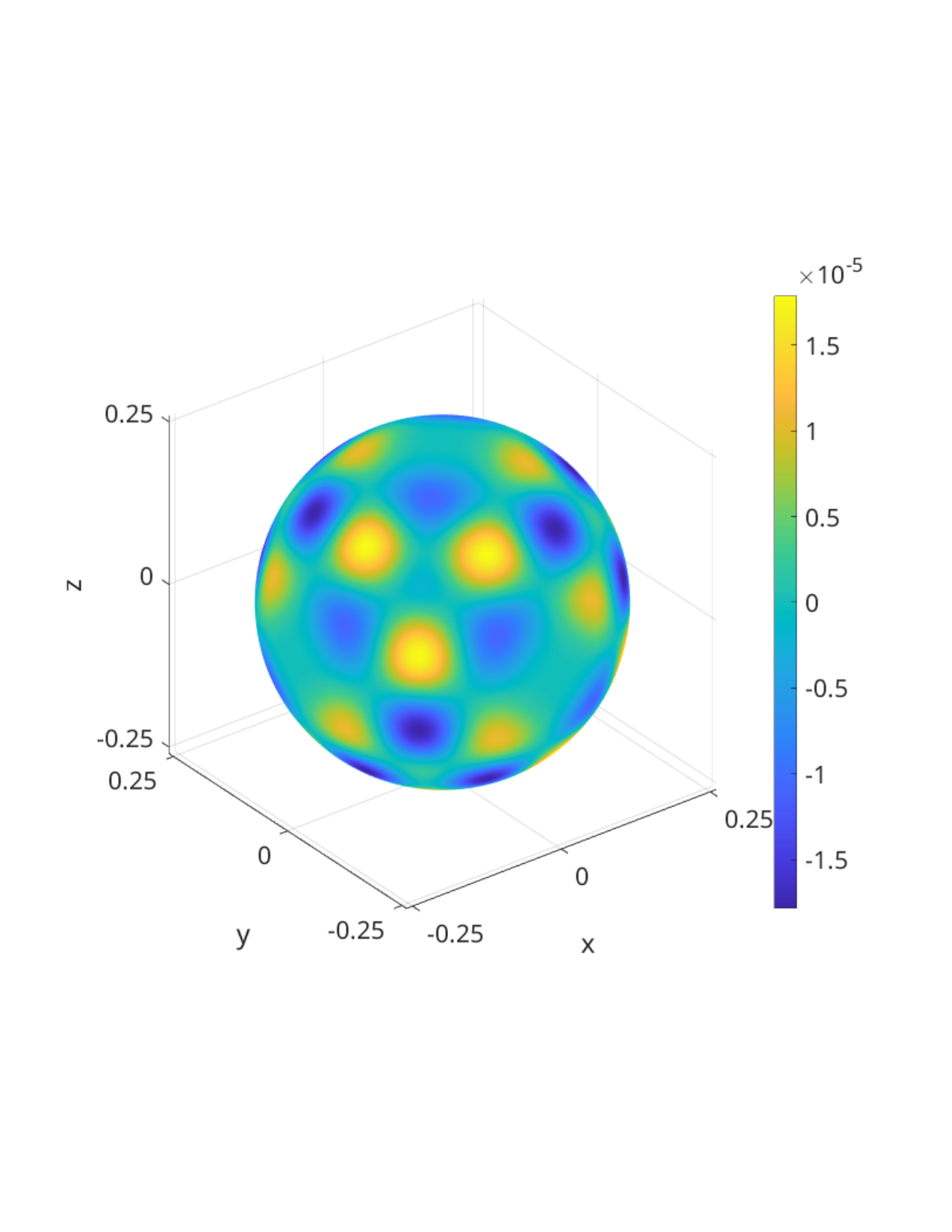}
\caption{\sf Left: the surface $\Gamma_{4}$, the right-hand side $f_{4}$ on the surface,
  and slices of its 8th order normal extension $f_{4}^{e}$ with $h = 10^{-2.75}$.
  Right: the Dirichlet boundary data $g_{4}$ on the surface $\Gamma_{4}$.
}
\label{fig:3dexample4fesol}
\end{figure}

For the second example we consider a stellarator-like geometry, given by a
surface $\Gamma_{5}$  parameterized by $\mathbf{X}:[0,2\pi]^{2}\rightarrow\Gamma_{5}$ with
\begin{equation}
  \mathbf{X}(u,v) = \sum\limits_{i=-1}^{2}\sum\limits_{j=-1}^{1}\delta_{i,j}
  \begin{bmatrix}\cos(v)\cos((1-i)u+jv)\\\sin(v)\cos((1-i)u+jv)\\\sin((1-i)u+jv)\end{bmatrix}.
\end{equation}
The non-zero coefficients are $\delta_{-1,-1}=0.17$, $\delta_{-1,0}=0.11$, $\delta_{0,0}=1$,
$\delta_{1,0}=4.5$, $\delta_{2,0}=-0.25$, $\delta_{0.1}=0.07$, and $\delta_{2,1}=-0.45$.
We let $\Omega_5$ denote the solid torus bounded by $\Gamma_5,$ see \cref{fig:3dexample2fesol}.

We use the following inhomogeneous term for the Poisson equation,
\begin{align}
  \begin{split}
  \label{eq:3drhs2}
  f_{5}(x) &= -300\sin(10(x_{1} + x_{2} + x_{3})) + 2 + 6x_{3} \\
  &+ 1000e^{-500x^{2}}(500x_{1}^{2} -1)+ 1000e^{-500x_{3}^{2}}(500x_{3}^{2}-1)
    , \quad x\in\Omega_{5},
    \end{split}
\end{align}
with boundary data
\begin{equation}
  \label{eq:3dbc2}
  g_{5}(x) = \sin(10(x_{1} + x_{2} + x_{3})) + x_{1}^{2} - 3x_{2} + x_{3}^{3} + e^{-500x_{1}^{2}}+e^{-500x_{3}^{2}},\quad x\in\Gamma_{5}.
\end{equation}
Again, the exact solution $u_{5}(x)$  is given by $g_{5}(x)$ for $x\in\Omega_{5}$.
The plots in \cref{fig:3dexample2fesol} show the stellarator-like geometry,
an extension $f_{5}^{e}$ and the Dirichlet boundary data $g_{5}$.

For this example, the grid spacing $h$ of the uniform grid ranges over $22$ logarithmically
equally distributed points between $10^{-2.2}$ to $10^{-2.9}$, which corresponds to $N=126$ and $N=364$,
respectively. The error as a function of $h$ is shown in \cref{fig:3dexample2convtimings}. As for
the previous example, we observe the expected 10th order convergence; the error is saturated
for values of $h$ less than $10^{-2.62}$. As remarked above, this saturation is probably due
to limitations in the solver for the homogeneous Laplace equation.

Due to the presence of very narrow regions in  $\Omega_{5}$, we have to set $a=0.15$, or smaller,
to ensure that the points $y-t_{j}x\nu_{y}$ (see formula \cref{extformula}) for each boundary
point $y$ all lie inside $\Omega_{5}$. Note that $a=0.15$ gives $\|w\|_{1}\sim \mathcal{O}(10^{11})$,
however it is clear from the convergence plot in \cref{fig:3dexample2convtimings} that
we nonetheless get the same accuracy as in the previous example, which uses the sphere with $a=1$.

Concerning the timings, we observe the same trends as for the previous example
in three dimensions: solving the Laplace equation and sorting the points are the
two most time consuming tasks.
\begin{figure}[!ht]
\centering 
\subfloat{{% This file was created by matlab2tikz.
%
%The latest updates can be retrieved from
%  http://www.mathworks.com/matlabcentral/fileexchange/22022-matlab2tikz-matlab2tikz
%where you can also make suggestions and rate matlab2tikz.
%
\definecolor{mycolor1}{rgb}{0.00000,0.44700,0.74100}%
\definecolor{mycolor2}{rgb}{0.85000,0.32500,0.09800}%
\definecolor{mycolor3}{rgb}{0.92900,0.69400,0.12500}%
\definecolor{mycolor4}{rgb}{0.49400,0.18400,0.55600}%
\definecolor{mycolor5}{rgb}{0.46600,0.67400,0.18800}%
\definecolor{mycolor6}{rgb}{0.00000,0.74902,0.74902}%
\begin{tikzpicture}[scale=0.7]

\begin{loglogaxis}[%
xticklabel style={/pgf/number format/fixed},
xmin=0.0012,
xmax=0.007,
xlabel style={font=\color{white!15!black}},
xlabel={$h$},
xticklabel style = {font=\small},
xminorticks=false,
ymin=1e-10,
ymax=1e-3,
yminorticks=true,
ylabel style={font=\small\color{white!15!black}},
ylabel={Relative error},
yticklabel style = {font=\small},
axis background/.style={fill=white},
xmajorgrids,
xminorgrids,
ymajorgrids,
yminorgrids,
legend style={font=\normalsize,at={(0.515,1.001)}, anchor=north east, legend cell align=left, align=left, draw=white!15!black},
clip mode=individual,
]

\addplot [color=mycolor1, line width=1.0pt, mark=*, mark options={solid, mycolor1}, mark size=2.0pt]
  table[row sep=crcr]{%
   0.006703618351288   0.000066225887747\\
   0.006219442067573   0.000030332765925\\
   0.005699681459032   0.000008828817098\\
   0.005316664656394   0.000001897101761\\
   0.004903165522970   0.000000493363988\\
   0.004477691162991   0.000000206980989\\
   0.004157962467667   0.000000090257636\\
   0.003826414400849   0.000000036106906\\
   0.003534737461178   0.000000017116927\\
   0.003277020646085   0.000000007777758\\
   0.003016963384575   0.000000004709844\\
   0.002789017462991   0.000000003866927\\
   0.002588199930047   0.000000003601918\\
   0.002389121308584   0.000000003603761\\
   0.002214586422622   0.000000003520174\\
   0.002044381213611   0.000000004155074\\
   0.001881357932728   0.000000004167294\\
   0.001739654544985   0.000000004435131\\
   0.001604946981566   0.000000004723448\\
   0.001487628531395   0.000000004977825\\
   0.001368563558254   0.000000005381951\\
};
\addlegendentry{$\ell^{2}$}

\addplot [color=black, dashed, line width=1.0pt, mark options={solid, black}]
  table[row sep=crcr]{%
   0.005720412257820   0.000001189819950\\
   0.005259045242183   0.000000513188635\\
   0.004853624279660   0.000000230075039\\
   0.004495785150592   0.000000106970357\\
   0.004178644490178   0.000000051470952\\
   0.003857530388250   0.000000023136706\\
   0.003541425714502   0.000000009839869\\
   0.003295676834175   0.000000004793516\\
};
\addlegendentry{$\mathcal{O}(h^{10})$ reference}

\end{loglogaxis}
\end{tikzpicture}%

%%% Local Variables:
%%% mode: latex
%%% TeX-master: t
%%% End:}}%
\hspace{4mm}
\subfloat{{% This file was created by matlab2tikz.
%
%The latest updates can be retrieved from
%  http://www.mathworks.com/matlabcentral/fileexchange/22022-matlab2tikz-matlab2tikz
%where you can also make suggestions and rate matlab2tikz.
%
\definecolor{mycolor1}{rgb}{0.00000,0.44700,0.74100}%
\definecolor{mycolor2}{rgb}{0.85000,0.32500,0.09800}%
\definecolor{mycolor3}{rgb}{0.92900,0.69400,0.12500}%
\definecolor{mycolor4}{rgb}{0.49400,0.18400,0.55600}%
\definecolor{mycolor5}{rgb}{0.46600,0.67400,0.18800}%
\definecolor{mycolor6}{rgb}{0.00000,0.74902,0.74902}%
\begin{tikzpicture}[scale=0.7]

\begin{loglogaxis}[%
xticklabel style={/pgf/number format/fixed},
xmin=110,
xmax=370,
xlabel style={font=\color{white!15!black}},
xlabel={$N$},
xticklabel style = {font=\small},
xminorticks=false,
ymin=1e-2,
ymax=1e3,
yminorticks=true,
ylabel style={font=\small\color{white!15!black}},
ylabel={Time (sec)},
yticklabel style = {font=\small},
axis background/.style={fill=white},
xmajorgrids,
xminorgrids,
ymajorgrids,
yminorgrids,
legend style={font=\normalsize,at={(1.002,0.33)}, anchor=north east, legend cell align=left, align=left, draw=white!15!black},
clip mode=individual,
]

\addplot [color=mycolor1, line width=1.0pt, mark=*, mark options={solid, mycolor1}, mark size=2.0pt]
  table[row sep=crcr]{%
126 89.665939\\ 
130 92.762465\\ 
136 94.403573\\ 
140 95.590694\\ 
146 96.649697\\ 
154 100.558252\\ 
160 101.085612\\ 
168 105.622958\\ 
176 110.326599\\ 
184 111.877881\\ 
194 119.108153\\ 
204 122.598084\\ 
214 129.420581\\ 
226 136.735181\\ 
238 141.881145\\ 
252 155.535792\\ 
268 167.129662\\ 
284 180.336998\\ 
302 203.902445\\ 
320 222.943451\\ 
342 249.375298\\ 
};
\addlegendentry{$T_{\rm total}$}

\addplot [color=mycolor2, line width=1.0pt, mark=diamond, mark options={solid, mycolor2}, mark size=2.0pt]
  table[row sep=crcr]{%
126 0.272788\\ 
130 0.284386\\ 
136 0.405909\\ 
140 0.368241\\ 
146 0.485085\\ 
154 0.509074\\ 
160 0.636283\\ 
168 0.708914\\ 
176 1.016748\\ 
184 0.955126\\ 
194 2.151137\\ 
204 1.340866\\ 
214 1.629706\\ 
226 1.909775\\ 
238 1.494953\\ 
252 2.415421\\ 
268 2.590744\\ 
284 3.480662\\ 
302 4.271853\\ 
320 4.709987\\ 
342 5.926126\\ 
};
\addlegendentry{$T_{\rm FFT}$}

\addplot [color=mycolor3, line width=1.0pt, mark=square, mark options={solid, mycolor3}, mark size=2.0pt]
  table[row sep=crcr]{%
126 2.356729\\ 
130 2.853321\\ 
136 2.855850\\ 
140 3.434882\\ 
146 3.429810\\ 
154 4.731237\\ 
160 4.433298\\ 
168 5.442161\\ 
176 5.830059\\ 
184 6.254142\\ 
194 7.387302\\ 
204 8.431283\\ 
214 9.836101\\ 
226 10.511290\\ 
238 11.431320\\ 
252 13.590632\\ 
268 16.200780\\ 
284 16.839493\\ 
302 21.776690\\ 
320 25.029531\\ 
342 28.153079\\ 
};
\addlegendentry{$T_{\rm extension}$}

\addplot [color=mycolor4, line width=1.0pt, mark=star, mark options={solid, mycolor4}, mark size=2.0pt]
  table[row sep=crcr]{%
126 7.880032\\ 
130 8.459279\\ 
136 9.395193\\ 
140 10.424591\\ 
146 11.769201\\ 
154 13.705887\\ 
160 14.596205\\ 
168 18.081616\\ 
176 20.318485\\ 
184 21.639825\\ 
194 25.584882\\ 
204 29.808292\\ 
214 34.567248\\ 
226 39.428883\\ 
238 45.870803\\ 
252 55.317842\\ 
268 67.090987\\ 
284 76.146730\\ 
302 93.971464\\ 
320 108.746587\\ 
342 132.381813\\ 
};
\addlegendentry{$T_{\rm sort}$}

\end{loglogaxis}
\end{tikzpicture}%

%%% Local Variables:
%%% mode: latex
%%% TeX-master: t
%%% End:}}%
\caption{\sf Left: convergence order study for Example 5. Data points are shown in small
  circles, while the dashed line is a reference line corresponding to $\mathcal{O}(h^{10})$.
  Right: timing results for Example 5.
}
\label{fig:3dexample2convtimings}
\end{figure}
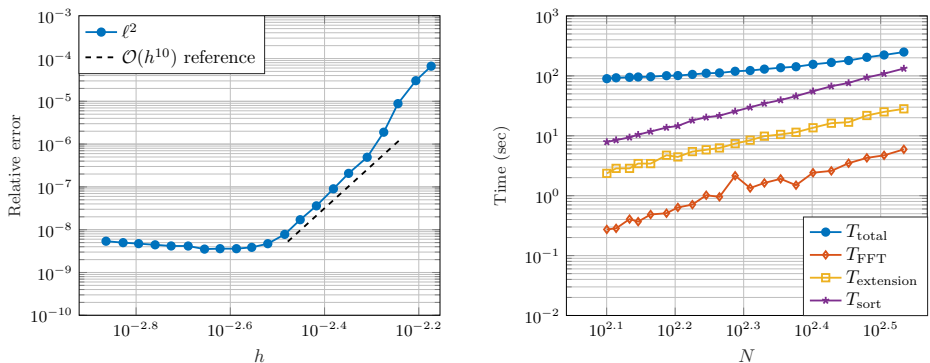

\begin{figure}[!ht]
\centering
\includegraphics[trim={0cm 7cm 1cm 6cm},clip,height=42mm]{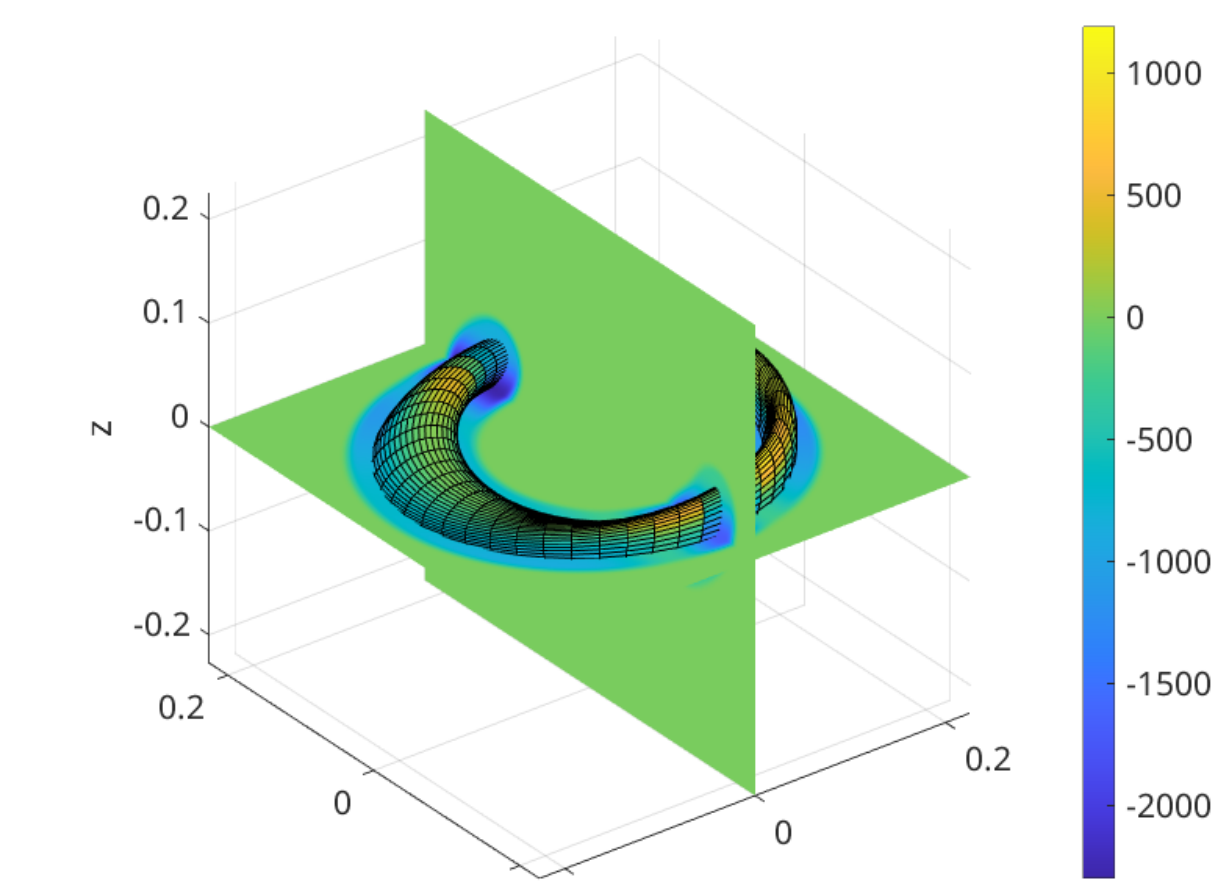}
\hspace{4mm}
\includegraphics[trim={0cm 8cm 1cm 6cm},clip,height=42mm]{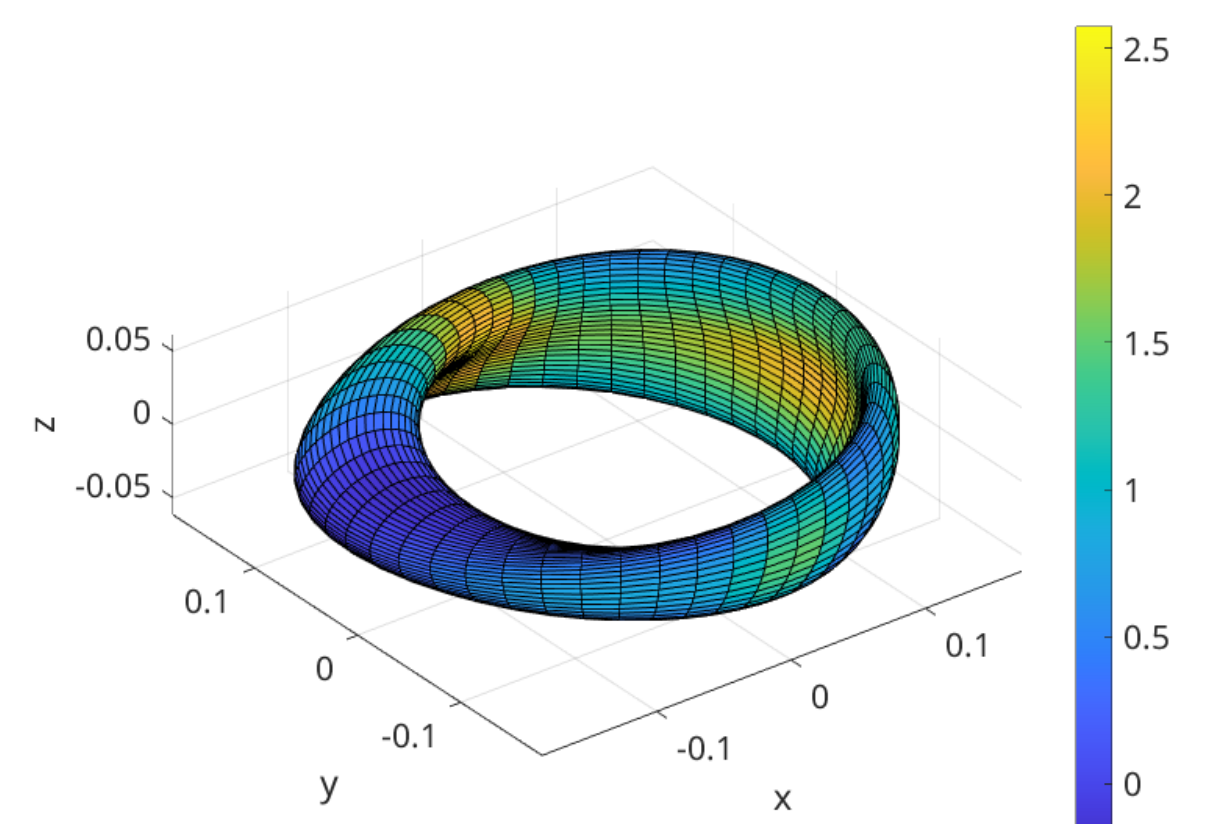}
\caption{\sf Left: the surface $\Gamma_{5}$, the right-hand side $f_{5}$ on the surface,
  and slices of its 8th order normal extension $f_{5}^{e}$ with $h = 10^{-2.9}$.
  Right: the Dirichlet boundary data $g_{5}$ on the surface $\Gamma_{5}$.
}
\label{fig:3dexample2fesol}
\end{figure}

\section{Conclusions and further discussions}
We have constructed a function extension scheme along normal directions. The extension
formula is given by a linear combination of function values in the specified domain,
with explicit expressions for nodes and weights that can be pre-computed. The nodes are
Chebyshev nodes scaled by the distance from the extension point to the boundary
in the normal direction. Each point in the $p$th order extension necessitates the evaluation of
the $p+1$ function values in the original domain. Moreover, the smoothness order of
the extended part along tangential directions remains consistent with the original
function. We have incorporated this new function extension scheme into an FFT-based
elliptic BVP solver on a uniform grid. The resulting scheme is both accurate and efficient.

This scheme can also be integrated with adaptive volume grids, such as the box FMM
(fast multipole method). Consequently, the modified scheme can tackle elliptic BVPs
with highly nonuniform data in complex geometries. In practical applications, we have
observed that the function extension scheme remains robust; the extended function
retains high accuracy even when the extension direction slightly deviates from the
normal direction. It is feasible to extend this new scheme along coordinate lines,
potentially resulting in more efficient function extension schemes when the function
in the original domain is provided, for instance, via polynomial interpolation on each element.
These issues are currently under investigation.

\section*{Acknowledgments}
The authors would like to thank Travis Askham at New Jersey Institute of Technology
for helpful discussions. The second author gratefully acknowledges the
support from the Knut and Alice Wallenberg Foundation under grant 2020.0258.

\bibliographystyle{siam}
\bibliography{journalnames,fmm}
\medskip

\hfill\today
\end{document}